\newcommand{\swoosh}{\includegraphics[width=0.35in]{swoosh.pdf}}
\renewcommand{\tilde}{\widetilde}
\newcommand{\R}{\mathbb{R}}
\newcommand{\C}{\mathbb{C}}
\newcommand{\N}{\mathbb{N}}
\newcommand{\Z}{\mathbb{Z}}
\newcommand{\Hil}{\mathcal{H}}
\newcommand{\Kil}{\mathcal{K}}
\newcommand{\eps}{\varepsilon}
\newcommand{\la}{\lambda}
\newcommand{\ve}{\varepsilon}
\DeclareMathOperator{\lspan}{span}
\providecommand{\norm}[1]{\lVert#1\rVert}
\newcounter{Theorem}
\numberwithin{equation}{section}
\numberwithin{Theorem}{section}
\theoremstyle{plain} 
\newtheorem{thm}[Theorem]{Theorem}
\newtheorem{cor}[Theorem]{Corollary}
\newtheorem{lem}[Theorem]{Lemma}
\theoremstyle{definition}
\newtheorem{defn}[Theorem]{Definition}
\theoremstyle{remark}
\newtheorem{remark}{Remark}[section]
\newtheorem{ex}[Theorem]{Example}
\begin{document}

\title{The Schur-Horn Theorem for Operators with Finite Spectrum}

\author{Marcin Bownik}

\address{Department of Mathematics, University of Oregon, Eugene, OR 97403--1222, USA}

\email{mbownik@uoregon.edu}

\author{John Jasper}

\address{Department of Mathematics, University of Missouri, Columbia, MO 65211--4100, USA}

\email{jasperj@missouri.edu}

\keywords{diagonals of self-adjoint operators, the Schur-Horn theorem, the Pythagorean theorem, the Carpenter theorem}

\thanks{
This work was partially supported by a grant from the Simons Foundation (\#244422 to Marcin Bownik).
The second author was supported by NSF ATD 1042701}

\subjclass[2000]{Primary: 42C15, 47B15, Secondary: 46C05}
\date{\today}

\begin{abstract}
 We characterize the set of diagonals of the unitary orbit of a self-adjoint operator with a finite spectrum. Our result extends the Schur-Horn theorem from a finite dimensional setting to an infinite dimensional Hilbert space analogous to Kadison's theorem for orthogonal projections \cite{k1,k2} and the second author's result for operators with three point spectrum \cite{jj}. 
\end{abstract}

\maketitle

\section{Introduction}


The classical Schur-Horn theorem \cite{horn, schur} characterizes diagonals of self-adjoint (Hermitian) matrices with given eigenvalues. It can be stated as follows, where $\mathcal H_N$ is an $N$ dimensional Hilbert space over $\R$ or $\C$, i.e., $\mathcal H_N=\R^N$ or $\C^N$.

\begin{thm}[Schur-Horn theorem]\label{horn} 
Let $\{\lambda_{i}\}_{i=1}^{N}$ and $\{d_{i}\}_{i=1}^{N}$ be real sequences in nonincreasing order. There exists a self-adjoint operator $E:\mathcal H_N \to\mathcal H_N$ with eigenvalues $\{\lambda_{i}\}$ and diagonal $\{d_{i}\}$
if and only if 
\begin{equation}\label{horn1}
\sum_{i=1}^{N}d_i =\sum_{i=1}^{N}\lambda_{i} \quad\text{ and }\quad \sum_{i=1}^{n}d_{i} \leq \sum_{i=1}^{n}\lambda_{i} \text{ for all } 1\le n \le N.
\end{equation}
\end{thm}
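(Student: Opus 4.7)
\emph{Necessity.} I would diagonalize $E = U \Lambda U^*$ with $\Lambda = \mathrm{diag}(\lambda_1,\dots,\lambda_N)$, so that $d_i = \sum_j |U_{ji}|^2 \lambda_j$. The matrix $S = (s_{ij})$ with $s_{ij} = |U_{ji}|^2$ is doubly stochastic, so for each $n \leq N$,
\[
\sum_{i=1}^n d_i \;=\; \sum_{j=1}^N c_j\, \lambda_j, \qquad c_j := \sum_{i=1}^n s_{ij} \in [0,1], \qquad \sum_{j=1}^N c_j = n.
\]
Because $\lambda$ is nonincreasing, the supremum over admissible $c$ is attained with $c_j = 1$ for $j \leq n$ and $c_j = 0$ otherwise, yielding $\sum_{i=1}^n d_i \leq \sum_{i=1}^n \lambda_i$; the case $n = N$ is the trace identity.

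\emph{Sufficiency.} I would induct on $N$; the base case $N = 1$ is trivial. For the inductive step, first observe $\lambda_N \leq d_N \leq \lambda_1$: the upper bound is the $n = 1$ majorization, and the lower bound follows by subtracting the $n = N-1$ inequality from the $n = N$ equality. Choose $k$ maximal with $\lambda_k \geq d_N$, and apply a $2 \times 2$ rotation to $\Lambda$ in the $(k, N)$-plane whose angle is fixed by requiring the $(N,N)$-entry of the resulting matrix $F$ to equal $d_N$. Then $F$ is self-adjoint with spectrum $\lambda$, has $(N,N)$-entry $d_N$ and $(k,k)$-entry $\lambda_k + \lambda_N - d_N$, and all other diagonal entries agreeing with $\Lambda$, with the only nonzero off-diagonal mass confined to positions $(k,N)$ and $(N,k)$. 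Deleting row and column $N$ therefore yields a \emph{diagonal} principal submatrix $M$ whose entries, rearranged nonincreasingly, form a vector $\mu_1 \geq \cdots \geq \mu_{N-1}$. The inductive hypothesis applied to $M$ and the target diagonal $(d_1, \ldots, d_{N-1})$ produces $V \in \mathrm{U}(N-1)$ with $V M V^*$ having diagonal $(d_1, \ldots, d_{N-1})$, and conjugating $F$ by $\mathrm{diag}(V, 1)$ then delivers the desired $E$.

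\emph{Main obstacle.} The technical heart of the argument is verifying the reduced majorization $(d_1, \ldots, d_{N-1}) \prec (\mu_1, \ldots, \mu_{N-1})$, which is precisely what is needed to invoke the inductive hypothesis. The sum identity $\sum_i \mu_i = \sum_i d_i - d_N$ is immediate from $\mathrm{tr}(M) = \mathrm{tr}(\Lambda) - d_N$, and the partial sum inequalities for $n < k$ are trivial because $\mu_i = \lambda_i$ in that range. For $n \geq k$ the crucial input is that the choice of $k$ forces $d_i \geq \lambda_i$ for every $i \geq k+1$: indeed $d_i \geq d_N > \lambda_{k+1} \geq \lambda_i$ by the maximality of $k$ and the monotonicity of $d$. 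A short case analysis on the position of the inserted value $\lambda_k + \lambda_N - d_N$ in the sorted list $\mu$, combined with this inequality, then closes the argument.
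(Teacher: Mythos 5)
The paper does not prove Theorem~\ref{horn}; it presents it as the classical result and cites Schur \cite{schur} for necessity and Horn \cite{horn} for sufficiency, so there is no proof in the source to compare against. Your argument is nevertheless a correct rendition of the classical proof, and the comments below are on its own merits.

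For necessity, this is exactly Schur's doubly-stochastic argument; note only that with $E=U\Lambda U^{*}$ the correct formula is $d_i=\sum_j|U_{ij}|^2\lambda_j$ (you wrote $|U_{ji}|^2$, which matches the alternative convention $E=U^{*}\Lambda U$; either way the matrix of squared moduli is doubly stochastic, so the argument is unaffected). For sufficiency, the inductive step via a single $2\times2$ rotation in the $(k,N)$-plane is essentially Horn's construction, and your identification of the crux --- that maximality of $k$ forces $d_i\geq\lambda_i$ for $i\geq k+1$ --- is exactly right. In fact the case analysis you advertise at the end can be bypassed entirely: since $\lambda_k+\lambda_N-d_N\leq\lambda_k$, the multiset of diagonal entries of $M$ is obtained from $\{\lambda_1,\dots,\lambda_{N-1}\}$ by decreasing one element, so its nonincreasing rearrangement $\mu$ satisfies $\mu_i\leq\lambda_i$ for every $i$. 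Combined with $d_i\geq\lambda_i$ for $i\geq k+1$ and the trace identity $\sum_{i=1}^{N-1}d_i=\sum_{i=1}^{N-1}\mu_i$, one gets for $n\geq k$ that
\[
\sum_{i=n+1}^{N-1}d_i \;\geq\; \sum_{i=n+1}^{N-1}\lambda_i \;\geq\; \sum_{i=n+1}^{N-1}\mu_i,
\]
hence $\sum_{i=1}^{n}d_i\leq\sum_{i=1}^{n}\mu_i$; for $n<k$ the inequality is the original one since $\mu_i=\lambda_i$ there. This closes the reduced majorization with no casework. Two small points to tidy up: you should note that $\lambda_1\geq d_N$ guarantees $k$ exists, and the degenerate case $k=N$ (forcing $d_N=\lambda_N$, so no rotation is applied and $M=\mathrm{diag}(\lambda_1,\dots,\lambda_{N-1})$) should be stated separately, though it is trivial.
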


The necessity of \eqref{horn1} is due to Schur \cite{schur}, and the sufficiency of \eqref{horn1} is due to Horn \cite{horn}. It should be noted that \eqref{horn1} can be stated in the equivalent convexity condition
\begin{equation}\label{horn2}
(d_1,\ldots,d_N) \in \operatorname{conv} \{ (\lambda_{\sigma(1)},\ldots,\lambda_{\sigma(N)}): \sigma \in S_N\}.
\end{equation}
This characterization has attracted  significant interest and has been generalized in many remarkable ways. Some major milestones are the Kostant convexity theorem \cite{ko} and the convexity of moment mappings in symplectic geometry \cite{at, gs1, gs2}. Moreover, the problem of extending Theorem \ref{horn} to an infinite dimensional Hilbert space $\mathcal H$ has attracted a great deal of interest.

Neumann \cite{neu} gave an infinite dimensional version of the Schur-Horn theorem phrased in terms of $\ell^\infty$-closure of the convexity condition \eqref{horn2}. Neumann's result can be considered an initial, albeit somewhat crude, solution of this problem. The first fully satisfactory progress 
was achieved by Kadison. In his influential work \cite{k1, k2} Kadison discovered a characterization of diagonals of orthogonal projections acting on $\mathcal H$. The work by Gohberg and Markus \cite{gm} and Arveson and Kadison \cite{ak}
extended the Schur-Horn Theorem \ref{horn} to positive trace class operators. This has been further extended to compact positive operators by Kaftal and Weiss \cite{kw}. These results are stated in terms of majorization inequalities as in \eqref{horn1}. Other notable progress includes the work of Arveson \cite{a} on diagonals of normal operators with finite spectrum.
Moreover, Antezana, Massey, Ruiz, and Stojanoff \cite{amrs} refined the results of Neumann \cite{neu}, and Argerami and Massey \cite{am, am2} studied extensions to II$_1$ factors.
For a detailed survey of recent progress on infinite Schur-Horn majorization theorems and their connections to operator ideals we refer to the paper of Kaftal and Weiss \cite{kw0}.

The authors \cite{mbjj} have recently shown a variant of the Schur-Horn theorem for a class of locally invertible self-adjoint operators on $\mathcal H$. This result was used to characterize sequences of norms of a frame with prescribed lower and upper frame bounds. The second author \cite{jj} has extended Kadison's result \cite{k1,k2} to characterize the set of diagonals of the unitary orbit of a self-adjoint operator with three points in the spectrum. In this work we shall continue this line of research by studying self-adjoint operators with finite spectrum. 

There are two distinct extensions of the Schur-Horn theorem for operators with finite spectrum. The case when the multiplicities of eigenvalues are not prescribed was already considered by the authors in \cite{mbjj2}.
While the main result in \cite{mbjj2} provides a satisfactory description of possible diagonals of operators with finite spectrum, it is far from describing diagonals of the unitary orbit of such operators. In other words, a fully satisfactory Schur-Horn theorem should characterize the diagonals of operators with given eigenvalues and their corresponding multiplicities. This leads to the second more complete variant of the Schur-Horn theorem.
Before we state the full theorem, we need to set up some convenient notation.

\begin{defn}\label{conv}
 Let $\{A_j\}$ be a finite increasing sequence in $\R$, and let $\{N_j\}$ be a sequence in $\N \cup \{\infty\}$ (with the same index set) that takes the value of $\infty$ {\bf at least twice}. Without loss of generality we shall assume that the combined sequence is reindexed as $\{(A_j,N_j)\}_{j=-m}^{n+p+1}$ for some $m,n,p \in \N_0$ and that
\[
N_{0}=N_{n+1}=\infty \qquad\text{and}\qquad N_j<\infty\text{ for }j<0\text{ and } j>n+1.
\]
For simplicity we shall assume that $A_0=0$ and $A_{n+1}=B$.

Let $\{d_i\}_{i\in I}$ be a sequence in $[A_{-m},A_{n+p+1}]$. For each $\alpha \in (0,B)$, define 
\[
C(\alpha) = \sum_{d_{i}<\alpha}d_{i}\quad\text{and}\quad D(\alpha)=\sum_{d_{i}\geq \alpha}(B-d_{i}).
\]
Since the above series may have both positive and negative terms, we shall follow the convention that $C(\alpha)=\infty$ or $D(\alpha)=\infty$, if the corresponding series is not absolutely convergent. Thus, $C(\alpha)<\infty$ means that the series $\sum_{d_{i}<\alpha}d_{i}$ is absolutely convergent.

Let $E$ be a bounded operator on a Hilbert space $\mathcal H$. For $\lambda \in \C$ define
\[
m_E(\lambda)= \dim \ker (E-\lambda).
\]
We say that an operator $E$ has an eigenvalue-multiplicity list $\{(A_j,N_j)\}_{j=-m}^{n+p+1}$ if its spectrum $\sigma(E)=\{A_{-m},\ldots, A_{n+p+1} \}$ and $m_E(A_j)=N_j$ for all $-m \le j \le n+p+1$.
\end{defn}

We are now ready to state the main result of this paper for operators with at least two infinite multiplicity eigenvalues. The corresponding result with one infinite multiplicity is less involved, see Theorem \ref{frhorn}, whereas the case of all finite multiplicities is the classical Schur-Horn theorem.

\begin{thm}
\label{fullthm}
Let $\{(A_j,N_j)\}_{j=-m}^{n+p+1}$, $m,n,p \in \N_0$, be a sequence as in Definition \ref{conv}, and let $\{d_{i}\}_{i\in I}$ be a sequence in $[A_{-m},A_{n+p+1}]$. There exists a self-adjoint operator $E$ with diagonal $\{d_{i}\}_{i\in I}$ and the eigenvalue-multiplicity list $\{(A_j,N_j)\}_{j=-m}^{n+p+1}$ if and only if the following three conditions hold:
\begin{enumerate}

\item (lower exterior majorization) 
for all $r=-m, \ldots, 0$,
\begin{equation}\label{fulllow}
\sum_{d_i \le A_r} (A_r-d_i) \le \sum_{j=-m}^{r-1} (A_r-A_j)N_j.
\end{equation}

\item either we have:
\begin{itemize}
\item (non-summability) $C(B/2)=\infty$ or $D(B/2)=\infty$, and these are the only possibilities if $N_j$ takes the value of $\infty$ more than twice, or 
\item (interior majorization) $C(B/2)<\infty$ and $D(B/2)<\infty$ $($and thus $C(\alpha)<\infty$ and $D(\alpha)< \infty$ for all $\alpha \in (0,B)$$)$, and there exists $k\in \Z$ such that the following three conditions hold:
\begin{align}
\label{fullinf}
|\{i\in I: d_i < B/2\}| &= |\{ i\in I: d_i \ge B/2 \}| = \infty,
\\
\label{fulltrace} 
C(B/2) -D(B/2) & =  \sum_{\genfrac{}{}{0pt}{}{j=-m}{j \not= 0,n+1}}^{n+p+1} A_j N_j +k B,
\\
\label{fullmaj} 
(B-A_r)C(A_r) +A_r D(A_r) & \ge
(B-A_r)\sum_{\genfrac{}{}{0pt}{}{j=-m}{j \not= 0}}^{r} A_j N_j + A_r \sum_{\genfrac{}{}{0pt}{}{j=r+1}{j \ne n+1}}^{n+p+1} (B-A_j)N_j
\\ 
\notag
& \hskip5cm \text{for all } r=1,\ldots,n,
\end{align}
\end{itemize}

\item (upper exterior majorization) for all $r=n+1, \ldots, n+p+1$,
\begin{equation}\label{fullup}
\sum_{d_i \ge A_r} (d_i-A_r) \le \sum_{j=r+1}^{n+p+1} (A_j-A_r)N_j.
\end{equation}
\end{enumerate}
\end{thm}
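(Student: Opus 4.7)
I plan to prove the two directions of Theorem \ref{fullthm} separately, reducing the exterior portions to the classical Schur-Horn theorem and handling the interior portion via the methods of \cite{mbjj2}.

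\textbf{Necessity} of \eqref{fulllow} and \eqref{fullup} follows from the standard inequality
\[
\sum_{i \in I}(\alpha - d_i)_+ \le \tr\bigl((\alpha - E)_+\bigr),
\]
valid for any self-adjoint $E$ with diagonal $\{d_i\}$ and any $\alpha \in \R$. At $\alpha = A_r$ with $r \le 0$, both sides evaluate explicitly via the spectral theorem to give \eqref{fulllow}, and the reflection $E \leftrightarrow -E$ yields \eqref{fullup}. For the interior conditions (ii), after removing the finite-rank exterior spectral projections of $E$, the surviving spectral data is $\{0, A_1, \ldots, A_n, B\}$ and the modifications to the diagonal sums $C$ and $D$ are finite, absorbed into the integer $k$ of \eqref{fulltrace}; the necessity arguments of \cite{mbjj2} then give the required constraints.

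\textbf{Sufficiency.} I would construct $E$ as an orthogonal direct sum
\[
E = E_- \oplus E_c \oplus E_+,
\]
where $E_\pm$ are finite-rank self-adjoint operators absorbing the exterior eigenvalues and $E_c$ is the central operator with spectrum in $\{0, A_1, \ldots, A_n, B\}$. Concretely, for the lower block I would choose a finite subset $I_- \subset I$ and nonnegative integers $k_-, \ell_-$, then use classical Schur-Horn to produce $E_-$ on $\ell^2(I_-)$ with diagonal $\{d_i\}_{i \in I_-}$ and spectrum $\{A_{-m}, \ldots, A_{-1}, 0, B\}$ with multiplicities $\{N_{-m}, \ldots, N_{-1}, k_-, \ell_-\}$; the extra copies of $0$ and $B$ are drawn from the infinite reservoirs at $A_0$ and $A_{n+1}$, and \eqref{fulllow} is exactly the condition required. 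Build $E_+$ symmetrically. Finally, construct $E_c$ on $\ell^2(I \setminus (I_- \cup I_+))$ with the remaining diagonal, spectrum $\{0, A_1, \ldots, A_n, B\}$, and prescribed interior multiplicities $\{\infty, N_1, \ldots, N_n, \infty\}$; this interior-only subproblem is the main technical content, to be addressed by adapting the arguments of \cite{mbjj2} to handle prescribed interior multiplicities.

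\textbf{Main obstacle.} The proof has two delicate points. First, $I_\pm$ and $k_\pm, \ell_\pm$ must be chosen so that the classical Schur-Horn trace identities
\[
\sum_{i \in I_-} d_i = \ell_- B + \sum_{j=-m}^{-1} A_j N_j, \qquad \sum_{i \in I_+} d_i = \ell_+ B + \sum_{j=n+2}^{n+p+1} A_j N_j,
\]
hold consistently with the interior trace equation \eqref{fulltrace}. The integers $\ell_\pm$ (drawing on the infinite reservoir at $B$) and $k$ provide the degrees of freedom to match these sums, but a careful accounting is needed to avoid violating the interior majorization \eqref{fullmaj} after restriction. Second, and more substantively, the central construction for $E_c$ must produce the exact multiplicities $N_1, \ldots, N_n$, whereas \cite{mbjj2} treats unspecified interior multiplicities; this refinement, which constitutes the main technical novelty, is where the bulk of the work is expected to lie.
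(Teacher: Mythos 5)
Your plan correctly identifies the large-scale architecture (split into lower-exterior, central, and upper-exterior pieces) and correctly flags where the hard work lies (the central block with prescribed interior multiplicities), but the mechanism you propose for the sufficiency direction has a fatal gap.

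\textbf{The sufficiency step cannot proceed by directly partitioning $I$.} You propose to find a finite subset $I_- \subset I$ so that the classical Schur-Horn trace identity $\sum_{i\in I_-} d_i = \ell_- B + \sum_{j<0} A_j N_j$ holds exactly for some integer $\ell_-$. But the left-hand side is a sum of finitely many of the \emph{given} real numbers $d_i$, while the right-hand side ranges over a discrete affine set $B\Z + \text{const}$; generically no finite subset of $\{d_i\}$ will hit this target. The integer degrees of freedom $\ell_-$ and $k_-$ do not rescue this, because the obstruction is a continuous (real-valued) trace mismatch, not an integer one. The paper circumvents this by \emph{not} partitioning the diagonal as given. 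Instead it applies a ``decoupling lemma'' (Lemma \ref{dcup}, built on the moving-toward-$0$-$1$ Lemma \ref{movelemma}): one first perturbs the sequence $\{d_i\}$ to a nearby sequence $\{\tilde d_i\}$ that \emph{does} split cleanly into lower/central/upper blocks with the exact required block traces, constructs an operator $\tilde E$ with diagonal $\{\tilde d_i\}$ as a direct sum of block operators, and then uses the second part of the lemma (a unitary-equivalence statement) to pass from $\tilde E$ back to an operator $E$ with the original diagonal $\{d_i\}$. This ``modify, construct, then move back'' pattern is the essential mechanism your proposal lacks; without it, the reduction to block constructions fails at the first step. A subsidiary issue in the same vein: after the decoupling modification, one must also verify that the central block still satisfies interior majorization, which requires a nontrivial re-derivation of the trace and majorization inequalities for $\{\tilde d_i\}_{i\in I^0}$ (this occupies the bulk of Theorem \ref{2infhorn}), not merely ``careful accounting.''

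\textbf{The necessity argument for the interior condition is also underspecified.} You say to ``remove the finite-rank exterior spectral projections of $E$'' and then apply \cite{mbjj2}, but the restriction of $E$ to the orthogonal complement of those eigenspaces is diagonalized with respect to a \emph{different} basis, so its diagonal with respect to any orthonormal basis of that subspace is not a sub-sequence of $\{d_i\}$. The modifications to $C$ and $D$ are therefore not simply ``finite shifts absorbed into $k$.'' The paper instead works directly with the diagonal entries $p_i^{(j)} = \langle P_j e_i, e_i\rangle$ of all spectral projections, carefully separates the exterior contributions $q_i^{(1)}, q_i^{(2)}$ from the central part $\tilde d_i$, and applies Kadison's theorem to the central projection $P_{n+1}$ (Theorems \ref{tt} and \ref{int-nec}). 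This direct computation is what makes the finiteness assertion \eqref{tt3} and the trace identity \eqref{tt4} come out correctly; it is not a corollary of a ``spectral projection removal'' argument.
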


We remark that the trace condition \eqref{fulltrace} makes sense only when all $N_j<\infty$ for $j=1,\ldots,n$. In fact, the assumption that $C(B/2)<\infty$ and $D(B/2)<\infty$ actually forces this property in light of Theorem \ref{tt}. In other words, the interior majorization subcase of Theorem \ref{fullthm} can only happen when there are exactly two infinite multiplicities: $N_0=N_{n+1}=\infty$.

The proof of Theorem \ref{fullthm} occupies most of the paper, and it is broken into several parts. Section 2 recalls the most fundamental results used in this paper such as Kadison's theorem and the ``moving toward $0$-$1$'' lemma, which were extensively employed in authors' earlier work \cite{mbjj, jj}. By far the easiest part of the proof of Theorem \ref{fullthm} is the necessity of exterior majorization in Section 3. In contrast, the necessity of interior majorization is much more complicated, and it splits into two stages. First, we establish the trace condition \eqref{fulltrace}. At the same time we show that the non-summability subcase of (ii) must necessarily happen when more than two eigenvalues have infinite multiplicities. Second, we establish the majorization inequalities \eqref{fullmaj}.

Section 5 shows sufficiency of interior majorization in the special case $m=p=0$, i.e., when exterior majorization is not present. We introduce an important concept of Riemann interior majorization which requires ordering of a diagonal sequence $\{d_i\}_{i\in I}$, and which resembles classical majorization as in \cite{ak, kw0, kw}. In contrast, Lebesgue interior majorization does not require any ordering and is the main invention of the paper. In the crucial case, when $\{d_i\}_{i\in I}$ can be put in nondecreasing order indexed by $\Z$, these two concepts coincide. The proof of this result is elementary, albeit long. The sufficiency of Riemann interior majorization, which plays a central part in the paper, requires an involved combinatorial argument employing machinery from Section 2. Finally, Theorem \ref{inthorn} deals with sequences that satisfy Lebesgue majorization but do not conform to Riemann majorization.

Section 6 shows the sufficiency of exterior majorization in the case when there is exactly one eigenvalue with infinite multiplicity, which is not covered by Theorem \ref{fullthm}. We show that the special case when only either lower or upper exterior majorization is present can be conveniently and swiftly dealt using interior majorization. We also establish a ``decoupling'' lemma, which plays an important role in the rest of our arguments. In short, this lemma enables us to modify our diagonal sequence $\{d_i\}_{i\in I}$ into two separate sequences satisfying lower and upper exterior majorization, respectively. A similar technique is used in Section 7, which shows the sufficiency part of Theorem \ref{fullthm}. We use the decoupling lemma to obtain three modified diagonal sequences satisfying lower and upper exterior, and interior majorization, resp. This process requires careful analysis of resulting diagonal sequences belonging to the same unitary orbit of a suitable self-adjoint operator. As a consequence we obtain two sufficiency results corresponding to the two subcases of part (ii), thus completing the proof of Theorem \ref{fullthm}. 

Finally, in Section 8 we consider a converse problem of characterizing spectra of operators with a fixed diagonal. While Theorem \ref{fullthm} does not resemble the Schur-Horn Theorem in any obvious way, its converse counterpart Theorem \ref{La} does. Given a diagonal sequence $\{d_i\}_{i\in I}$, which satisfies some natural summability conditions, we consider the set $\Lambda_N(\{d_i\})$ of possible lists of $N$ eigenvalues of operators with such diagonals, see \eqref{LN}. Theorem \ref{La} states that $\Lambda_N(\{d_i\})$ has a very special structure. It is a union of $N$, or $N-1$ if $\{d_i\}$ is a diagonal of a projection, upper subsets of constant trace each having a unique minimal element with respect to the majorization order \cite{moa}. This is in close analogy with the Schur-Horn Theorem \ref{horn} which can be restated as follows: the set of possible lists of eigenvalues of operators with fixed diagonal $\{d_i\}_{i=1}^N$ is an upper set with a minimal element $\{d_i\}_{i=1}^N$.

\section{Preliminaries}

The Schur-Horn theorem and its extensions \cite{ak, kw} are usually stated with eigenvalues listed in nonincreasing order indexed by $\N$. However, if we insist on arranging diagonal entries into a nondecreasing sequence, then we should instead use $-\N$ as a part of the indexing set. This leads to two different formulation of the Schur-Horn theorem for finite rank positive operators, see \cite[Theorems 2.1 and 2.2]{mbjj2}. The main innovation here is that we do not require a sequence $\{d_i\}$ to be globally monotone. This allows the possibility that $\{d_i\}$ has infinitely many positive terms and some zero terms. At the same time it also gives us flexibility in arranging small diagonal terms.

\begin{thm}\label{horn-ninc}
Let $\{\lambda_{i}\}_{i=1}^{N}$ be a positive nonincreasing sequence. Let $\{d_{i}\}_{i=1}^{\infty}$ be a nonnegative sequence such that:
\begin{enumerate}
\item
 $d_i \le d_{N}$ for $i\geq N+1$,
\item the subsequence $\{d_i\}_{i=1}^{N}$ is nonincreasing. 
\end{enumerate}

There exists a positive rank $N$ operator $E$ on a Hilbert space $\Hil$ with (positive) eigenvalues $\{\lambda_{i}\}_{i=1}^{N}$ and diagonal $\{d_{i}\}_{i=1}^{\infty}$ if and only if
\begin{equation}
\label{frmaj1}
\sum_{i=n}^{\infty}d_{i} \ge \sum_{i=n}^{N}\lambda_{i} \text{ for all } 1\le n\leq N, \text{ with equality when }n=1.
\end{equation}
\end{thm}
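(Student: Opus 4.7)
\emph{Necessity.} The positive operator $E$ has finite rank $N$, hence is trace class with $\tr E = \sum_{i=1}^{N} \lambda_i$. Expanding the trace in the orthonormal basis realizing the diagonal yields $\sum_{i=1}^\infty d_i = \sum_{i=1}^N \lambda_i$, which is the equality case of \eqref{frmaj1} at $n=1$. Subtracting this identity from the desired $\sum_{i=n}^\infty d_i \ge \sum_{i=n}^N \lambda_i$ rewrites it as the partial-sum inequality $\sum_{i=1}^{n-1} d_i \le \sum_{i=1}^{n-1} \lambda_i$, which is Ky Fan's maximum principle applied to $E$ (or, equivalently, the classical finite Schur--Horn theorem applied to the compression of $E$ to $\lspan\{e_1,\ldots,e_{n-1}\}$).

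\emph{Sufficiency via a frame reformulation.} To construct $E$, I would write $E = V A V^*$, where $V \colon \C^N \to \Hil$ is an isometry with $\ran V \supset \ran E$ and $A \in B(\C^N)$ is positive with eigenvalues $\lambda_1,\ldots,\lambda_N$. Setting $u_i := A^{1/2} V^* e_i \in \C^N$, the diagonal identity $\langle E e_i, e_i \rangle = d_i$ becomes $\|u_i\|^2 = d_i$, and the isometry condition $V^* V = I_{\C^N}$ rewrites as $\sum_{i=1}^\infty u_i u_i^* = A$. So the construction reduces to producing a sequence $\{u_i\}_{i=1}^\infty \subset \C^N$ with prescribed squared norms $d_i$ and frame operator $A$.

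\emph{The construction in two stages.} I build $\{u_i\}$ in two stages. In Stage 1, I use the classical finite Schur--Horn theorem to choose $u_1,\ldots,u_N \in \C^N$ so that $F_N := \sum_{i=1}^N u_i u_i^* \le A$ and the nonincreasing spectrum $(\mu_1,\ldots,\mu_N)$ of the residual $B_N := A - F_N$ is adapted to receiving the tail. In Stage 2, I process the indices $i > N$ one at a time via rank-one updates: at each step, Horn's interlacing theorem allows me to pick $u_i$ with $\|u_i\|^2 = d_i$ and $u_i u_i^* \le B_{i-1}$, provided $d_i$ does not exceed the largest eigenvalue of $B_{i-1}$. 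The convergence $F_k \to A$ then follows from $\tr B_k = \sum_{j>k} d_j \to 0$ and finite dimensionality of $\C^N$.

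\emph{The main obstacle.} The delicate point is Stage 1: the spectrum $(\mu_i)$ of $B_N$ must be set up so that the infinite sequence of rank-one updates of sizes $(d_{N+1}, d_{N+2}, \ldots)$ can be executed without violating interlacing at any stage, and this imposes constraints beyond the trace identity $\sum_i \mu_i = \sum_{j>N} d_j$ and the Weyl inequalities $\mu_i \le \lambda_i$. The bound $d_i \le d_N$ on the tail, combined with the full majorization system \eqref{frmaj1} for $n = 2, \ldots, N$, is precisely what translates (via Horn's description of joint spectra) into the existence of an admissible $(\mu_i)$ and hence of the desired operator $E$.
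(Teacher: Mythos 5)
The paper does not prove Theorem \ref{horn-ninc}---it is imported from \cite{mbjj2}---so your proposal must stand on its own. Your necessity argument is correct: the trace identity is immediate, and after subtracting it, the rewritten inequality $\sum_{i=1}^{n-1}d_i\le\sum_{i=1}^{n-1}\lambda_i$ is Ky Fan's maximum principle applied to the orthonormal set $\{e_1,\ldots,e_{n-1}\}$. Your sufficiency argument, however, has a genuine gap, one you name (``the main obstacle'') but do not close. Stage~2 requires, at each step $i>N$, that $d_i$ not exceed the top eigenvalue of $B_{i-1}$, and whether this can be maintained for all $i$ is determined entirely by the Stage~1 choice of $\operatorname{spec}(B_N)=(\mu_1,\ldots,\mu_N)$. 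The naive choice of a rank-one residual, which would make Stage~2 succeed trivially by always peeling along the top eigenvector, is not in general available: with $N=2$, $\lambda=(3,1)$, $d_1=d_2=\varepsilon$, and a tail of terms $\le\varepsilon$ summing to $4-2\varepsilon$, a rank-one $B_N\le A$ would require $\norm{B_N}=4-2\varepsilon>3=\norm{A}$, which is impossible once $\varepsilon<1/2$, even though \eqref{frmaj1} holds. In general, exhibiting an admissible $(\mu_i)$ means simultaneously solving a Horn--Klyachko compatibility problem for $\operatorname{spec}(F_N)$, $\operatorname{spec}(B_N)$, $\operatorname{spec}(A)$ \emph{and} proving that the sorted tail is majorized (in the infinite sense) by $(\mu_i)$; the latter is itself what guarantees the infinite sequence of rank-one updates can be completed, and is thus another instance of the very kind of theorem you are proving, so the construction as described is in danger of circularity. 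Your closing sentence asserts that the hypotheses ``translate (via Horn's description of joint spectra) into the existence of an admissible $(\mu_i)$,'' but that translation is precisely what a proof would have to supply.

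There is a much shorter route to sufficiency. Hypotheses (i)--(ii) say that sorting $\{d_i\}_{i=1}^\infty$ into nonincreasing order is a permutation that leaves the first $N$ values unchanged. The sorted sequence is then majorized by $(\lambda_1,\ldots,\lambda_N,0,0,\ldots)$: for $k\le N-1$ these are exactly the inequalities you derived from \eqref{frmaj1}, the $k=N$ case follows since $\sum_{i>N}d_i\ge 0$, and for $k>N$ the inequalities follow from nonnegativity of $d_i$ together with the trace identity. The Gohberg--Markus/Arveson--Kadison Schur--Horn theorem for positive trace-class (here finite-rank) operators then produces a rank-$N$ positive operator with eigenvalues $\{\lambda_i\}_{i=1}^N$ and the sorted diagonal; conjugating by the permutation unitary recovers the prescribed diagonal $\{d_i\}_{i=1}^\infty$. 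This is presumably why the paper merely records the statement.
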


\begin{thm}\label{horn-ndec}
Let $\{\lambda_{i}\}_{i=1}^{N}$ be a positive 
nondecreasing sequence. Let $\{d_{i}\}_{i=-\infty}^{N}$ be a nonnegative sequence such that:
\begin{enumerate}
\item
 $d_i \le d_{1}$ for $i\le 0$,
\item the subsequence $\{d_i\}_{i=1}^{N}$ is nondecreasing. 
\end{enumerate}

There exists a positive rank $N$ operator $E$ on a Hilbert space $\Hil$ with (positive) eigenvalues $\{\lambda_{i}\}_{i=1}^{N}$ and diagonal $\{d_{i}\}_{i=-\infty}^{N}$ if and only if
\begin{equation}
\label{frmaj}
\sum_{i=-\infty}^{n}d_{i} \ge \sum_{i=1}^{n}\lambda_{i}  \qquad \text{ for all } 1\leq n \le N, \text{ with equality when }n=N.
\end{equation}
\end{thm}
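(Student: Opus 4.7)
The plan is to derive Theorem \ref{horn-ndec} directly from Theorem \ref{horn-ninc} by the order-reversing reindexing $\sigma(i) := N+1-i$ applied both to the basis (which carries the diagonal) and to the eigenvalue list. The two statements are mirror images of each other: one has nonincreasing eigenvalues and a ``tail'' at large indices bounded by the smallest main diagonal entry $d_N$; the other has nondecreasing eigenvalues and a tail at nonpositive indices bounded by $d_1$. Reversing the order of the indices $1,\ldots,N$ and relabeling the tail $\{d_i\}_{i\le 0}$ as entries at indices $\ge N+1$ should convert one setup into the other.

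First, I would define $\lambda'_j := \lambda_{N+1-j}$ for $1\le j\le N$ and $d'_j := d_{N+1-j}$ for $j\ge 1$. The monotonicity hypotheses transport cleanly: $\{\lambda'_j\}_{j=1}^N$ and $\{d'_j\}_{j=1}^N$ are both nonincreasing, and the hypothesis $d_i \le d_1$ for $i \le 0$ becomes $d'_j \le d'_N$ for $j \ge N+1$, since $d'_N = d_1$. Moreover, if $\{e_i\}_{i=-\infty}^N$ is an orthonormal basis of $\Hil$ with respect to which an operator $E$ has diagonal $\{d_i\}_{i=-\infty}^N$, then relabeling via $e'_j := e_{N+1-j}$ gives \emph{the same operator} $E$ the diagonal $\{d'_j\}_{j=1}^\infty$. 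Since permuting an eigenvalue list does not change which operators realize it, the existence of $E$ with the original data is equivalent to the existence of $E$ with the primed data.

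Next, I would verify that the majorization condition \eqref{frmaj} transforms into \eqref{frmaj1}. Substituting $j := N+1-i$, the left-hand side of \eqref{frmaj} becomes $\sum_{j=N+1-n}^{\infty} d'_j$ (the range $i \le n$ corresponds to $j \ge N+1-n$), and the right-hand side becomes $\sum_{j=N+1-n}^{N} \lambda'_j$. Setting $m := N+1-n$, so that $n$ ranging over $1,\ldots,N$ corresponds to $m$ ranging over $N,\ldots,1$, the condition becomes
\[
\sum_{j=m}^{\infty} d'_j \;\ge\; \sum_{j=m}^{N} \lambda'_j \quad\text{for all } 1\le m\le N,
\]
with equality at $m=1$ (corresponding to $n=N$). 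This is precisely \eqref{frmaj1}.

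With these translations in place, Theorem \ref{horn-ninc} applied to the primed data yields the desired biconditional for the unprimed data. There is no genuine obstacle here: the argument is a bookkeeping reduction that exploits the unitary invariance of the notion of ``diagonal'' under permutations of the basis and of the eigenvalue list. The only point requiring care is ensuring that the reindexing preserves both directions of the equivalence, which it does automatically since it acts only on labels and not on the operator itself.
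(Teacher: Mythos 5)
Your reduction is correct. The paper states Theorems \ref{horn-ninc} and \ref{horn-ndec} side by side without proof, citing \cite[Theorems 2.1 and 2.2]{mbjj2}, so there is no internal proof to compare against; but your derivation of Theorem \ref{horn-ndec} from Theorem \ref{horn-ninc} via the order-reversing bijection $j \mapsto N+1-j$ on both the eigenvalue list and the diagonal index set is exactly the right reduction, and you have verified all the needed translations: the monotonicity hypotheses, the tail bound $d_i \le d_1$ becoming $d'_j \le d'_N$, and the majorization \eqref{frmaj} becoming \eqref{frmaj1} after the substitution $m = N+1-n$ (with the equality case $n=N$ mapping to $m=1$). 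Since permuting the orthonormal basis and the eigenvalue list simultaneously leaves the operator unchanged, the existence statements on the two sides are equivalent, and both directions of the biconditional transfer. No gap.
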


Later we shall see yet another variant of the Schur-Horn theorem for positive finite rank operators, Theorem \ref{pfrhorn}, which does not rely on any particular way of ordering of diagonal entries as in Theorems \ref{horn-ninc} and \ref{horn-ndec}. Moreover, we shall also extend this to a general Schur-Horn theorem for finite rank (not necessarily positive) self-adjoint operators, see Theorem \ref{frhorn}. 
We will also make an extensive use of Kadison's theorem \cite{k1,k2} which characterizes diagonals of orthogonal projections.

\begin{thm}[Kadison]
\label{Kadison} Let $\{d_{i}\}_{i\in I}$ be a sequence in $[0,1]$ and $\alpha\in(0,1)$. Define
\[
C(\alpha)=\sum_{d_{i}<\alpha}d_{i}, \qquad D(\alpha)=\sum_{d_{i}\geq \alpha}(1-d_{i}).\]
There exists an orthogonal projection on $\ell^2(I)$ with diagonal $\{d_{i}\}_{i\in I}$ if and only if either:
\begin{enumerate}
\item $C(\alpha)=\infty$ or $D(\alpha)=\infty$, or
\item $C(\alpha)<\infty$ and $D(\alpha)<\infty$, and
\begin{equation}
\label{kadcond} 
C(\alpha)-D(\alpha)\in\Z.
\end{equation}
\end{enumerate}
\end{thm}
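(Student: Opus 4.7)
Partition $I = I_< \sqcup I_\ge$ according to whether $d_i < \alpha$ or $d_i \ge \alpha$, and split $\ell^2(I) = H_< \oplus H_\ge$ with coordinate projections $Q_<, Q_\ge$. The blocks of $P$ are: $A := Q_< P Q_<$, positive with $\tr(A) = C(\alpha)$; $D := Q_\ge P Q_\ge$, with $\tr(I_{H_\ge} - D) = D(\alpha)$; and the off-diagonal block $B := Q_< P Q_\ge$ satisfying $BB^* = A - A^2$ and $B^*B = D - D^2$ (from $P^2 = P$). Assume case (i) fails, so $C(\alpha), D(\alpha) < \infty$; then $A$ and $I_{H_\ge} - D$ are positive trace class, and $B$ is Hilbert--Schmidt. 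Consider the restriction $V := P|_{H_\ge} : H_\ge \to P\ell^2(I)$. A direct computation gives $V^*V = D$ on $H_\ge$ and $VV^* = PQ_\ge P|_{P\ell^2(I)}$, so that $I - V^*V = I_{H_\ge} - D$ and $I - VV^* = PQ_< P|_{P\ell^2(I)}$ are positive trace class with traces $D(\alpha)$ and $C(\alpha)$ respectively. Thus $V$ is Fredholm, and the standard identity
\[
\operatorname{ind}(V) = \tr(I - V^*V) - \tr(I - VV^*)
\]
(valid for Fredholm operators whose defect operators $I - V^*V, I - VV^*$ are trace class) yields $\operatorname{ind}(V) = D(\alpha) - C(\alpha) \in \Z$, proving the integer condition.

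\textbf{Sufficiency.} The core tool is a $2 \times 2$ unitary move: a unitary acting as identity off $\lspan\{e_i, e_j\}$ conjugates $P$ into a new projection and, subject to the compatibility imposed by $P$, can replace the pair $(d_i, d_j)$ by any pair $(d', d'')$ with $d' + d'' = d_i + d_j$ and $d', d'' \in [0,1]$. In case (i), say $D(\alpha) = \infty$ (the case $C(\alpha) = \infty$ is symmetric via $P \leftrightarrow I - P$): the infinite reserve of indices with $d_i \in [\alpha, 1)$ permits iteratively pairing each small entry $d_i < \alpha$ with a large partner and rotating to drain the small entry to $0$; the ``moving toward $0$-$1$'' machinery of Section~2 organizes these countably many rotations into a single unitary conjugation of a $\{0,1\}$-valued diagonal projection, producing one with the prescribed diagonal.

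\textbf{Case (ii) and main obstacle.} In case (ii), with $k = C(\alpha) - D(\alpha) \in \Z$, the summability confines all ``interior'' entries $d_i \in (0,1)$ to a countable set whose tails accumulate at $0$ (from the small side) and at $1$ (from the large side). I would construct $P$ as the strong-operator limit of finite-rank modifications: for each $N$, enlarge a finite block $F_N \subset I$ absorbing the $N$ largest interior entries together with a buffer of $\{0,1\}$-entries, apply the classical Schur--Horn Theorem~\ref{horn} on $F_N$ with an eigenvalue list of $0$s and $1$s whose rank is dictated by $k$ and the number of large entries in $F_N$, and place a diagonal $\{0,1\}$-projection on $I \setminus F_N$. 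The main obstacle is twofold: (a) verifying that the majorization inequalities \eqref{horn1} hold for the finite Schur--Horn problem on each $F_N$, which is precisely where the integrality condition \eqref{kadcond} enters; and (b) fitting the successive finite-rank constructions together into a single unitary conjugation whose strong-operator limit is a projection realizing the prescribed diagonal on all of $\ell^2(I)$.
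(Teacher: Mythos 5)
The paper does not prove this theorem; it is quoted from Kadison's work \cite{k1,k2} and used as a black box. So there is no ``paper's own proof'' to compare against; the assessment below is on the merits of your sketch.

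Your necessity argument is correct and complete. The block decomposition of $P$ with respect to $H_<\oplus H_\ge$, the verification that $I-V^*V$ and $I-VV^*$ are trace class with traces $D(\alpha)$ and $C(\alpha)$, and the conclusion via the Fredholm index formula $\operatorname{ind}(V)=\tr(I-V^*V)-\tr(I-VV^*)$ are all sound. (One small caveat: one should say explicitly that $V$ is Fredholm because $V^*V$ and $VV^*$ are compact perturbations of the identity.) This index-theoretic proof is not Kadison's original route, which was more combinatorial; it is closer in spirit to Arveson's later operator-theoretic treatment, and it is arguably the cleanest known argument for the integer condition.

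For sufficiency there is a genuine gap, which you candidly flag yourself. In case (i) the plan of draining each entry $d_i<\alpha$ against a large partner is morally correct, but ``organizing countably many rotations into a single unitary'' is precisely the part that needs a real argument: one has to show the product of the $2\times2$ rotations converges in a useful topology \emph{and} that the diagonal of the limiting projection is the one you want; naive strong-operator limits of projections need not have the expected diagonal unless you set up the bookkeeping carefully (e.g.\ arrange each diagonal entry to be frozen after finitely many steps). In case (ii) the proposal stops at a wish list. The two obstacles you name --- verifying that the finite Schur--Horn majorization holds on each block $F_N$, and gluing the finite-rank constructions into a single projection --- are exactly where the difficulty of Kadison's theorem lives, and simply naming them does not discharge them. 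In particular, the finite Schur--Horn step is not automatic: if the $F_N$ are not chosen consistently, the rank you need on $F_N$ (forced by $k$ and the number of large entries swept in so far) may fail the majorization inequalities at some intermediate index, and the blocks must be nested compatibly so that the unitaries can be composed. Without a concrete choice of the $F_N$, a verified majorization statement, and a convergence argument for the unitaries, case (ii) is unproven; this is the hard half of Kadison's theorem and the sketch as written does not establish it.
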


\begin{remark}\label{rpart}
Note that if there exists a partition of $I=I_{0}\cup I_{1}$ such that
\begin{equation}\label{partition}
\sum_{i\in I_{0}}d_{i}<\infty \quad\text{and}\quad\sum_{i\in I_{1}}(1-d_{i})<\infty,
\end{equation}
then for all $\alpha\in (0,1)$ we have $C(\alpha)<\infty$ and $D(\alpha)<\infty$ and
\[
\bigg(\sum_{i\in I_{0}} d_{i} - \sum_{i\in I_{1}}(1-d_{i}) \bigg) - (C(\alpha)-D(\alpha))\in\Z.
\]
Thus, in the presence of a partition satisfying \eqref{partition},
\[
\bigg(\sum_{i\in I_{0}} d_{i} - \sum_{i\in I_{1}}(1-d_{i}) \bigg) \in\Z
\]
is a necessary and sufficient condition for a sequence to the be the diagonal of a projection. We will find use for these more general partitions in the sequel.
\end{remark}

The following ``moving toward $0$-$1$'' lemma plays a key role in our arguments. Lemma \ref{movelemma} is simply a concatenation of \cite[Lemmas 4.3 and 4.4]{mbjj}.

\begin{lem}\label{movelemma}
 Let $\{d_{i}\}_{i\in I}$ be a bounded sequence in $\R$ and let $A,B\in\R$ with $A<B$. Let $I_0, I_1 \subset I$ be two disjoint finite subsets such that $\{d_{i}\}_{i\in I_{0}}$ and $\{d_{i}\}_{i\in I_{1}}$ are in $[A,B]$, and $\max\{d_{i}: i \in I_0\}\leq\min\{d_{i}: i \in I_1\}$. Let $\eta_{0}\geq 0$ and
\[
\eta_{0}\leq\min\bigg\{\sum_{i\in I_0} (d_{i}-A),\sum_{i\in I_1}
(B-d_{i}) \bigg\}.\]
(i) There exists a sequence $\{\tilde d_{i}\}_{i\in I}$
satisfying
\begin{align}
\label{ops0}
\tilde{d}_i = d_i &\quad\text{for } i \in I \setminus (I_0 \cup I_1),
\\
\label{ops1}
A\leq \tilde d_i \leq d_i \quad i\in I_0,
&\quad\text{and}\quad
B\geq \tilde d_i \ge d_i, \quad i\in I_1,
\\
\label{ops2}
\eta_0+\sum_{i\in I_0}(\tilde{d}_{i}-A) =\sum_{i\in I_0}(d_{i}-A)
&\quad\text{and}\quad 
\eta_0+\sum_{i\in I_1} (B-\tilde{d}_{i})=\sum_{i\in I_1} (B-d_{i}).
\end{align}
(ii) For any self-adjoint operator $\tilde E$ on $\Hil$ with diagonal $\{\tilde d_{i}\}_{i\in I}$,
there exists an operator $E$ on $\Hil$ unitarily equivalent to $\tilde E$ with diagonal $\{d_{i}\}_{i\in I}$.
\end{lem}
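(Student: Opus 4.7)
Part (i) can be handled by a direct greedy construction. The plan is to enumerate $I_0$ and, with a running remainder $\rho$ initialized to $\eta_0$, lower each $d_i$ toward $A$ by $\min(d_i - A, \rho)$, updating $\rho$ by the amount subtracted; the hypothesis $\eta_0 \le \sum_{i \in I_0}(d_i - A)$ guarantees $\rho$ reaches $0$. I would then perform the symmetric construction on $I_1$, raising toward $B$, and leave $\tilde d_i = d_i$ elsewhere. Conditions \eqref{ops0}--\eqref{ops2} follow immediately from the construction.

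For part (ii), the plan is to set $F = \lspan\{e_i : i \in I_0 \cup I_1\}$, a finite-dimensional subspace, and let $P$ denote the orthogonal projection onto $F$. I would build a unitary $U$ on $\Hil$ that acts as the identity on $F^\perp$ and as some unitary $V$ on $F$, to be chosen. Since $Ue_k = e_k$ for $k \notin I_0 \cup I_1$, the diagonal of $U\tilde E U^*$ will agree with that of $\tilde E$ outside $I_0 \cup I_1$, where it already equals $d_k$. For $k \in I_0 \cup I_1$, using $U^* e_k = V^* e_k \in F$, a short computation gives
\[
\langle U \tilde E U^* e_k, e_k \rangle = \langle \tilde E V^* e_k, V^* e_k \rangle = \langle (P \tilde E P) V^* e_k, V^* e_k \rangle = \langle V(P \tilde E P) V^* e_k, e_k \rangle,
\]
so the task reduces to finding $V$ so that $V(P \tilde E P)V^*$ has diagonal $\{d_i\}_{i \in I_0 \cup I_1}$ relative to the basis $\{e_i\}_{i \in I_0 \cup I_1}$ of $F$. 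By Theorem \ref{horn} this is solvable once the eigenvalues of $P \tilde E P$ majorize $\{d_i\}_{i \in I_0 \cup I_1}$, and the necessity direction of Schur-Horn guarantees that those eigenvalues already majorize the diagonal $\{\tilde d_i\}_{i \in I_0 \cup I_1}$ of $P \tilde E P$. Transitivity of majorization then reduces everything to showing that $\{\tilde d_i\}_{i \in I_0 \cup I_1}$ majorizes $\{d_i\}_{i \in I_0 \cup I_1}$.

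This majorization step is where I expect the main work to lie. Equality of total sums will follow from \eqref{ops2}. The ordering hypothesis $\max_{I_0} d_i \le \min_{I_1} d_i$, combined with $\tilde d_i \le d_i$ on $I_0$ and $\tilde d_i \ge d_i$ on $I_1$, forces $\max_{I_0} \tilde d_i \le \min_{I_1} \tilde d_i$, so when sorted in decreasing order both sequences list their $I_1$-values before their $I_0$-values. For a partial sum stopping within the $I_1$-block, pointwise comparison on $I_1$ will suffice. For one extending into $I_0$, the $I_1$-contribution of $\tilde d$ exceeds that of $d$ by exactly $\eta_0$ by \eqref{ops2}, while the $I_0$-deficit will be bounded above by $\eta_0$ via the elementary estimate that for any $S \subseteq I_0$,
\[
\sum_{i \in S} d_i \le \sum_{i \in S} \tilde d_i + \sum_{i \in I_0}(d_i - \tilde d_i) = \sum_{i \in S} \tilde d_i + \eta_0,
\]
which I would apply to an $S$ of appropriate size realizing the top-$r$ sum for $d$ on $I_0$. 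The two contributions cancel, yielding the required majorization, after which Theorem \ref{horn} produces the desired $V$ and completes the proof.
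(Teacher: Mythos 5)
The paper itself does not prove this lemma: immediately before the statement it says ``Lemma \ref{movelemma} is simply a concatenation of \cite[Lemmas 4.3 and 4.4]{mbjj}'' and leaves the argument to that earlier paper. So there is no in-paper proof to compare against; I can only assess your argument on its own terms.

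Your proof is correct and self-contained. Part (i) by a greedy redistribution is straightforward and delivers \eqref{ops0}--\eqref{ops2} as you say. For part (ii), the reduction is clean: setting $F=\lspan\{e_i: i\in I_0\cup I_1\}$, a block unitary $U=V\oplus \mathbf{I}_{F^\perp}$ preserves the diagonal off $I_0\cup I_1$ (where \eqref{ops0} already gives $\tilde d_i=d_i$), and the computation $\langle U\tilde E U^*e_k,e_k\rangle=\langle V(P\tilde E P)V^*e_k,e_k\rangle$ for $k\in I_0\cup I_1$ correctly reduces everything to the finite-dimensional compression $P\tilde EP|_F$. The Schur--Horn chain is the right mechanism: Schur's necessity gives that the eigenvalues of $P\tilde EP|_F$ majorize $\{\tilde d_i\}_{i\in I_0\cup I_1}$, and once you verify $\{d_i\}\prec\{\tilde d_i\}$ on $I_0\cup I_1$, transitivity and Horn's sufficiency produce the block matrix with diagonal $\{d_i\}$, hence (by equality of spectra) the required $V$. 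Your majorization verification is where the ordering hypothesis $\max_{I_0}d_i\le\min_{I_1}d_i$ earns its keep, and the argument is sound: equality of totals comes from \eqref{ops2}; for top-$r$ sums with $r\le|I_1|$ you use that pointwise $\tilde d_i\ge d_i$ on $I_1$ implies domination of sorted partial sums; for $r>|I_1|$ the $I_1$-surplus is exactly $\eta_0$ by \eqref{ops2}, and the $I_0$-deficit of any $S\subset I_0$ is bounded by $\sum_{I_0}(d_i-\tilde d_i)=\eta_0$ since those differences are nonnegative by \eqref{ops1}. The two cancel, giving $\{d_i\}\prec\{\tilde d_i\}$. This is a legitimate and arguably more direct route than the iterative $2\times2$-rotation machinery one typically sees in proofs of moving lemmas of this sort (and which I expect is what \cite[Lemmas 4.3, 4.4]{mbjj} use); you get the finite block in one shot from Theorem~\ref{horn}, at the price of having to carry out the majorization estimate by hand rather than moving one pair of coordinates at a time.
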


\section{Necessity of exterior majorization}

In this section we will show the necessity of the exterior majorizations in Theorem \ref{fullthm}. This is a consequence of the following two elementary results. Theorem \ref{ext} establishes a majorization for operators with discrete spectrum in the lower part.

\begin{thm}\label{ext}
Suppose that $E$ is a self-adjoint operator on a Hilbert space $\mathcal H$ with 
\[
\sigma(E) \subset \{\la_1,\ldots,\la_{m-1}\} \cup [\la_m,\infty),
\]
where $\la_1 < \ldots< \la_m$, $m\ge 2$. Let $\{e_{i}\}_{i\in I}$ be an orthonormal basis for $\Hil$ and $d_{i}=\langle Ee_{i},e_{i}\rangle$. Then, for any $r=2,\ldots, m$,
\begin{equation}\label{ext1}
\sum_{i\in I,\ d_i \le \la_r} (\la_r - d_i) \le \sum_{j=1}^{r-1} (\la_r-\la_j)N_j,
\qquad\text{where }N_j=m_E(\la_j).
\end{equation}
\end{thm}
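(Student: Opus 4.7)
The plan is to bound the left-hand side of \eqref{ext1} by the trace of the positive part $T := (\la_r I - E)_+$, and to identify this trace with the right-hand side exactly. This is essentially the standard variational argument underlying the necessity direction of the Schur-Horn theorem, applied to a shifted version of $E$.

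First I would dispose of the trivial case where $N_j = \infty$ for some $j \in \{1,\ldots,r-1\}$: then the right-hand side of \eqref{ext1} is $+\infty$ and the inequality is vacuous, so I may assume $N_j<\infty$ for each such $j$.

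Next I would compute $T$ explicitly via the spectral theorem. Writing $P_j$ for the spectral projection of $E$ onto $\ker(E-\la_j)$, the hypothesis $\sigma(E)\subset\{\la_1,\ldots,\la_{m-1}\}\cup[\la_m,\infty)$ gives
\[
\la_r I - E \;=\; \sum_{j=1}^{m-1}(\la_r-\la_j)P_j \;+\; S,
\]
where $S$ is a self-adjoint operator with $\sigma(S)\subset(-\infty,\la_r-\la_m]$. Since $r\le m$, every eigenvalue $\la_r-\la_j$ with $j\ge r$ and the entire spectrum of $S$ are nonpositive, so taking positive parts kills all of these contributions and leaves
\[
T \;=\; \sum_{j=1}^{r-1}(\la_r-\la_j)P_j.
\]
This is a positive finite-rank operator with $\tr T = \sum_{j=1}^{r-1}(\la_r-\la_j)N_j$, matching the right-hand side of \eqref{ext1}.

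The final step is the operator inequality $T\ge \la_r I - E$, immediate from functional calculus, which at the level of diagonals reads $\langle Te_i,e_i\rangle \ge \la_r-d_i$ for every $i\in I$. On the index set $\{i: d_i\le \la_r\}$ both sides are nonnegative, so restricting the sum there and then enlarging it to all of $I$ (permissible since $T\ge 0$) yields
\[
\sum_{d_i\le\la_r}(\la_r-d_i) \;\le\; \sum_{i\in I}\langle Te_i,e_i\rangle \;=\; \tr T \;=\; \sum_{j=1}^{r-1}(\la_r-\la_j)N_j,
\]
which is \eqref{ext1}. I do not expect any serious obstacle; the only point requiring a moment of care is verifying that the continuous tail $\sigma(E)\cap[\la_m,\infty)$ contributes nothing to the positive part of $\la_r I - E$, which is precisely why the hypothesis $r\le m$ is imposed.
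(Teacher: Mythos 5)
Your proposal is correct and is essentially the paper's argument in different notation. The paper decomposes $E=\la_1 P_1+\cdots+\la_{r-1}P_{r-1}+\tilde E$, sets $P_r=\mathbf I-\sum_{j<r}P_j$, and uses the operator inequality $\la_r P_r\le\tilde E$ to obtain $d_i\ge\sum_{j=1}^r\la_j p_i^{(j)}$; rearranging gives precisely $\la_r-d_i\le\sum_{j=1}^{r-1}(\la_r-\la_j)p_i^{(j)}=\langle Te_i,e_i\rangle$, which is the same pointwise estimate you derive from $T\ge\la_r\mathbf I-E$. Indeed $\tilde E-\la_r P_r=T-(\la_r\mathbf I-E)$, so the two operator inequalities are identical; your packaging via the positive part $T=(\la_r\mathbf I-E)_+$ and its trace is just a cleaner way to organize the same computation.
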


\begin{proof}
For a fixed $r=2,\ldots, m$ we decompose $E= \la_1 P_1 + \ldots \la_{r-1}P_{r-1}+\tilde E$, where $P_1,\ldots,P_{r-1}$ are mutually orthogonal projections onto eigenspaces with eigenvalues $\la_1,\ldots,\la_{r-1}$, resp.  Define the projection $P_r={\mathbf I}-(P_1+\ldots+P_{r-1})$, where $\mathbf I$ is the identity on $\mathcal H$. As a consequence of the spectrum assumption, we have $\sigma(\tilde E) \subset \{0\} \cup [\la_r,\infty)$, and $\la_r P_r \le \tilde E$. Hence, for all $i\in I$, $d_i \ge \sum_{j=1}^r \la_j p^{(j)}_i$, where $p_i^{(j)}=\langle P_j e_i, e_i\rangle$ are diagonal entries of $P_j$. Thus,
\[
\begin{aligned}
\sum_{d_i \le \la_r} (\la_r - d_i) \le \sum_{d_i \le \la_r}  \bigg( \la_r - \sum_{j=1}^r \la_j p^{(j)}_i\bigg)
&= \sum_{d_i \le \la_r}  \bigg( \la_r(1-p_i^{(r)}) - \sum_{j=1}^{r-1} \la_j p^{(j)}_i \bigg) 
\\
&= \sum_{d_i \le \la_r}  \sum_{j=1}^{r-1} (\la_r-\la_j) p^{(j)}_i.
\end{aligned}
\]
In the last step we used the fact that $p^{(1)}_i+\ldots +p^{(r)}_i=1$ for all $i\in I$. Since $\sum_{i\in I} p^{(j)}_i = N_j$ for $j=1,\ldots, r-1$, we have
\[
 \sum_{j=1}^{r-1} (\la_r-\la_j)N_j = \sum_{i\in I}  \sum_{j=1}^{r-1} (\la_r-\la_j) p^{(j)}_i
 \ge  \sum_{d_i \le \la_r}  \sum_{j=1}^{r-1} (\la_r-\la_j) p^{(j)}_i.
\]
Combining the last two estimates yields \eqref{ext1}.
\end{proof}

By the symmetry we automatically obtain a version of Theorem \ref{ext} for operators with discrete spectrum in the upper part.

\begin{thm}\label{exts}
Suppose that $E$ is a self-adjoint operator on $\mathcal H$ with 
\[
\sigma(E) \subset (-\infty,\la_m] \cup  \{\la_{m-1},\ldots,\la_1\},
\]
where $\la_m < \ldots < \la_1$, $m\ge 2$. Let $\{e_{i}\}_{i\in I}$ be an orthonormal basis for $\Hil$ and $d_{i}=\langle Ee_{i},e_{i}\rangle$. Then, for any $r=2,\ldots, m$,
\begin{equation}\label{exts1}
\sum_{i\in I,\ d_i \ge \la_r} (d_i - \la_r) \le \sum_{j=1}^{r-1} (\la_j-\la_r)N_j,
\qquad\text{where }N_j=m_E(\la_j).
\end{equation}
\end{thm}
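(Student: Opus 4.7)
The plan is to deduce Theorem \ref{exts} from Theorem \ref{ext} by passing from $E$ to $-E$. The point is that flipping signs converts an operator whose spectrum is discrete above $\la_m$ and continuous below into one whose spectrum is discrete below $-\la_m$ and continuous above, which is exactly the setting of Theorem \ref{ext}.

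First I would verify the bookkeeping. If $E$ is self-adjoint with $\sigma(E)\subset (-\infty,\la_m]\cup\{\la_{m-1},\ldots,\la_1\}$ and $\la_m<\la_{m-1}<\cdots<\la_1$, then $-E$ is self-adjoint with
\[
\sigma(-E)\subset\{-\la_1,\ldots,-\la_{m-1}\}\cup[-\la_m,\infty),
\]
and the values $-\la_1<-\la_2<\cdots<-\la_m$ are listed in increasing order, matching the hypotheses of Theorem \ref{ext} with eigenvalues $\mu_j:=-\la_j$. The orthonormal basis $\{e_i\}_{i\in I}$ is unchanged, the diagonal of $-E$ relative to this basis is $\{-d_i\}_{i\in I}$, and the eigenspaces are preserved, so $m_{-E}(\mu_j)=m_{-E}(-\la_j)=m_E(\la_j)=N_j$.

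Next I would simply apply Theorem \ref{ext} to $-E$. For each $r=2,\ldots,m$ it gives
\[
\sum_{i\in I,\ -d_i\le \mu_r}(\mu_r-(-d_i))\le \sum_{j=1}^{r-1}(\mu_r-\mu_j)N_j.
\]
Substituting $\mu_j=-\la_j$: the indexing condition $-d_i\le -\la_r$ is the same as $d_i\ge \la_r$, the summand on the left becomes $-\la_r+d_i=d_i-\la_r$, and the summand on the right becomes $-\la_r-(-\la_j)=\la_j-\la_r$. This is precisely inequality \eqref{exts1}.

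There is no real obstacle here: the symmetry $E\mapsto -E$ is an exact involution on the hypotheses and conclusions of the two theorems, so the only thing to be careful about is the reindexing of the $\la_j$'s (they are decreasing in Theorem \ref{exts} but increasing in Theorem \ref{ext}), which is handled by the substitution $\mu_j=-\la_j$ above.
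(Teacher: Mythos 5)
Your proof is correct and is exactly the argument the paper uses: the paper's proof also applies Theorem \ref{ext} to $-E$ with the increasing sequence $-\la_1<\cdots<-\la_m$ and reads off \eqref{exts1}. You have simply spelled out the sign bookkeeping more explicitly.
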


\begin{proof}
Observe that $-E$ satisfies the hypothesis of Theorem \ref{ext} for the sequence $-\lambda_1< \ldots < -\lambda_m$. Then, Theorem \ref{ext} yields \eqref{exts1}.
\end{proof}

\begin{proof}[Proof of necessity of Theorem \ref{fullthm}(i)(iii)]
Suppose that $E$ is a self-adjoint operator with diagonal $\{d_{i}\}_{i\in I}$ and the eigenvalue-multiplicity list $\{(A_j,N_j)\}_{j=-m}^{n+p+1}$. Then, Theorems \ref{ext} and \ref{exts} yield (i) and (iii), resp.
\end{proof}

It is worth mentioning that the above results provide   majorization condition also for operators with an infinite discrete spectrum. Corollary \ref{exti} can be considered as an extension of majorization for trace class operators; compare with \cite{ak}.

\begin{cor}\label{exti}
Suppose that $\{\la_j\}_{j\in\N}$ is an decreasing sequence with limit $\la_\infty = \lim_{j\to\infty} \la_j$. Suppose that $E$ is a self-adjoint operator with 
\[
\sigma(E) \subset \{\la_1,\la_2, \ldots \} \cup (-\infty,\la_\infty].
\]
Then,
\begin{equation}\label{exti1}
\sum_{d_i \ge \la_\infty} (d_i - \la_\infty) \le \sum_{j=1}^{\infty} (\la_j-\la_\infty)N_j,
\qquad\text{where }N_j=m_E(\la_j).
\end{equation}
\end{cor}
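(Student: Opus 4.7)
The plan is to derive the claim from Theorem~\ref{exts} by a truncation argument: apply it at each finite $m\ge 2$ and then pass to the limit as $m\to\infty$.

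For fixed $m\ge 2$, since $\la_m > \la_{m+1} > \cdots \to \la_\infty$, the tail $\la_{m+1}, \la_{m+2}, \ldots$ lies in $(\la_\infty, \la_m)$, and $(-\infty, \la_\infty] \subset (-\infty, \la_m]$. Hence the hypothesis $\sigma(E) \subset \{\la_1, \la_2, \ldots\} \cup (-\infty, \la_\infty]$ yields $\sigma(E) \subset (-\infty, \la_m] \cup \{\la_{m-1}, \ldots, \la_1\}$, so Theorem~\ref{exts} applied with $r=m$ gives
\[
\sum_{d_i \ge \la_m} (d_i - \la_m) \le \sum_{j=1}^{m-1}(\la_j - \la_m) N_j.
\]

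I would then let $m\to\infty$ and invoke monotone convergence on both sides. On the left, the sets $\{i: d_i \ge \la_m\}$ are nondecreasing in $m$ with union $\{i: d_i > \la_\infty\}$, and for each such $i$ the quantity $d_i - \la_m$ is nondecreasing with limit $d_i - \la_\infty$; hence the left-hand side tends monotonically to $\sum_{d_i > \la_\infty}(d_i - \la_\infty)$, which equals the target $\sum_{d_i \ge \la_\infty}(d_i - \la_\infty)$, since the boundary terms $d_i = \la_\infty$ contribute zero. On the right, each existing summand $(\la_j - \la_m) N_j$ is nondecreasing in $m$ with limit $(\la_j - \la_\infty) N_j$, and new nonnegative terms are added as $m$ grows, so the right-hand side tends to $\sum_{j=1}^\infty(\la_j - \la_\infty) N_j$. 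Passing to the limit then proves \eqref{exti1}.

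The argument is essentially bookkeeping, so I do not anticipate a serious obstacle. The one point worth checking is that when some $N_j = \infty$ the right-hand side of \eqref{exti1} is trivially $+\infty$, and that Theorem~\ref{exts} (via Theorem~\ref{ext}) remains valid without a finite-multiplicity assumption, since its proof only uses the identity $\sum_{i\in I} p_i^{(j)} = N_j$, which holds in $[0,\infty]$.
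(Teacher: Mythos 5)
Your proof is correct and follows essentially the same route as the paper: for each truncation level apply Theorem~\ref{exts} (the inclusion $\sigma(E)\subset(-\infty,\la_m]\cup\{\la_{m-1},\ldots,\la_1\}$ holding because $\{\la_{m+1},\la_{m+2},\ldots\}\cup(-\infty,\la_\infty]\subset(-\infty,\la_m]$), then pass to the limit using monotone convergence. The paper's one-line proof leaves precisely these details implicit, and your careful handling of the boundary case $d_i=\la_\infty$ and the remark about $N_j=\infty$ are correct but routine.
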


\begin{proof}
Theorem \ref{exts} applies and yields inequality \eqref{exts1}. By letting $r\to \infty$ we obtain \eqref{exti1} by the monotone convergence theorem.
\end{proof}

\section{Necessity of interior majorization}\label{S4}

In this section we will show the necessity of the interior majorization in Theorem \ref{fullthm}. The first step in this two stage process is to establish the trace condition \eqref{fulltrace}. At the same time Theorem \ref{tt} shows that non-summability, i.e. $C(B/2)=\infty$ or $D(B/2)=\infty$, is the only option if more than two eigenvalues have infinite multiplicity.

\begin{thm}\label{tt} Let $E$ be a self-adjoint operator on $\mathcal H$ with the spectrum 
\[
\sigma(E)=\{A_{-m},\ldots,A_{n+p+1}\},
\]
where $m,n,p \in \N_0$ and $\{A_j\}_{j=-m}^{n+p+1}$ is an increasing sequence such that $A_0=0$ and $A_{n+1}=B$. Let $\{e_{i}\}_{i\in I}$ be an orthonormal basis for $\Hil$ and $d_{i}=\langle Ee_{i},e_{i}\rangle$. Assume that 
\begin{equation}\label{tt1}
N_{j}:=m_{E}(A_{j})<\infty \qquad\text{for all }j<0 \text{ and } j>n+1.
\end{equation}
Assume also that for some $0< \alpha<B$ both series 
\[
C(\alpha)=\sum_{d_{i}<\alpha}d_{i},\qquad D(\alpha)=\sum_{ d_i \ge \alpha}(B-d_{i})
\]
are absolutely convergent. Then, the following hold:
\begin{enumerate}
\item the series $C(\alpha)$ and $D(\alpha)$ are absolutely convergent for {\bf all} $0< \alpha<B$,
\item
the interior multiplicities are finite
\begin{equation}\label{tt3}
N_{j}=m_{E}(A_{j})<\infty \qquad\text{for all }  j=1,\ldots,n,
\end{equation}
\item
there exists $k=k(\alpha)\in\Z$ depending on $\alpha$ such that
\begin{equation}\label{tt4}
C(\alpha) -D(\alpha) =k B + \sum_{\genfrac{}{}{0pt}{}{j=-m}{j \not= 0,n+1}}^{n+p+1} N_{j}A_{j}.\end{equation}
\end{enumerate}

In addition, if we assume that $N_0=N_{n+1}=\infty$, then
\begin{equation}
\label{fullinf1}
|\{i\in I: d_i < \alpha \}| = |\{ i\in I: d_i \ge \alpha \}| = \infty.
\end{equation}
\end{thm}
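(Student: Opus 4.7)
The plan is to prove (i)--(iii) and the counting claim in order, with (iii) being the conceptual core: a reduction to Kadison's theorem via Remark \ref{rpart} applied to $P_{n+1}$. For (i), if $\beta \in (\alpha,B)$, each index $i$ in the band $\{\alpha \le d_i < \beta\}$ satisfies $B-d_i \ge B-\beta > 0$, so absolute convergence of $D(\alpha)$ forces this band to be finite. Hence $C(\beta) = C(\alpha) + \sum_{\alpha \le d_i < \beta} d_i$ is absolutely convergent and $D(\beta) \le D(\alpha) < \infty$; the case $\beta < \alpha$ is symmetric.

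For (ii), I would argue by contradiction: assume $N_{j_0}=\infty$ for some $1 \le j_0 \le n$, so $A_{j_0}\in(0,B)$. With $p_i^{(j)} := \langle P_j e_i, e_i\rangle$ and $d_i = \sum_j A_j p_i^{(j)}$, one has the pointwise inequality
\[
A_{j_0}\, p_i^{(j_0)} \le d_i + \max_{j<0}|A_j| \cdot \sum_{j<0} p_i^{(j)},
\]
which summed over $\{d_i < A_{j_0}/2\}$ bounds $\sum p_i^{(j_0)}$ there by a finite quantity (using that $\sum_i \sum_{j<0} p_i^{(j)} = \sum_{j<0} N_j < \infty$ by hypothesis). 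A symmetric bound handles $\{d_i \ge (A_{j_0}+B)/2\}$. Since $\sum_i p_i^{(j_0)} = N_{j_0} = \infty$, the middle band $\{A_{j_0}/2 \le d_i < (A_{j_0}+B)/2\}$ must contain infinitely many indices, each contributing at least $(B-A_{j_0})/2$ to $D(A_{j_0}/2)$; this gives $D(A_{j_0}/2)=\infty$, contradicting (i).

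For (iii), the key move is to decompose $E = B P_{n+1} + \tilde E$ with $\tilde E := \sum_{j \ne 0, n+1} A_j P_j$. By (ii), $\tilde E$ is finite rank, so its diagonal $t_i := \langle \tilde E e_i, e_i\rangle$ is absolutely summable with $\sum_i t_i = \operatorname{tr}(\tilde E) = \sum_{j\ne 0,n+1} A_j N_j$. Setting $q_i := \langle P_{n+1} e_i,e_i\rangle$ so that $d_i = Bq_i + t_i$, a direct rearrangement yields
\[
C(\alpha) - D(\alpha) = B \left[\sum_{d_i<\alpha} q_i - \sum_{d_i \ge \alpha}(1-q_i)\right] + \sum_{j \ne 0, n+1} A_j N_j.
\]
The identities $B \sum_{d_i<\alpha} q_i = C(\alpha) - \sum_{d_i<\alpha} t_i$ and $B \sum_{d_i \ge \alpha}(1-q_i) = D(\alpha) + \sum_{d_i \ge \alpha} t_i$ show both partial sums of $q_i$ above are finite. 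Hence Remark \ref{rpart} applied to the projection $P_{n+1}$ with the partition $I_0 = \{d_i < \alpha\}$, $I_1 = \{d_i \ge \alpha\}$ forces the bracket to be an integer $k$, giving \eqref{tt4}.

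For the counting claim under $N_0 = N_{n+1} = \infty$, suppose $|\{d_i \ge \alpha\}|$ were finite; then $\sum_i q_i < \infty$ (the $I_0$-part from the previous step, the $I_1$-part a finite sum of terms in $[0,1]$), contradicting $N_{n+1} = \infty$. Dually, using $1-q_i = p_i^{(0)} + r_i$ with $r_i := \sum_{j \ne 0, n+1} p_i^{(j)}$ summable by (ii), $N_0 = \infty$ gives $\sum_i (1-q_i) = \infty$; since $\sum_{I_1}(1-q_i) < \infty$, one must have $|I_0| = \infty$. The main subtlety is in (iii), where recognizing the $\alpha$-induced partition of $I$ as the right partition to apply Remark \ref{rpart} to the projection $P_{n+1}$ is the key insight; the finite-rank correction $\tilde E$ then absorbs neatly into the trace term.
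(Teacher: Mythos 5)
Your proposal is correct and follows the paper's central strategy: spectrally decompose $E$, establish finiteness of the interior multiplicities, and reduce \eqref{tt4} to Kadison's theorem by applying Remark \ref{rpart} to the projection $P_{n+1}$ with the partition $I_0=\{i:d_i<\alpha\}$, $I_1=\{i:d_i\ge\alpha\}$. The one genuine difference is in part (ii): you argue by contradiction, observing that if some interior $N_{j_0}=\infty$ then (after controlling the tails using the hypothesis on the exterior multiplicities) infinitely many $d_i$ must fall in the compact band $[A_{j_0}/2,(A_{j_0}+B)/2]\subset(0,B)$, which forces $D(A_{j_0}/2)=\infty$ and contradicts (i). The paper instead argues directly, reading off $\sum_{i\in I_0}p_i^{(j)}<\infty$ for $j=1,\ldots,n+1$ and $\sum_{i\in I_1}p_i^{(j)}<\infty$ for $j=1,\ldots,n$ from the nonnegativity of the summands making up $\tilde d_i=\sum_{j=1}^{n+1}A_jp_i^{(j)}$ and $B-\tilde d_i$. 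The paper's direct version has the mild advantage that the finiteness of $\sum_{I_0}p_i^{(n+1)}$ and $\sum_{I_1}(1-p_i^{(n+1)})$ --- precisely what Remark \ref{rpart} needs in step (iii) --- emerges as a byproduct; your version recovers the same facts cleanly from the identities $Bq_i=d_i-t_i$ and $B(1-q_i)=(B-d_i)+t_i$, so the total effort is comparable. Your arguments for (i) and for the counting claim under $N_0=N_{n+1}=\infty$ coincide with the paper's.
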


\begin{proof}
By the spectral decomposition, we can write
\[
E= \sum_{j=-m}^{n+p+1} A_{j} P_j,
\]
where $P_j$'s are mutually orthogonal projections satisfying $\sum_{j=-m}^{n+p+1}P_j={\mathbf I}$. Let $p^{(j)}_{i}=\langle P_je_{i},e_{i}\rangle$ be the diagonal of $P_j$. Hence, we have
\begin{equation}\label{tt7}
\sum_{j=-m}^{n+p+1} p_i^{(j)} =1 \qquad\text{for all }i \in I.
\end{equation}

For convenience, we let
\begin{equation}\label{tt9}
\begin{aligned}
q_i^{(1)}&=\sum_{j=-m}^{-1} A_j p_i^{(j)} \le 0, \qquad
q_i^{(2)}=\sum_{j=n+2}^{n+p+1} A_j p_i^{(j)} \ge 0,
\\
\tilde d_i & = d_i - q_i^{(1)} - q_i^{(2)} = \sum_{j=1}^{n+1} A_j p_i^{(j)}\in [0,B].
\end{aligned}
\end{equation}
By \eqref{tt1}, the following two series are convergent
\begin{equation}\label{tt8}
\sum_{i\in I} q_i^{(1)} = \sum_{j=-m}^{-1} N_j A_j > -\infty,
\qquad
\sum_{i\in I} q_i^{(2)} = \sum_{j=n+2}^{n+p+1} N_j A_j <\infty.
\end{equation}

For convenience we let $I_0=\{i\in I: d_{i}<\alpha\}$ and $I_1=\{i\in I: d_{i} \ge \alpha\}$. Since the series defining $C(\alpha)$ and $D(\alpha)$ are absolutely convergent, any interval $[\epsilon,B-\epsilon]$, $\epsilon>0$ may contain only finitely many $d_i$'s. Thus, $C(\alpha)$ and $D(\alpha)$ are absolutely convergent for all $0<\alpha<B$, which justifies (i). Moreover, by \eqref{tt9} and \eqref{tt8} the following two series are convergent
\begin{equation}\label{tt6}
\tilde C = \sum_{i\in I_0} \tilde d_i <\infty, \qquad \tilde D = \sum_{i\in I_1} (B- \tilde d_i)<\infty.
\end{equation}
By \eqref{tt9} and \eqref{tt6} we have
\begin{equation}\label{tt11}
\sum_{i\in I_0} p_i^{(j)} < \infty \qquad\text{for }j=1,\ldots,n+1.
\end{equation}
By \eqref{tt7} and \eqref{tt9} we have
\[
B - \tilde d_i 
= B\bigg(1 -  \sum_{j=1}^{n+1} p_i^{(j)}\bigg) +  \sum_{j=1}^{n+1} (B-A_j) p_i^{(j)} \ge
\sum_{j=1}^{n} (B-A_j) p_i^{(j)}.
\]
Summing the above inequality over $i\in I_1$, \eqref{tt6} yields
\begin{equation}\label{tt12} 
\sum_{i\in I_1} p_i^{(j)} < \infty \quad\text{for }j=1,\ldots,n.
\end{equation}
Thus, using \eqref{tt6} again and the identity $B-\tilde d_i = B(1-p_i^{(n+1)})-\sum_{j=1}^n A_j p_i^{(j)}$ we also have
\begin{equation}\label{tt12a}
\sum_{i\in I_1} (1-p_i^{(n+1)}) < \infty.
\end{equation}
Combining \eqref{tt11} and \eqref{tt12} proves (ii), i.e.,
\begin{equation}\label{tt13}
N_j = \sum_{i \in I} p_i^{(j)} < \infty \qquad j=1,\ldots,n.
\end{equation}

By \eqref{tt11} and \eqref{tt12a} and Remark \ref{rpart} we can apply Theorem \ref{Kadison} to the projection $P_{n+1}$ to deduce that
\[
k:=\sum_{i\in I_0} p_i^{(n+1)}-\sum_{i\in I_1} (1-p_i^{(n+1)}) \in \Z.
\]
Thus,
\[
\begin{aligned}
\tilde C - \tilde D &= \sum_{i\in I_0}  \bigg(Bp_i^{(n+1)} +  \sum_{j=1}^{n} A_j p_i^{(j)}  \bigg) - \sum_{i\in I_1} \bigg( B- Bp_i^{(n+1)}  - \sum_{j=1}^{n} A_j p_i^{(j)} \bigg) 
\\
&= Bk + \sum_{j=1}^n N_j A_j.
\end{aligned}
\]
Therefore, by \eqref{tt9} and \eqref{tt8} we have
\begin{equation}\label{tt15}
\begin{aligned}
&C(\alpha)-D(\alpha)
=\sum_{i\in I_0}d_{i}- \sum_{i \in I_1}(B-d_{i})
\\
&=
\sum_{i\in I_0}(\tilde d_{i} + q_i^{(1)} + q_i^{(2)}) - \sum_{i \in I_1}(B-\tilde d_{i} - q_i^{(1)} - q_i^{(2)})=Bk + \sum_{\genfrac{}{}{0pt}{}{j=-m}{j \not= 0,n+1}}^{n+p+1} N_{j}A_{j}.
\end{aligned}
\end{equation}
This shows \eqref{tt4} completing the proof of (i)--(iii) of Theorem \ref{tt}.

In addition, assume that $N_0=N_{n+1}=\infty$. It remains to show \eqref{fullinf1}. On the contrary, suppose that there are only finitely many $i\in I$ such that $d_i \ge \alpha$.
 Combining this with the assumption the series defining $C(\alpha)$ is absolutely convergent implies that $\sum_{i\in I} d_i$ is also absolutely convergent. By \eqref{tt9} and \eqref{tt8}, the series $\sum_{i\in I} \tilde d_i$ is convergent. Since $\tilde d_i = \sum_{j=1}^{n+1} A_j p_i^{(j)}$, by \eqref{tt13} the series $\sum_{i \in I} p^{(n+1)}_i$ is also convergent. This implies that the projection $P_{n+1}$ has finite rank which contradicts $N_{n+1}=\infty$. A similar argument shows that if there are only finitely many $i\in I$ such that $d_i < \alpha$, then the series $\sum_{i\in I} (B-d_i)$ converges absolutely, and hence $\sum_{i \in I} (1-p^{(n+1)}_i)$ is convergent. This implies that the rank of ${\mathbf I}-P_{n+1}$ is finite which contradicts the hypothesis that $N_0=\infty$.
\end{proof}

Once Theorem \ref{tt} is established it is now convenient to formalize the concept of interior majorization with the following definition.

\begin{defn}\label{mar}
Let $m,n,p \in \N_0$, $n\ge 1$, and let $\{A_j\}_{j=-m}^{n+p+1}$ be an increasing sequence such that $A_0=0$ and $A_{n+1}=B$. Let $\{N_j\}_{j=-m}^{n+p+1}$ be a sequence in $\N$ with the possible exception of $j=0$ and $j=n+1$, where $N_0$ and $N_{n+1}$ could also $=\infty$.
Let $\{d_i\}_{i\in I}$ be a sequence  in $\R$. Let $C(\alpha)$ and $D(\alpha)$ be as in Definition \ref{conv} with the convention that $C(\alpha)$ or $D(\alpha)=\infty$ if the corresponding series is not absolutely convergent. 

We say that $\{d_i\}$ satisfies {\it interior majorization} by $\{(A_j,N_j)\}_{j=-m}^{n+p+1}$ if the following 3 conditions hold:
\begin{enumerate}
\item
$C(B/2)<\infty$ and $D(B/2)<\infty$, and thus $C(\alpha)<\infty$ and $D(\alpha)< \infty$ for all $\alpha \in (0,B)$,
\item
there exists $k_0\in \Z$ such that
\begin{equation}
\label{fulltrace1} 
C(A_n) -D(A_n) =  \sum_{\genfrac{}{}{0pt}{}{j=-m}{j \not= 0,n+1}}^{n+p+1} A_j N_j +k_0 B,
\end{equation}
\item 
for all $r=1,\ldots,n$,
\begin{equation}
\label{fullmaj1} 
C(A_r) \ge\sum_{\genfrac{}{}{0pt}{}{j=-m}{j \not= 0}}^{r} A_j N_j + A_r \bigg( k_0 - |\{i\in I: A_r \le d_i < A_n \}|+\sum_{\genfrac{}{}{0pt}{}{j=r+1}{j\ne n+1}}^{n+p+1} N_j  \bigg).
\end{equation}
\end{enumerate}
\end{defn}

\begin{remark}
Despite initial appearance the interior majorization conditions \eqref{fulltrace1} and \eqref{fullmaj1} are equivalent with  \eqref{fulltrace} and \eqref{fullmaj} in Theorem \ref{fullthm}. Indeed, since the quantity $C(\alpha)-D(\alpha)$ remains constant modulo $B$ for all $\alpha \in (0,B)$, \eqref{fulltrace1} is equivalent to the statement that there exists $k=k(\alpha) \in \Z$ such that
\begin{equation}
\label{fulltrace2} 
C(\alpha) -D(\alpha) =  \sum_{\genfrac{}{}{0pt}{}{j=-m}{j \not= 0,n+1}}^{n+p+1} A_j N_j +k(\alpha) B,
\end{equation}
Fix $\alpha = A_r$, where $r=1, \ldots, n$. Since
$
k_0 - |\{i\in I: A_r \le d_i < A_n \}| = k(\alpha)
$,
\eqref{fullmaj1} can be rewritten as 
\begin{equation}
\label{fullmaj2} 
C(\alpha) \ge\sum_{\genfrac{}{}{0pt}{}{j=-m}{j \not= 0}}^{r} A_j N_j + \alpha \bigg( k(\alpha)+\sum_{\genfrac{}{}{0pt}{}{j=r+1}{j\ne n+1}}^{n+p+1} N_j  \bigg).
\end{equation}
Using \eqref{fulltrace2}, we can remove the presence of $k=k(\alpha)$ in \eqref{fullmaj2} to obtain
\begin{equation}
\label{fullmaj3} 
(B-\alpha)C(\alpha) +\alpha D(\alpha)  \ge
(B-\alpha)\sum_{\genfrac{}{}{0pt}{}{j=-m}{j \not= 0}}^{r} A_j N_j + \alpha \sum_{\genfrac{}{}{0pt}{}{j=r+1}{j \ne n+1}}^{n+p+1} (B-A_j)N_j.
\end{equation}
This is precisely \eqref{fullmaj} and the above process is reversible. Observe also that the value of $N_0$ and $N_{n+1}$ is irrelevant in Definition \ref{mar}. However, the most interesting case of interior majorization occurs when there are exactly two eigenvalues with infinite multiplicities $N_0=N_{n+1}=\infty$.
\end{remark}

We are now ready to establish the necessity of the interior majorization.

\begin{thm}\label{int-nec} Let $E$ be a self-adjoint operator on $\mathcal H$ satisfying the assumptions of Theorem \ref{tt}.  
Then, its diagonal
$\{d_i\}_{i\in I}$ satisfies interior majorization by $\{(A_j,N_j)\}_{j=-m}^{n+p+1}$.
\end{thm}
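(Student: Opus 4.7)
Conditions (i) and (ii) of Definition \ref{mar} are immediate consequences of Theorem \ref{tt}: (i) is Theorem \ref{tt}(i), and the trace identity \eqref{fulltrace1} is \eqref{tt4} evaluated at $\alpha=A_n$, since (as explained in the remark following Definition \ref{mar}) $C(\alpha)-D(\alpha)$ is constant modulo $B$ in $\alpha$. The entire task therefore reduces to establishing the majorization inequalities \eqref{fullmaj} for $r=1,\ldots,n$.

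The approach is to invoke the spectral decomposition $E=\sum_{j=-m}^{n+p+1}A_jP_j$, set $p_i^{(j)}=\langle P_je_i,e_i\rangle$, fix $r$, partition $I=I_0\cup I_1$ with $I_0=\{i:d_i<A_r\}$ and $I_1=\{i:d_i\ge A_r\}$, and define $\mu_j^{(s)}:=\sum_{i\in I_s}p_i^{(j)}$. Substituting $d_i=\sum_j A_jp_i^{(j)}$ and $B-d_i=\sum_j(B-A_j)p_i^{(j)}$ (the latter via $\sum_jp_i^{(j)}=1$) and interchanging the order of summation one would obtain the clean identity
\[
(B-A_r)C(A_r)+A_rD(A_r)=\sum_j\bigl[(B-A_r)A_j\mu_j^{(0)}+A_r(B-A_j)\mu_j^{(1)}\bigr].
\]
The Fubini step is legitimate because $C(A_r)$ and $D(A_r)$ are absolutely convergent by Theorem \ref{tt}(i), and the partial traces entering the computation are finite: $N_j<\infty$ for $j\ne 0,n+1$ by Theorem \ref{tt}(ii); $\mu_j^{(0)}<\infty$ for $j=1,\dots,n+1$ by \eqref{tt11}; and $\mu_0^{(1)}\le\sum_{i\in I_1}(1-p_i^{(n+1)})<\infty$ by $\sum_jp_i^{(j)}=1$ combined with \eqref{tt12a}.

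The remainder is an elementary term-by-term comparison with the right-hand side of \eqref{fullmaj}, using $N_j=\mu_j^{(0)}+\mu_j^{(1)}$ whenever both are finite. For $j\le r$, $j\ne 0$, substituting $\mu_j^{(0)}=N_j-\mu_j^{(1)}$ rewrites the $j$-th LHS summand as $(B-A_r)A_jN_j+\mu_j^{(1)}B(A_r-A_j)$, with a non-negative residue because $A_r\ge A_j$. For $j>r$, $j\ne n+1$, substituting $\mu_j^{(1)}=N_j-\mu_j^{(0)}$ yields $A_r(B-A_j)N_j+\mu_j^{(0)}B(A_j-A_r)$, again with a non-negative residue. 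The excluded indices $j=0$ and $j=n+1$ contribute nothing to the RHS but contribute the non-negative quantities $A_rB\mu_0^{(1)}$ and $(B-A_r)B\mu_{n+1}^{(0)}$ to the LHS. Summing over $j$ yields \eqref{fullmaj}. The only conceptual subtlety I anticipate is the case where $N_0$ or $N_{n+1}$ is infinite, but this is harmless precisely because the coefficients $A_0=0$ and $B-A_{n+1}=0$ annihilate the potentially infinite partial traces $\mu_0^{(0)}$ and $\mu_{n+1}^{(1)}$, leaving only the finite quantities $\mu_0^{(1)}$ and $\mu_{n+1}^{(0)}$ in play.
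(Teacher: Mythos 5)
Your proof is correct and rests on the same backbone as the paper's: spectral decomposition $E=\sum_j A_jP_j$, the finiteness facts from Theorem \ref{tt} (interior multiplicities finite, the partial sums $\mu_j^{(0)},\mu_j^{(1)}$ finite where needed), and a Fubini swap followed by a comparison with $N_j=\mu_j^{(0)}+\mu_j^{(1)}$. The difference is one of packaging: the paper establishes the $k$-dependent form \eqref{fullmaj1} and reduces to showing an inequality whose left side is manifestly $\ge 0$ and whose right side is dominated by $A_r\sum_{i\in I_1}(\sum_jp_i^{(j)}-1)=0$, whereas you attack the equivalent $k$-free inequality \eqref{fullmaj} directly and isolate a non-negative residue $\mu_j^{(1)}B(A_r-A_j)$ (resp.\ $\mu_j^{(0)}B(A_j-A_r)$) for each $j$, plus the two boundary terms $A_rB\mu_0^{(1)}$ and $(B-A_r)B\mu_{n+1}^{(0)}$. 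Your version avoids the detour through $k(\alpha)$ and makes the slack in the inequality completely explicit term by term, which is a modest simplification; you are right that the coefficients $A_0=0$ and $B-A_{n+1}=0$ are what neutralize the possibly infinite sums $\mu_0^{(0)}$ and $\mu_{n+1}^{(1)}$, so no indeterminate forms arise. One small presentational point: condition (iii) in Definition \ref{mar} is literally \eqref{fullmaj1}, so you are relying on the remark after Definition \ref{mar} not only for the trace identity but also to pass from \eqref{fullmaj} to \eqref{fullmaj1}; it would be worth stating that dependence explicitly.
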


\begin{proof} We shall use the same notation as in the proof of Theorem \ref{tt}. The condition \eqref{tt3} guarantees that the sequence $\{N_j\}_{j=-m}^{n+p+1}$ fulfills the requirements of Definition \ref{mar}. Moreover, by letting $\alpha=A_n$, \eqref{tt4} yields the  trace condition \eqref{fulltrace1}. Thus, it remains to show the interior majorization inequality \eqref{fullmaj1}.

As in the proof of Theorem \ref{tt} we write
\[
E= \sum_{j=-m}^{n+p+1} A_{j} P_j.
\]
Since each orthogonal projection $P_j$ has rank $N_j$, by \eqref{tt1} and \eqref{tt3} we have
\begin{equation}\label{int2}
\sum_{i\in I}p^{(j)}_{i} = N_j<\infty
\qquad\text{for all } j = -m,\ldots, n+p+1, \ j \ne 0, n+1.
\end{equation}
For convenience we let $q_i=p_i^{(n+1)}$. Then, by \eqref{tt9} we have
\begin{equation}\label{int2a}
d_i =\sum_{\genfrac{}{}{0pt}{}{j=-m}{j \not= 0, n+1}}^{n+p+1} A_j  p_i^{(j)} + B q_i.
\end{equation}
Moreover, by \eqref{tt15} we have
\begin{equation}\label{int3}
\begin{aligned}
C(\alpha)-D(\alpha) = Bk(\alpha) + \sum_{\genfrac{}{}{0pt}{}{j=-m}{j \not= 0, n+1}}^{n+p+1}  A_{j} N_j,
\qquad\text{where }
k(\alpha)=\sum_{i\in I_0} q_i-\sum_{i\in I_1} (1-q_i) \in \Z.
 \end{aligned}
\end{equation}
Here, $I_0=\{i\in I: d_{i}<\alpha\}$ and $I_1=\{i\in I: d_{i} \ge \alpha\}$. By letting $\alpha=A_n$, we deduce that $k_0$ in \eqref{fulltrace1} must equal $k(A_n)$. 

Fix $r=1,\ldots,n$, and let $\alpha= A_r$. Then,
\[
\begin{aligned}
k_0
- |\{i\in I: A_r \le d_i < A_n \}|
& = \sum_{d_i < A_n} q_i - \sum_{d_i \ge A_n} (1-q_i) - |\{i\in I: A_r \le d_i < A_n \}|  
\\
& =\sum_{i\in I_0} q_i - \sum_{i\in I_1} (1-q_i) =k(A_r).
\end{aligned}
\]
Thus, by \eqref{int2a} the required majorization \eqref{fullmaj1} is equivalent to
\begin{equation}\label{int4}
\sum_{i\in I_0}\bigg( \sum_{\genfrac{}{}{0pt}{}{j=-m}{j \not= 0, n+1}}^{n+p+1} 
 A_j p_i^{(j)} +
 Bq_i \bigg) \ge \sum_{\genfrac{}{}{0pt}{}{j=-m}{j \ne 0}}^{r}A_{j} N_j + A_{r} \bigg(\sum_{i\in I_0} q_i - \sum_{i\in I_1} (1-q_i)  + \sum_{\genfrac{}{}{0pt}{}{j=r+1}{j \ne n+1}}^{n+p+1}N_{j}\bigg).
\end{equation}
By \eqref{int2}, we have for $j \ne 0,n+1$,
\[
N_j = \sum_{i\in I_0} p_i^{(j)} + \sum_{i\in I_1} p_i^{(j)}.
\]
Thus, \eqref{int4} can be rewritten as
\begin{equation}\label{int5}
\sum_{i\in I_0}\bigg( \sum_{\genfrac{}{}{0pt}{}{j=r+1}{j \not=n+1}}^{n+p+1} (A_j -A_r) p_i^{(j)} + (B-A_r)q_i \bigg) 
\ge \sum_{i\in I_1} \bigg( 
 \sum_{\genfrac{}{}{0pt}{}{j=-m}{j \ne 0}}^{r}A_{j} p_i^{(j)}+ 
 \sum_{\genfrac{}{}{0pt}{}{j=r+1}{j \ne n+1}}^{n+p+1} A_r p_i^{(j)}
 + A_{r} (q_i-1)  \bigg).
\end{equation}
Since $\{A_j\}$ is an increasing sequence, the left hand side of \eqref{int5} is $\ge 0$. On the other hand, the right hand side of \eqref{int5} is $\le 0$ as it is dominated by
\[
\sum_{i\in I_1} \bigg( 
 \sum_{\genfrac{}{}{0pt}{}{j=-m}{j \ne 0, n+1}}^{n+p+1}A_r p_i^{(j)}+ A_{r} q_i-A_r  \bigg) \le A_r \sum_{i\in I_1} \bigg( 
 \sum_{j=-m}^{n+p+1} p_i^{(j)} -1  \bigg) = 0.
\]
In the last step we used \eqref{tt7}. This shows \eqref{int5}, which implies \eqref{int4}, thus proving \eqref{fullmaj1}. This completes the proof of Theorem \ref{int-nec}.
\end{proof}

\begin{proof}[Proof of necessity of Theorem \ref{fullthm}(ii)]
Suppose that $E$ is a self-adjoint operator with diagonal $\{d_{i}\}_{i\in I}$ and the eigenvalue-multiplicity list $\{(A_j,N_j)\}_{j=-m}^{n+p+1}$ as in Definition \ref{conv}. Suppose first that $N_j$ takes the value of $\infty$ more than twice. That is, $N_j=\infty$ for some $1 \le j \le n$ in addition to $N_0=N_{n+1}=\infty$. Since \eqref{tt3} fails, by the contrapositive of Theorem \ref{tt}, we must necessarily have that
\[
\sum_{0<d_{i}<B/2}d_{i}=\infty,\qquad\text{or}\qquad \sum_{B/2\le d_i <B }(B-d_{i})=\infty.
\]
Thus, we have the non-summability scenario. 

Suppose next that $N_j$ takes the value of $\infty$ exactly twice. If the non-summability happens, then there is nothing to prove. Thus, it remains to consider the case when $C(B/2)<\infty$ and $D(B/2)<\infty$. Theorem \ref{tt} shows that \eqref{fullinf1} holds. In the case when $n\ge 1$, Theorem \ref{int-nec} shows $\{d_i\}_{i\in I}$ satisfies interior majorization by $\{(A_j,N_j)\}_{j=-m}^{n+p+1}$. Finally, in the case when $n=0$, Theorem \ref{tt} alone yields the required conclusion in Theorem \ref{fullthm}(ii).
\end{proof}

\section{Sufficiency of interior majorization}

The goal of this section is to show the sufficiency of the interior majorization in the case when the two outermost eigenvalues have infinite multiplicities. This corresponds to the case when $m=0$ and $p=0$ in Definition \ref{conv}, and thus exterior majorization is not present. To achieve this we shall introduce an alternative variant of interior majorization which works in the crucial case when $\{d_{i}\}$ can be indexed in nondecreasing order by $\Z$.

\begin{defn}\label{rim}
Suppose that $n\in \N$ and $\{A_{j}\}_{j=0}^{n+1}$ is an increasing sequence in $\R$ such that $A_0=0$ and $A_{n+1}=B$. Suppose $\{N_{j}\}_{j=1}^{n}$ is a sequence in $\N$ and $N_{0}=N_{j+1}=\infty$.  Define
\begin{equation}\label{rmaj0}\lambda_{i} = \begin{cases} 0 & i\leq 0\\ A_{r} & 1+\sum_{j=1}^{r-1}N_{j}\leq i\leq \sum_{j=1}^{r}N_{j}\\ B & i>\sum_{i=1}^{n}N_{j}.\end{cases}\end{equation}
Let $\{d_{i}\}_{i\in\Z}$ be a nondecreasing sequence in $[0,B]$ such that $\sum_{i=-\infty}^0 d_i <\infty$.
We say that $\{d_i\}$ satisfies {\it Riemann interior majorization} by $\{(A_{j},N_{j})\}_{j=0}^{n+1}$ if there exists $k\in\Z$ such that the following two hold
\begin{align}
\label{rmaj1}\delta_{m}:=\sum_{i=-\infty}^{m}(d_{i-k}-\lambda_{i}) &\geq  0\qquad \text{for all }m\in\Z,
\\
\label{rmaj2}\lim_{m\to\infty}\delta_{m} & = 0.
\end{align}
\end{defn}

To distinguish between two distinct types of interior majorization we shall frequently refer to the concept introduced in Definition \ref{mar} as Lebesgue interior majorization. This is done purposefully as an analogy between Riemann and Lebesgue integrals. 
Theorem \ref{eqmajs} shows the equivalence of the concepts of Riemann and Lebesgue interior majorization for nondecreasing sequences.

\begin{thm}\label{eqmajs} Let $\{(A_j,N_j)\}_{j=0}^{n+1}$ be as in Definition \ref{rim}. Let $\{d_{i}\}_{i\in\Z}$ be a nondecreasing sequence in $[0,B]$.
Then, the sequence $\{d_{i}\}$ satisfies interior majorization by $\{(A_j,N_j)\}_{j=0}^{n+1}$ if and only if $\{d_{i}\}$ satisfies Riemann interior majorization by $\{(A_j,N_j)\}_{j=0}^{n+1}$.
\end{thm}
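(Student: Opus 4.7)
The plan is to exhibit the bijection $k_0=k+L_n-N_0$, where $N_0:=\sum_{j=1}^n N_j$ and $L_r:=\max\{i\in\Z:d_i<A_r\}$ for each $r=1,\ldots,n$, and to translate each condition under this correspondence. Under either hypothesis the nondecreasing sequence $\{d_i\}$ satisfies $\sum_{i\le 0}d_i<\infty$ and $\sum_{i\ge 1}(B-d_i)<\infty$ (the first is given in Definition~\ref{rim} while the second follows from $\delta_m\to 0$; the Lebesgue hypothesis (i) supplies both). In particular each $L_r$ is a finite integer, and $C(A_r)=\sum_{i\le L_r}d_i$, $D(A_r)=\sum_{i>L_r}(B-d_i)$. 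A telescoping identity (valid regardless of the sign of $L_n$) gives
\[
C(A_n)-D(A_n)=\bigg(\sum_{i\le 0}d_i-\sum_{i\ge 1}(B-d_i)\bigg)+L_n B,
\]
while for $M>N_0$ a direct computation of the Riemann partial sum yields
\[
\delta_M=\bigg(\sum_{i\le 0}d_i-\sum_{i\ge 1}(B-d_i)\bigg)+(N_0-k)B-\sum_{j=1}^{n}A_j N_j+o(1)\quad\text{as }M\to\infty.
\]
Equating $\lim_M\delta_M=0$ with \eqref{fulltrace1} yields exactly $k_0=k+L_n-N_0$, so the trace conditions are equivalent.

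The core of the proof is the majorization. Write $M_r:=\sum_{j=1}^r N_j$ with $M_0:=0$. The key observation is that $\delta_M-\delta_{M-1}=d_{M-k}-\lambda_M$ is nondecreasing in $M$ on each block $(M_{r-1},M_r]$ since $\lambda_M\equiv A_r$ there and $d_{\cdot-k}$ is nondecreasing. Hence $\delta$ is discretely convex on each such block, nondecreasing on $(-\infty,0]$, and nonincreasing on $(N_0,\infty)$; combined with $\delta_{-\infty}=\delta_{+\infty}=0$, the global condition $\delta_M\ge 0$ reduces to checking the minimum of $\delta$ on each $A_r$ block. By convexity this minimum is attained at $M=L_r+k$ if $L_r+k\in(M_{r-1},M_r]$, at $M=M_{r-1}$ if $L_r+k\le M_{r-1}$, and at $M=M_r$ if $L_r+k>M_r$. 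A direct algebraic expansion using $\sum_{j=1}^r A_jN_j=\sum_{j=1}^{r-1}A_jN_j+A_r(M_r-M_{r-1})$ shows that under the change of variable the Lebesgue $r$th inequality \eqref{fullmaj1} is equivalent to
\[
\sum_{i\le L_r}d_i\ge\sum_{j=1}^{r-1}A_j N_j+A_r(L_r+k-M_{r-1}),
\]
which is precisely $\delta_{L_r+k}\ge 0$ in the in-block case.

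To finish I would handle the two boundary subcases in both directions. For Lebesgue $\Rightarrow$ Riemann: when $L_r+k>M_r$, the strict estimate $\sum_{M_r-k<i\le L_r}d_i<A_r(L_r+k-M_r)$ (since each term is $<A_r$) combined with Lebesgue's $r$th yields $\delta_{M_r}>0$; symmetrically, when $L_r+k\le M_{r-1}$, the estimate $\sum_{L_r<i\le M_{r-1}-k}d_i\ge A_r(M_{r-1}-k-L_r)$ (since each term is $\ge A_r$) yields $\delta_{M_{r-1}}\ge 0$. Conversely, for Riemann $\Rightarrow$ Lebesgue, the identities
\[
\sum_{i\le L_r+k}\lambda_i-\bigg(\sum_{j=1}^{r-1}A_jN_j+A_r(L_r+k-M_{r-1})\bigg)=\begin{cases}\sum_{i=M_r+1}^{L_r+k}(\lambda_i-A_r)&\text{if }L_r+k>M_r,\\ \sum_{i=L_r+k+1}^{M_{r-1}}(A_r-\lambda_i)&\text{if }L_r+k<M_{r-1},\end{cases}
\]
are manifestly nonnegative (each summand is positive in its range by choice of block and since $A_j\ne A_r$ for $j\ne r$), so $\delta_{L_r+k}\ge 0$ still implies Lebesgue's $r$th inequality. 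The main obstacle will be the careful sign-keeping through these three subcases in both directions of the equivalence, together with the minor adjustments when $L_r+k\le 0$ (so $T_{L_r+k}=0$) or $L_r+k>N_0$ (so $L_r+k$ lies in the $B$-region past all blocks).
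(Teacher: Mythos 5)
Your argument is correct and matches the paper's in its essential combinatorics: both identify the change of variable $k_0 = k + m_n - \sigma_n$ (your $k_0=k+L_n-N_0$, the paper's $k=k_0+\sigma_n-m_n$), both show that inside the block $(\sigma_{r-1},\sigma_r]$ the partial sum $\delta$ bottoms out at $m_r+k$ (your $L_r+k$), and both supply the same boundary estimates (each $d_i<A_r$ for $i\le L_r$ resp.\ $d_i\ge A_r$ for $i>L_r$) when that index lies outside the block. What you do differently is replace the paper's induction over blocks with a single discrete-convexity observation: $\delta_M-\delta_{M-1}=d_{M-k}-\lambda_M$ is nondecreasing on each block because $\lambda_M$ is constant there, so $\delta$ is piecewise convex; together with $\delta$ nondecreasing on $(-\infty,0]$, nonincreasing on $(N_0,\infty)$, and $\delta_{\pm\infty}=0$, the global condition $\delta\ge0$ reduces to checking block minima. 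This is a cleaner organizing lens — it makes transparent \emph{why} the minimizing index is $L_r+k$, which the paper arrives at via a slightly more ad hoc case split — and it lets you handle the ``left-of-block'' case ($L_r+k\le M_{r-1}$) by a direct estimate rather than the paper's appeal to the inductive hypothesis for $r-1$. The computations you would carry out (the telescoping trace identity, the three subcases in each direction, and the end behaviour at $M\le 0$ and $M>N_0$) are otherwise the same as in the paper; I checked your key identity for $\delta_{L_r+k}$, the boundary inequalities $\sum_{M_r-k<i\le L_r}d_i<A_r(L_r+k-M_r)$ and $\sum_{L_r<i\le M_{r-1}-k}d_i\ge A_r(M_{r-1}-k-L_r)$, and the two displayed $\lambda$-identities for the converse, and they are all right.
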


\begin{proof} Without loss of generality we may assume that $d_{i}<A_{1}$ $\iff$ $i\leq 0$. We will establish the following notation to be used in the proof. For $r=1,\ldots,n$, we set
\[
m_{r} = |\{i \in I: A_1 \le d_i<A_r\}|, \qquad \sigma_{r} = \sum_{j=1}^{r}N_{j}, \qquad \sigma_0=0.
\]
Note that for any $r=1,\ldots,n$, we have
\begin{align}
\label{rmaja} d_i < A_r & \iff  i \le m_r, \\
\label{rmajb} \la_i = A_r & \iff \sigma_{r-1} +1 \le i \le \sigma_r.
\end{align}
Therefore, we have
\begin{equation}\label{rmaj3}
C(A_r) = \sum_{i=-\infty}^{m_r} d_{i},\qquad 
D(A_r) = \sum_{i=m_{r}+1}^{\infty}(B-d_{i}).
\end{equation}

First, we assume that $\{d_{i}\}$ satisfies interior majorization as in Definition \ref{mar}. From the assumption that $C(B/2)<\infty$ we have $\sum_{i=-\infty}^{0}d_{i}=C(A_{1})<\infty$.

Let $k_{0}\in\Z$ be as in Definition \ref{mar}. We will show that \eqref{rmaj1} and \eqref{rmaj2} hold with $k=k_{0}+\sigma_{n}-m_{n}$. 

For $m>\sigma_n$
\begin{align}\label{rmaj4}
\delta_{m} & = \sum_{i=-\infty}^{m}(d_{i-k} - \lambda_{i}) 
= \sum_{i=-\infty}^{m}d_{i-k} - \sum_{i=1}^{m}\lambda_{i} = \sum_{i=-\infty}^{m-k}d_{i} -\sum_{j=1}^n A_j N_j - (m-\sigma_n)B.
\end{align}
Using \eqref{rmaj3}, for $m>m_n+k$
\begin{equation}\label{rmaj5}
\begin{aligned}
 \sum_{i=-\infty}^{m-k}d_{i} & = \sum_{i=-\infty}^{m_n} d_i - \sum_{i=m_n+1}^{m-k} (B-d_i-B) 
 \\
&= C(A_n) -D(A_n) + B(m-k-m_n) + \sum_{i=m-k+1}^\infty (B-d_i).
\end{aligned}
 \end{equation}
For $m>\max\{\sigma_{n},m_{n}+k\}$, combining  \eqref{fulltrace1}, \eqref{rmaj4}, and \eqref{rmaj5} yields
\[
\delta_m = k_0 B - (m-\sigma_n)B + B(m-k-m_n) +  \sum_{i=m-k+1}^\infty (B-d_i) = \sum_{i=m-k+1}^\infty (B-d_i).
\]
Since this series is convergent, we have $\delta_m  \to 0$ as $m\to \infty$. This establishes \eqref{rmaj2}.

To complete this direction of the proof we must show that $\delta_{m}\geq 0$ for all $m\in\Z$. Since $d_i \ge 0 = \la_i$ for $i\le 0$, we have $\delta_{m}\geq 0$ for $m\leq 0$. Moreover, since $d_{i-k}-\lambda_{i}\leq 0$ for all $i>\sigma_{n}$, and $\delta_{m}\to 0$ as $m\to\infty$, this implies $\delta_{m}\geq 0$ for all $m>\sigma_{n}$. Note that in the current notation, interior majorization inequality \eqref{fullmaj1} takes the following form
\begin{equation}\label{rmaj6} \sum_{i=-\infty}^{m_{r}+k}d_{i-k}\geq \sum_{i=1}^{\sigma_{r}}\lambda_{i} + (k+m_{r}-\sigma_{r})A_{r}
\qquad\text{for } r=1,\ldots,n.\end{equation}

We will prove by induction on $r=0,\ldots,n$ that $\delta_m \ge 0$ for $m=0,1,\ldots, \sigma_r$. The base case $r=0$ was shown above. 
Assume the inductive hypothesis is true for $r-1$, where $1\le r \le n$. We will show that that $\delta_{m}\geq 0$ for all $m = \sigma_{r-1}+1,\ldots,\sigma_{r}$. There are two cases to consider.

{\bf Case 1.} Assume that $m_{r}+k\leq \sigma_{r}$. First we will show that $\delta_{m_{r}+k}\geq 0$. If $m_{r}+k\leq \sigma_{r-1}$, then the inductive hypothesis implies that $\delta_{m_{r}+k}\geq 0$, so we may assume $\sigma_{r-1}+1\leq m_{r}+k \leq \sigma_{r}$. Using \eqref{rmaj6} and then \eqref{rmajb}
\begin{align*}
\delta_{m_{r}+k} & = \sum_{i=-\infty}^{m_{r}+k}(d_{i-k}-\lambda_{i}) = \sum_{i=-\infty}^{m_{r}+k}d_{i-k} - \sum_{i=1}^{m_{r}+k}\lambda_{i}
 \geq \sum_{i=1}^{\sigma_{r}}\lambda_{i} + (k+m_{r}-\sigma_{r})A_{r} - \sum_{i=1}^{m_{r}+k}\lambda_{i}\\
 & = (k+m_{r}-\sigma_{r})A_{r} + \sum_{i=m_{r}+k+1}^{\sigma_{r}}\lambda_{i}  = 0.
\end{align*}
By \eqref{rmaja} and \eqref{rmajb}
\[
\begin{aligned}
d_{i-k} - \la_i \ge A_r - \la_i  \ge 0 & \qquad\text{for } m_{r}+k\leq i\leq \sigma_{r},
\\
d_{i-k} - \la_i < A_r - \la_i =0 & \qquad\text{for } \sigma_{r-1}+1\leq i\leq m_{r}+k.
\end{aligned}
\]
Combining this with $\delta_{m_{r}+k}\geq 0$ implies that $\delta_{m}\geq 0$ for all $m = \sigma_{r-1}+1,\ldots,\sigma_{r}$.

{\bf Case 2.} Assume $m_{r}+k>\sigma_{r}$. Using \eqref{rmaj6} and then \eqref{rmaja}
\begin{align*}
\delta_{\sigma_{r}} & = \sum_{i=-\infty}^{\sigma_{r}}(d_{i-k}-\lambda_{i})
 = \sum_{i=-\infty}^{m_{r}+k}d_{i-k} - \sum_{i=\sigma_{r}+1}^{m_{r}+k}d_{i-k} - \sum_{i=1}^{\sigma_r} \lambda_i \\
 & \geq   \sum_{i=1}^{\sigma_{r}}\lambda_{i} + (k+m_{r}-\sigma_{r})A_{r} - \sum_{i=\sigma_{r}+1}^{m_{r}+k}d_{i-k} - \sum_{i=1}^{\sigma_{r}}\lambda_{i} > (k+m_{r}-\sigma_{r})A_{r} - \sum_{i=\sigma_{r}+1}^{m_{r}+k}A_{r} = 0.
\end{align*}
By \eqref{rmaja} and \eqref{rmajb}, $d_{i-k}-\lambda_{i} < A_r - \la_i=0$ for all $\sigma_{r-1}+1 \le i \le \sigma_{r}$
Combining this with $\delta_{\sigma_r}\geq 0$ implies that $\delta_{m}\geq 0$ for all $m = \sigma_{r-1}+1,\ldots,\sigma_{r}$. 
 This completes the inductive step and shows \eqref{rmaj1} holds. 

Conversely, assume that $\{d_i\}$ satisfies Riemann interior majorization. First, we note that \eqref{rmaj2} implies
\begin{equation}\label{a1}\begin{split}
0 = \sum_{i=-\infty}^{\infty}(d_{i-k}-\lambda_{i}) & = \sum_{i=-\infty}^{m_{n}+k}(d_{i-k}-\lambda_{i}) + \sum_{i=m_{n}+k+1}^{\infty}(d_{i-k}-\lambda_{i})\\
 & = \sum_{i=-\infty}^{m_{n}}d_{i} - \sum_{i=-\infty}^{m_{n}+k}\lambda_{i} - \sum_{i=m_{n}+1}^{\infty}(B-d_{i}) + \sum_{i=m_{n}+k+1}^{\infty}(B-\lambda_{i})\\
 & = C(A_n) - D(A_n) - \sum_{i=-\infty}^{m_{n}+k}\lambda_{i} + \sum_{i=m_{n}+k+1}^{\infty}(B-\lambda_{i}).
\end{split}\end{equation}
If $\sigma_{n}>m_{n}+k$, then we have
\[\sum_{i=-\infty}^{m_{n}+k}\lambda_{i} + \sum_{i=m_{n}+k+1}^{\infty}(\lambda_{i}-B) = \sum_{i=1}^{m_{n}+k} \la_i + \sum_{i=m_{n}+k+1}^{\sigma_{n}}(\lambda_{i}-B) = \sum_{i=1}^{\sigma_{n}}\lambda_{i} - (\sigma_{n}-m_{n}-k)B.\]
If $\sigma_{n}\leq m_{n}+k$, then we have
\[\sum_{i=-\infty}^{m_{n}+k}\lambda_{i} + \sum_{i=m_{n}+k+1}^{\infty}(\lambda_{i}-B) = \sum_{i=-\infty}^{\sigma_{n}}\lambda_{i} + \sum_{i=\sigma_{n}+1}^{m_{n}+k} B.\]
In either case, using \eqref{a1} we have
\[
C(A_n) - D(A_n) = \sum_{j=1}^{n}A_{j}N_{j} + (k+m_{n}-\sigma_{n})B.
\]
This shows that \eqref{fulltrace1} holds with $k_0=k+m_{n}-\sigma_{n}$. Finally, we must show that \eqref{rmaj6} holds for each $r=1,2,\ldots,n$.

Fix $r= 1,2,\ldots,n$ and assume $\sigma_{r}\geq m_{r}+k$. Using $\delta_{m_{r}+k}\geq 0$ and the fact that $\la_i \le A_r$ for $i \le \sigma_r$, we have
\begin{align*}
\sum_{i=-\infty}^{m_{r}+k}d_{i-k} & \geq \sum_{i=-\infty}^{m_{r}+k}\lambda_{i} = \sum_{i=-\infty}^{\sigma_{r}}\lambda_{i} - \sum_{i=m_{r}+k+1}^{\sigma_{r}}\lambda_{i} \geq \sum_{i=1}^{\sigma_{r}}\lambda_{i} - (\sigma_{r}-m_{r}-k)A_{r}.
\end{align*}
Next assume $\sigma_{r}<m_{r}+k$. Using $\delta_{m_{r}+k}\geq 0$ and the fact $\la_i \ge A_{r+1}>A_r$ for $i \ge \sigma_r+1$, we have
\[\sum_{i=-\infty}^{m_{r}+k}d_{i-k} \geq \sum_{i=-\infty}^{m_{r}+k}\lambda_{i} = \sum_{i=-\infty}^{\sigma_{r}}\lambda_{i} + \sum_{i=\sigma_{r}+1}^{m_{r}+k}\lambda_{i} \geq \sum_{i=-\infty}^{\sigma_{r}}\lambda_{i} + (m_{r}+k-\sigma_{r})A_r.\]
This proves that $\{d_i\}$ satisfies interior majorization as in Definition \ref{mar}. 
\end{proof}

The key result of this section is the sufficiency of Riemann interior majorization for the existence of a self-adjoint operators with prescribed eigenvalues and diagonal. For nondecreasing sequences ordered by $\Z$, the necessity of Riemann interior majorization follows immediately by what was shown in Section \ref{S4} and Theorem \ref{eqmajs}.

\begin{thm}\label{srim}
Let $\{(A_j,N_j)\}_{j=0}^{n+1}$ be as in Definition \ref{rim}. Let $\{d_{i}\}_{i\in\Z}$ be a nondecreasing sequence in $[0,B]$ which satisfies Riemann interior majorization by $\{(A_j,N_j)\}_{j=0}^{n+1}$.
Then, there is a self-adjoint operator $E$ with eigenvalue-multiplicity list $\{(A_j,N_j)\}_{j=0}^{n+1}$ and diagonal $\{d_{i}\}_{i\in\Z}$. That is, $\sigma(E) = \{0,A_{1},\ldots,A_{n},B\}$, and $m_{E}(A_{j}) = N_{j}$ for $j=0,\ldots,n+1$.
\end{thm}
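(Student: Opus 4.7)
The plan is to start from a canonical diagonal operator realizing $\{\lambda_i\}$ and progressively deform its diagonal into $\{d_i\}$ by countably many applications of Lemma \ref{movelemma}. By shifting indices we may assume $k = 0$, so that $\delta_m = \sum_{i \le m}(d_i - \lambda_i) \ge 0$ for all $m \in \Z$ and $\delta_m \to 0$ as $m \to \infty$. Fix an orthonormal basis $\{e_i\}_{i \in \Z}$ of $\Hil$ and let $E_\lambda$ be the diagonal operator with $\langle E_\lambda e_i, e_i \rangle = \lambda_i$. By \eqref{rmaj0}, $E_\lambda$ has spectrum $\{0, A_1, \ldots, A_n, B\}$ with $m_{E_\lambda}(A_r) = N_r$ for $r = 1, \ldots, n$ and $m_{E_\lambda}(0) = m_{E_\lambda}(B) = \infty$, so it suffices to produce $E$ unitarily equivalent to $E_\lambda$ whose diagonal is $\{d_i\}$.

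The key observation is that Lemma \ref{movelemma}(ii) allows one to pass from a ``more extreme'' diagonal $\{\tilde d_i\}$ to a ``less extreme'' diagonal $\{d_i\}$ while remaining in the same unitary orbit. Since $\{\lambda_i\}$ is the most extreme sequence of its kind (its values lie only at $0, A_1, \ldots, A_n, B$), I would construct $\{d_i\}$ by arranging a countable family of elementary local modifications of $\{\lambda_i\}$, each of which pools a pair of $\lambda$-values straddling some level $A_r$ and redistributes them into a small block of consecutive entries of $\{d_i\}$. Because $\{d_i\}$ is nondecreasing, the hypothesis $\max_{I_0} d_i \le \min_{I_1} d_i$ of Lemma \ref{movelemma} is automatic whenever $I_0, I_1$ are chosen as intervals of consecutive indices, and the transfer budget $\eta_0$ required for each modification is supplied precisely by the nonnegativity of the appropriate $\delta_m$.

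Concretely, I would organize the procedure level by level through $A_1 < A_2 < \ldots < A_n$, using at stage $r$ the sets $I_0 = \{i : d_i < A_r\}$ and $I_1 = \{i : d_i \ge A_r\}$. For each $r$, the inequality $\delta_{m_r+k} \ge 0$ arising from \eqref{rmaj1} (which by Theorem \ref{eqmajs} encodes exactly \eqref{fullmaj1}) guarantees that the $N_r$ entries of $\{\lambda_i\}$ equal to $A_r$ can be combinatorially matched against the entries of $\{d_i\}$ in $[A_{r-1}, A_r)$ and $[A_r, A_{r+1})$ via finite invocations of Lemma \ref{movelemma}. The boundary levels corresponding to $\lambda_i = 0$ and $\lambda_i = B$ are treated analogously, using the summability $\sum_{i \le 0} d_i < \infty$ at the lower end and $\delta_m \to 0$ at the upper end to balance the redistributions.

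The main obstacle is the coherent global organization of these infinitely many elementary matchings: each application of Lemma \ref{movelemma} is purely finite, yet they must be arranged so that every $\lambda_i$ is consumed exactly once, every $d_i$ is produced exactly once, and the individual unitary conjugations assemble into a single operator $E$. The condition $\delta_m \to 0$ is what makes this possible: it simultaneously pins down the combinatorial matching at infinity and forces the tail of the construction to contribute only a vanishing error, so that the resulting operator $E$ has diagonal exactly $\{d_i\}_{i \in \Z}$ while remaining in the unitary orbit of $E_\lambda$.
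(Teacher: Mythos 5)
Your proposal takes a genuinely different route from the paper, but it has a real gap that the paper's approach is specifically designed to avoid. You start from the diagonal operator $E_\lambda$ and propose to reach $\{d_i\}$ by \emph{infinitely many} applications of Lemma~\ref{movelemma}, one per ``elementary local modification.'' Each application of Lemma~\ref{movelemma}(ii) produces a new operator in the unitary orbit whose diagonal is slightly less extreme, but it gives you no control over the unitary witnessing the equivalence. There is no reason the resulting infinite sequence of operators (or of unitaries) should converge, and even if one arranges SOT convergence of the operators, the limit can leave the unitary orbit entirely — the unitary orbit of $E_\lambda$ is not SOT-closed. You acknowledge this obstacle (``the coherent global organization of these infinitely many elementary matchings'') and assert that $\delta_m \to 0$ resolves it, but that claim is exactly the missing step: $\delta_m \to 0$ controls the diagonal sequence, not the sequence of unitary conjugations. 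As written, the argument does not produce an operator $E$; it produces a sequence of operators each with the wrong diagonal.

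The paper's proof avoids this convergence problem entirely by using only a \emph{single} application of Lemma~\ref{movelemma}. After normalizing so that $d_i \in (0,B)$ for all $i$ (the cases \eqref{srimB}, \eqref{srim0} going directly to Theorem~\ref{horn-ndec}), it fixes $m_0$ minimizing $\delta_m$ on $[0,\sigma]$ and splits into cases. In the generic Case 1, one application of Lemma~\ref{movelemma}(i) with $\eta_0 = \delta_{m_0}$ modifies $\{d_i\}$ on a finite set so that the left tail $\{\tilde d_i\}_{i\le m_0}$ matches $\{\lambda_i\}_{i\le m_0}$ exactly in trace; Theorem~\ref{horn-ndec} then yields a rank-$m_0$ operator $\tilde E_0$ on that half, and symmetrically Theorem~\ref{horn-ninc} handles the right half, so that $\tilde E = \tilde E_0 \oplus (B\mathbf{I}-\tilde E_B)$ has diagonal $\{\tilde d_i\}$ and the right eigenvalue-multiplicity list. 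Lemma~\ref{movelemma}(ii) is then invoked once — at the finite level where it is valid — to pass back to $\{d_i\}$. The boundary cases $\delta_{m_0}=\delta_\sigma$ or $\delta_{m_0}=\delta_0$ require Kadison's Theorem~\ref{Kadison} to construct a projection carrying the infinite part. The crucial idea you are missing is this reduction of the infinite problem to finite-rank Schur-Horn plus Kadison on two blocks, rather than an unbounded iteration; without it, the iteration scheme does not close.
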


\begin{proof} Set $\sigma=|\{i\in\Z:\lambda_{i}\neq 0,B\}|=\sum_{j=1}^{n}N_{j}$.
Without loss of generality we can assume that the sequence  $\{d_{i}\}_{i\in\Z}$ satisfies Riemann interior majorization in Definition \ref{rim} with $k=0$. This is because shifting a sequence does not affect the fact that it satisfies Riemann interior majorization.

The special case when there exists $i_0\in \Z$ such that
\begin{equation}\label{srimB}
d_i = B \qquad\text{for all } i>i_0
\end{equation}
follows directly from Theorem \ref{horn-ndec} applied to the sequences $\{\la_i\}_{i=1}^{M}$ and $\{d_i\}_{i=-\infty}^{M}$ for any $M\geq\max\{i_{0},\sigma\}$. Moreover, by symmetry considerations, as explained later in Case 3, one can also deal with the reciprocal case when
\begin{equation}\label{srim0}
d_i = 0 \qquad\text{for all } i<i_0.
\end{equation}
Thus, without loss of generality we can assume that neither \eqref{srimB} nor \eqref{srim0} holds. Since $\{d_{i}\}$ is nondecreasing this is equivalent to $d_{i}\in(0,B)$ for all $i\in\Z$. For convenience we note that for any $m\in\Z$ we have
\begin{equation}\label{srimC}
\delta_{m} = \sum_{i=-\infty}^{m}(d_{i}-\lambda_{i}) = \sum_{i=m+1}^{\infty}(\lambda_{i}-d_{i}) = \begin{cases} \sum_{i=-\infty}^{m}d_{i} & m\leq 0,
\\
\sum_{i=m+1}^{\infty}(B-d_{i}) & m\geq\sigma.
\end{cases}\end{equation}
Fix an integer $m_0 \in [0,\sigma]$ such that
\begin{equation}\label{min}\delta_{m_{0}} = \min\{\delta_{m}:0\leq m\leq\sigma\},\end{equation}
Obviously, $\delta_{m_{0}}\leq\min\{\delta_{0},\delta_{\sigma}\}$. The proof of Theorem \ref{srim} splits into three cases.

{\bf Case 1.} $\delta_{m_{0}}<\min\{\delta_{0},\delta_{\sigma}\}$. There are finite subsets $I_{0}\subset\Z\cap(-\infty,0]$ and $I_{1}\subset \Z\cap[\sigma+1,\infty)$ such that
\[\sum_{i\in I_{0}}d_{i}>\delta_{m_{0}}\quad\text{and}\quad \sum_{i\in I_{1}}(B-d_{i})>\delta_{m_{0}}.
\]
We apply Lemma \ref{movelemma} (i) to $\{d_{i}\}$ on the interval $[0,B]$ with $\eta_{0} = \delta_{m_{0}}$, to obtain $\{\tilde{d}_{i}\}_{i\in\Z}$. 

We will show that $\{\lambda_{i}\}_{i=1}^{m_{0}}$ and $\{\tilde{d}_{i}\}_{i=-\infty}^{m_{0}}$ satisfy the conditions of Theorem \ref{horn-ndec}. From \eqref{ops1} and the assumption that $\{d_{i}\}$ is nondecreasing we see that $\tilde{d}_{i}\leq d_{i}\leq d_{1} = \tilde{d}_{1}$ for all $i\leq 1$. Since $\{d_{i}\}_{i\in\Z}$ is nondecreasing and $d_{i}=\tilde{d}_{i}$ for $i=1,\ldots,m_{0}$ we see that $\{\tilde{d}_{i}\}_{i=1}^{m_{0}}$ is nondecreasing. For $1\leq m\leq m_{0}$ we have
\[
\sum_{i=-\infty}^{m} \tilde{d}_{i} -  \sum_{i=1}^{m}\lambda_{i}
=
\sum_{i=-\infty}^{0} \tilde{d}_{i} +  \sum_{i=1}^{m}(\tilde{d}_{i}-\lambda_{i}) = \sum_{i=-\infty}^{0} d_{i} - \eta_0 +  \sum_{i=1}^{m}( d_{i}-\lambda_{i})
= \delta_m -\delta_{m_0} \ge 0,
\]
with equality when $m=m_0$. By Theorem \ref{horn-ndec} there is a positive operator $\tilde{E}_{0}$ with positive eigenvalues $\{\lambda_{i}\}_{i=1}^{m_{0}}$ and diagonal $\{\tilde{d}_{i}\}_{i=-\infty}^{m_{0}}$. Since the diagonal of $\tilde{E}_{0}$ is an infinite sequence we also have $m_{\tilde{E}_{0}}(0)=\infty$.

By a similar argument applied to the sequence $\{B-\tilde{d}_{i}\}_{i=m_{0}+1}^{\infty}$ and an appeal to Theorem \ref{horn-ninc} there is a positive operator $\tilde{E}_{B}$ with positive eigenvalues $\{B-\lambda_{i}\}_{i=m_{0}+1}^{\sigma}$, diagonal $\{B-\tilde{d}_{i}\}_{i=m_{0}+1}^{\infty}$ and $m_{\tilde{E}_{B}}(0) = \infty$.

Let $\tilde{E} = \tilde{E}_{0}\oplus(B\mathbf{I}-\tilde{E}_{B})$. Note that $\tilde{E}$ has eigenvalue-multiplicity list $\{(A_{j},N_{j})\}_{j=0}^{n+1}$ and diagonal $\{\tilde{d}_{i}\}_{i\in\Z}$. By Theorem \ref{movelemma} (ii) there is an operator $E$, unitarily equivalent to $E$, with diagonal $\{d_{i}\}_{i\in\Z}$. This completes the first case.

{\bf Case 2.} $\delta_{m_{0}} = \delta_{\sigma} \le \delta_{0}$. 
The proof of Case 2 breaks into two subcases. In subcase (i) we assume that there is a (finite or infinite) set $I_0\subseteq\Z\cap(-\infty,0]$ such that
\begin{equation}\label{maj3}
\sum_{i\in I_0}d_{i} = \sum_{i=1}^{\sigma}(\lambda_{i}-d_{i}).
\end{equation}
In subcase (ii) we assume that there exists a {\bf finite} set $I_0\subseteq\Z\cap(-\infty,0]$ such that
\begin{equation}\label{maj4}
\sum_{i\in I_0}d_{i} > \sum_{i=1}^{\sigma}(\lambda_{i}-d_{i}).
\end{equation}
Observe that
\begin{equation}\label{maj2}
\sum_{i=m+1}^{\sigma}(\lambda_{i}-d_{i}) = \delta_{m}-\delta_{\sigma}\geq 0\qquad m=0,1,\ldots,\sigma,
\end{equation}
which implies that
\begin{equation}\label{maj2a}
\sum_{i=-\infty}^{0}d_{i} = \delta_{0} \ge\delta_{0} - \delta_{\sigma} = \sum_{i=1}^{\sigma}(\lambda_{i}-d_{i}) \ge 0.
\end{equation}
From \eqref{maj2a} we see that if subcase (ii) fails, then we must have $\sum_{i=-\infty}^{0}d_{i} = \sum_{i=1}^{\sigma}(d_{i}-\lambda_{i})$ and we are in subcase (i).

First, assume we are in subcase (i). If $I_0$ is finite, then $\{d_{i}\}_{i\in I_0 \cup\{1,\ldots,\sigma\}}$ and the sequence $\{\lambda_{i}\}_{i\in I_0 \cup\{1,\ldots,\sigma\}}$, consisting of $|I_0|$ zeros and  $\{\lambda_{i}\}_{i=1}^{\sigma}$, satisfy majorization property of the Schur-Horn Theorem \ref{horn} (after reversing indexing). If $I_0$ is infinite, then the assumption that $\{d_{i}\}$ is nondecreasing guarantees that the assumptions of Theorem \ref{horn-ndec} are also met. The fact that $d_i$'s for $i\le 0$ are indexed by $I_0$ does not cause any problem here since one can temporarily reindex $\{d_{i}\}_{i\in I_0}$ into $\{d_i\}_{i=-\infty}^0$. 
Therefore, either Theorem \ref{horn} or Theorem \ref{horn-ndec} implies that there is a positive rank $\sigma$ operator $E_{0}$ with diagonal $\{d_{i}\}_{i\in I_0 \cup\{1,\ldots,\sigma\}}$ and spectrum $\sigma(E_0)=\{0,A_1,\ldots,A_n\}$, $m_{E_{0}}(A_{j})=N_{j}$ for each $j=1,\ldots,n$ and $m_{0}(E_{0})=|I_{0}|$. We shall establish that a similar conclusion holds in subcase (ii), albeit with appropriately modified diagonal terms.

Next, we assume we are in subcase (ii). Set
\[
\eta_0 := \sum_{i\in I_0}d_{i} - \sum_{i=1}^{\sigma}(\lambda_{i}-d_{i})>0.
\]
Observe that
\[
\delta_{\sigma} = \sum_{i=\sigma+1}^{\infty}(B-d_{i}) = \sum_{i=-\infty}^{\sigma}(d_{i} - \lambda_{i}) = \sum_{i=-\infty}^{0}d_{i} - \sum_{i=1}^{\sigma}(\lambda_{i}-d_{i})> \sum_{i\in I_0}d_{i} - \sum_{i=1}^{\sigma}(\lambda_{i}-d_{i}) = \eta_0.
\]
The strict inequality above is a consequence of our assumption that \eqref{srim0} fails.
Hence, there is a finite set $I_1\subset\Z\cap[\sigma+1,\infty)$ such that
\[
\sum_{i\in I_1}(B-d_{i}) > \eta_0.
\]
We apply Lemma \ref{movelemma} (i) to the sequence $\{d_{i}\}_{i\in \Z}$ on the interval $[0,B]$ with $\eta_{0}$ to obtain sequence $\{\tilde{d}_{i}\}_{i\in \Z}$. 
In particular, we have
\begin{align}\label{rsim10}
\sum_{i\in I_0}\tilde{d}_{i} &= \sum_{i\in I_0}d_{i} - \eta_{0} = \sum_{i=1}^{\sigma}(\lambda_{i}-d_{i}),
\\
\label{rsim11}
\sum_{i= \sigma+1}^\infty (B- \tilde d_i) & = \sum_{i= \sigma+1}^\infty (B- d_i) - \eta_0=\sum_{i=1}^\infty (\lambda_i-d_i)-\sum_{i\in I_0} d_i.
\end{align}

Combining the fact that $d_{i} = \tilde{d}_{i}$ for $i=1,2,\ldots,\sigma$ with \eqref{rsim10} yields 
\[
\sum_{i\in I_0}\tilde{d}_{i} + \sum_{i=1}^{m}(\tilde{d}_{i}-\lambda_{i}) 
= \sum_{i\in I_0 }\tilde{d}_{i} + \sum_{i=1}^{m}(d_{i}-\lambda_{i}) = \sum_{i=m+1}^{\sigma}(\lambda_{i}-d_{i})
 \geq 0  \qquad\text{for } m=1,2,\ldots,\sigma,
\]
with equality when $m=\sigma$. Since $\tilde{d}_{i}\leq d_{i}\leq d_{1}$ for all $i\in I_0$ this shows that the sequence $\{\tilde{d}_{i}\}_{i\in I_0 \cup\{1,\ldots,\sigma\}}$ and the sequence $\{\lambda_{i}\}_{i\in I_0 \cup\{1,\ldots,\sigma\}}$, consisting of $|I_0|$ zeros and  $\{\lambda_{i}\}_{i=1}^{\sigma}$, satisfy majorization property of the Schur-Horn Theorem \ref{horn} (with reverse ordering). Thus, there exists an operator $\tilde{E}_{0}$ with diagonal $\{\tilde{d}_{i}\}_{i\in I_0\cup\{1,\ldots,\sigma\}}$ and $\sigma(\tilde E_{0}) = \{0,A_{1},\ldots,A_{n}\}$, $m_{\tilde{E}_{0}}(A_{j})=N_{j}$ for $j=1,\ldots,n$, and $m_{\tilde{E}_{0}}(0)=|I_{0}|$. This was also shown in subcase (i), albeit with $\tilde d_i=d_i$ and $\tilde E_0 = E_0$. One can think of a trivial application of Lemma \ref{movelemma} (i) with $\eta_0=0$ in subcase (i). Thus, both subcases yield the same conclusion.

To finish the proof of Case 2 we set $I'_{0} = (\Z\cap(-\infty,0])\setminus  I_0$. By \eqref{rsim11} and \eqref{srimC} we have
\[
\sum_{i\in I'_0}\tilde{d}_{i} - \sum_{i=\sigma+1}^{\infty}(B-\tilde{d}_{i}) = \sum_{i\in I_0 \cup I'_0} d_{i} - \sum_{i=1}^{\infty}(\lambda_i-d_{i}) = 0.
\]
By Theorem \ref{Kadison} there is a projection $\tilde{P}$ such that $B\tilde{P}$ has diagonal $\{\tilde{d}_{i}\}_{i\in I'_0 \cup\{\sigma+1,\sigma+2,\ldots\}}$. Since $\sum_{i>\sigma}\tilde{d}_{i}=\infty$ it is clear that $m_{B\tilde{P}}(B)=\infty$. If $|I_{0}|<\infty$, then $\sum_{i\in I'_{0}}(B-\tilde{d}_{i})=\infty$ and thus $m_{B\tilde{P}}(0)=\infty$. Consequently, $\tilde E = B\tilde{P}\oplus \tilde{E}_{0}$ has eigenvalue-multiplicity list $\{(A_{j},N_{j}\}_{j=0}^{n+1}$, and diagonal $\{\tilde{d}_{i}\}_{i\in\Z}$. By Lemma \ref{movelemma} (ii) there is an operator $E$ which is unitarily equivalent to $\tilde E$ with diagonal $\{d_{i}\}_{i\in\Z}$. This completes the proof of Case 2.

{\bf Case 3.} $\delta_{m_{0}}=\delta_{0} \le \delta_{\sigma}$. Define the sequences $d'_i=B-d_{-i+\sigma+1}$ and $\lambda'_i=B-\lambda_{-i+\sigma+1}$ for all $i\in\Z$. One can verify that $\{d'_i\}$ satisfies Riemann interior majorization with respect to the sequence $\{(B-A_{-j+n+1},N_{-j+n+1})\}_{j=0}^{n+1}$. With this modification the sequence $\{d'_i\}$ satisfies the requirements of Case 2. Hence, there exists an operator $E'$ with eigenvalue list $\{\lambda'_{i}\}$ and diagonal $\{d'_{i}\}$. Therefore, the operator $E:=B\mathbf{I}-E'$ has the desired properties. This completes the proof of Case 3 and the theorem.
\end{proof}

We will frequently find it useful to append zeros and $B$'s to a sequence in order to be able to apply Theorem \ref{srim} to construct an operator. The following lemma shows how these appended diagonal terms may be removed from an operator.

\begin{lem}\label{decomp} Let $E$ be a positive operator on a Hilbert space $\Hil$ with with $\norm{E} \leq B$. Let $\{e_{i}\}_{i\in I}$ be an orthonormal basis for $\Hil$ and set $d_{i} = \langle Ee_{i},e_{i}\rangle$ for $i\in I$. Set $\Kil=\overline{\lspan}\{e_{i} :d_{i}\in(0,B)\}$ and $\Hil_{\lambda} =\overline{\lspan}\{e_{i} :d_{i} = \lambda\}$ for $\lambda = 0,B$. There exists a positive operator $E_{0}:\Kil\to\Kil$ such that $E = \mathbf{0}_{0}\oplus E_{0}\oplus B\mathbf{I}_{B}$ where $\mathbf{0}_{0}$ is the zero operator on $\Hil_{0}$ and $\mathbf{I}_{B}$ is the identity operator $\Hil_{B}$. In particular, $E_{0}$ has diagonal $\{d_{i}\}_{d_{i}\in(0,B)}$, $\sigma(E_{0})\subset\sigma(E)$, and $m_{E_{0}}(\lambda) = m_{E}(\lambda)$ for all $\lambda\in(0,B)$.
\end{lem}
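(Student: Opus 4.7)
The plan is to exploit positivity of both $E$ and $B\mathbf{I}-E$ (the latter being positive since $0\le E$ and $\|E\|\le B$) to force $E$ to vanish on the zero-diagonal basis vectors and to act as $B\mathbf{I}$ on the $B$-diagonal basis vectors. For any $e_i$ with $d_i=0$, positivity of $E$ gives $\|E^{1/2}e_i\|^2=\langle Ee_i,e_i\rangle=d_i=0$, hence $E^{1/2}e_i=0$ and so $Ee_i=0$. Symmetrically, for $e_i$ with $d_i=B$ we have $\langle(B\mathbf{I}-E)e_i,e_i\rangle = B-d_i = 0$, and since $B\mathbf{I}-E\ge 0$ the same square-root argument yields $(B\mathbf{I}-E)e_i=0$, i.e.\ $Ee_i=Be_i$.

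Passing to the closed linear spans, continuity of $E$ then gives $E|_{\Hil_0}=\mathbf{0}_0$ and $E|_{\Hil_B}=B\mathbf{I}_B$. Consequently $\Hil_0\subset\ker E$ and $\Hil_B\subset\ker(E-B\mathbf{I})$ are both invariant under $E$; since $E$ is self-adjoint, the orthogonal complement $\Kil=(\Hil_0\oplus\Hil_B)^\perp$ is automatically invariant as well. Defining $E_0:=E|_{\Kil}$, the orthogonal decomposition $\Hil=\Hil_0\oplus\Kil\oplus\Hil_B$ immediately yields $E=\mathbf{0}_0\oplus E_0\oplus B\mathbf{I}_B$, and positivity of $E_0$ is inherited from that of $E$. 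By construction $\{e_i:d_i\in(0,B)\}$ is an orthonormal basis for $\Kil$, so the diagonal of $E_0$ with respect to this basis is precisely $\{d_i\}_{d_i\in(0,B)}$.

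It remains to check the spectral statements. For any $\lambda\in(0,B)$, suppose $Ev=\lambda v$ and decompose $v=v_0+v_{\Kil}+v_B$ along $\Hil_0\oplus\Kil\oplus\Hil_B$. Applying $E$ componentwise gives $0\cdot v_0+E_0 v_{\Kil}+B v_B=\lambda v_0+\lambda v_{\Kil}+\lambda v_B$; matching components (which are in mutually orthogonal subspaces) forces $\lambda v_0=0$ and $(B-\lambda)v_B=0$, so $v_0=v_B=0$ and $v=v_{\Kil}\in\Kil$ with $E_0 v=\lambda v$. Conversely any eigenvector of $E_0$ is an eigenvector of $E$ with the same eigenvalue. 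Therefore $\ker(E-\lambda\mathbf{I})=\ker(E_0-\lambda\mathbf{I})$ for every $\lambda\in(0,B)$, which yields $m_{E_0}(\lambda)=m_E(\lambda)$ and in particular $\sigma(E_0)\cap(0,B)=\sigma(E)\cap(0,B)$; the inclusion $\sigma(E_0)\subset\sigma(E)$ then follows since $E_0$ is an orthogonal direct summand of $E$.

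There is no substantial obstacle in this argument — the only delicate point is the initial step of turning the scalar equality $\langle Ee_i,e_i\rangle=0$ into the vector equality $Ee_i=0$, which is handled cleanly by factoring through $E^{1/2}$ (respectively $(B\mathbf{I}-E)^{1/2}$) and using positivity. Everything else is a straightforward bookkeeping of orthogonal direct sums.
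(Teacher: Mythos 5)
Your proof is correct and follows essentially the same strategy as the paper's: show that each $e_i$ with $d_i=0$ (resp.\ $d_i=B$) is an eigenvector of $E$, conclude that $\Hil_0\oplus\Hil_B$ and hence $\Kil$ are invariant, and restrict. The only difference is a local technical choice: where you factor through $E^{1/2}$ (and $(B\mathbf{I}-E)^{1/2}$) to pass from $\langle Ee_i,e_i\rangle=0$ to $Ee_i=0$, the paper instead invokes the equality case of the Cauchy--Schwarz inequality for $\langle Ee_i,e_i\rangle=B$ and then applies the argument to $B\mathbf{I}-E$; both are standard and equivalent. You also verify the invariance of $\Kil$ by citing self-adjointness of $E$ rather than by the paper's direct inner-product computation, and you spell out the eigenspace identification for $\lambda\in(0,B)$, which the paper leaves implicit — these are expository rather than mathematical differences.
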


\begin{proof} Let $i\in I$ such that $d_{i} = B$. Using the Cauchy-Schwarz inequality we have
\[B = \langle Ee_{i},e_{i}\rangle \leq \norm{Ee_{i}}\leq \norm{E}\leq B.\]
This implies
\[\langle Ee_{i},e_{i}\rangle = \norm{Ee_{i}}= \norm{E}=B.\]
Equality in the Cauchy-Schwarz inequality implies that the vectors $Ee_{i}$ and $e_{i}$ are linearly dependent. Since $E\geq0$ there is some $\lambda\in[0,\infty)$ such that $Ee_{i} = \lambda e_{i}$. Next, we see
\[B = \norm{Ee_{i}} = \norm{\lambda e_{i}} = \lambda.\]
Thus $e_{i}$ is an eigenvector with eigenvalue $B$. Since $\{e_{i}:d_{i}=B\}$ is a basis for $\Hil_{B}$ this shows that every nonzero $f\in\Hil_{B}$ is an eigenvector with eigenvalue $B$.
Applying the previous argument to the operator $B-E$ we see that $\Hil_{0}\subset\ker(E)$.

Next, we claim that $\mathcal{K}$ is invariant under $E$. If $f\in\Kil$, $g\in\Hil_{0}$, and $h\in\Hil_{B}$ then
\[\langle Ef,Eg\rangle = \langle Ef,0\rangle = 0 = B^{2}\langle f,h\rangle = \langle f,B^{2}h\rangle = \langle f,E^{2}h\rangle = \langle Ef,Eh\rangle,\]
which implies $Ef\in\Hil_{0}^{\perp}$ and $Ef\in\Hil_{B}^{\perp}$, that is $Ef\in\Kil$. Finally, define $E_{0}:\Kil\to\Kil$ by $E_{0}f = Ef$ for $f\in\Kil$.\end{proof}

Finally, combining Theorems \ref{eqmajs} and \ref{srim} we can show the sufficiency of Lebesgue interior majorization when exterior majorization is not present. In essence, Theorem \ref{inthorn} deals with sequences which satisfy Lebesgue interior majorization, but do not conform to more restrictive Riemann interior majorization. We wish to emphasize that the index set $I$ below can be either finite or (countably) infinite. In the short run this forces us to consider additional cases. In the long run Theorem \ref{inthorn} will enable us to streamline the proof of the sufficiency direction in Theorem \ref{fullthm}.

\begin{thm}\label{inthorn}
Let $\{(A_j,N_j)\}_{j=0}^{n+1}$ be as in Definition \ref{rim}. Let $\{d_{i}\}_{i\in I}$ be a sequence in $[0,B]$ which satisfies interior majorization by  $\{(A_j,N_j)\}_{j=0}^{n+1}$.
Then, there is a self-adjoint operator $E$ with spectrum $\sigma(E) \subset \{0,A_1,\ldots, A_n, B\}$ such that $m_E(A_j)=N_j$ for $j=1,\ldots,n$. In addition, if $s_1=|\{i \in I:d_i<A_1\}|=\infty$, then $m_E(0)=\infty$. Likewise, if $s_{n}=|\{i \in I:d_i>A_n\}|=\infty$, then $m_E(B)=\infty$.
\end{thm}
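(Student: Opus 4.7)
The plan is to reduce to the sufficiency of Riemann interior majorization (Theorem \ref{srim}) by reindexing $\{d_i\}_{i\in I}$ as a bi-infinite nondecreasing sequence, possibly after padding with zeros and $B$'s, and then stripping off the appended entries using Lemma \ref{decomp}.

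First, $C(B/2),D(B/2)<\infty$ forces $\{d_i\}$ to have only finitely many entries in each $[\eps,B-\eps]$, so the only possible accumulation points are $0$ and $B$. I would form a new index set $\tilde I \supseteq I$ by appending infinitely many indices carrying the value $0$ when $s_1<\infty$, and infinitely many indices carrying the value $B$ when $s_n<\infty$; denote the resulting sequence by $\{\tilde d_i\}_{i\in\tilde I}$. A direct inspection of Definition \ref{mar} shows that appending $0$'s and $B$'s leaves $C(\alpha)$, $D(\alpha)$, and the cardinalities $|\{i:A_r\le d_i<A_n\}|$ all unchanged for every $\alpha\in(0,B)$ and $r=1,\ldots,n$. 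Hence $\{\tilde d_i\}$ still satisfies interior majorization by $\{(A_j,N_j)\}_{j=0}^{n+1}$, and now $\tilde s_1=\tilde s_n=\infty$.

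Next, I would reindex $\{\tilde d_i\}$ as a nondecreasing bi-infinite sequence $\{d_i'\}_{i\in\Z}$ in $[0,B]$; this is possible since after padding there are infinitely many entries below $A_1$ and infinitely many above $A_n$, with only finitely many in between. The required summability $\sum_{i\le 0}d_i'<\infty$ is automatic, since this sum is bounded by $C(A_1)<\infty$. Theorem \ref{eqmajs} then converts Lebesgue interior majorization into Riemann interior majorization for $\{d_i'\}$, and Theorem \ref{srim} produces a self-adjoint operator $\tilde E$ on $\ell^2(\tilde I)$ with diagonal $\{\tilde d_i\}$ and eigenvalue-multiplicity list $\{(A_j,N_j)\}_{j=0}^{n+1}$.

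Finally, applying Lemma \ref{decomp} decomposes $\tilde E=\mathbf{0}_{\tilde I_0}\oplus \tilde E_0\oplus B\mathbf{I}_{\tilde I_B}$, where $\tilde I_0=\{i\in\tilde I:\tilde d_i=0\}$ and $\tilde I_B=\{i\in\tilde I:\tilde d_i=B\}$. Setting $I_0=\{i\in I:d_i=0\}$ and $I_B=\{i\in I:d_i=B\}$, put $E:=\mathbf{0}_{I_0}\oplus\tilde E_0\oplus B\mathbf{I}_{I_B}$, which is self-adjoint on $\ell^2(I)$ and has diagonal $\{d_i\}_{i\in I}$. The $\mathbf{0}$ and $B\mathbf{I}$ summands contribute only to eigenvalues $0$ and $B$, so $\sigma(E)\subseteq\{0,A_1,\ldots,A_n,B\}$ and $m_E(A_j)=m_{\tilde E_0}(A_j)=m_{\tilde E}(A_j)=N_j$ for $j=1,\ldots,n$. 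If $s_1=\infty$, then no zero-padding was performed, so $\tilde I_0=I_0$, and $m_E(0)=|I_0|+m_{\tilde E_0}(0)=m_{\tilde E}(0)=N_0=\infty$; the clause for $s_n=\infty$ is symmetric. I expect the main obstacle to be the careful verification that padding preserves interior majorization together with the bookkeeping needed to correctly track multiplicities through the decomposition; after that, the construction is essentially an application of Theorem \ref{srim} and Lemma \ref{decomp}.
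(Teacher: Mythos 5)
Your proof is correct and uses the same essential tools and strategy as the paper: convert to Riemann interior majorization via Theorem \ref{eqmajs}, invoke Theorem \ref{srim}, and strip appended $0$'s and $B$'s with Lemma \ref{decomp}. The only difference is that the paper splits into four cases according to whether $0$ and/or $B$ are accumulation points of $\{d_i\}_{d_i\in(0,B)}$, whereas you unify them by padding with $0$'s and $B$'s exactly when $s_1<\infty$ or $s_n<\infty$; the verification that such padding leaves $C(\alpha)$, $D(\alpha)$, and the relevant cardinalities unchanged is the observation that makes this streamlining work.
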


\begin{proof} Set $J:=\{i\in I:d_{i}\in(0,B)\}$ and $J_{\lambda}:=\{i:d_{i}=\lambda\}$ for $\lambda=0,B$. Let ${\mathbf I}$ be the identity operator on a space of dimension $|J_{B}|$ and let $\bf{0}$ be the zero operator on a space of dimension $|J_{0}|$. Since $C(B/2)<\infty$ and $D(B/2)<\infty$, the only possible limit points of $\{d_{i}\}_{i\in J}$ are $0$ and $B$. The argument breaks into four cases depending on the number of limit points.

{\bf Case 1:} Assume both $0$ and $B$ are limit points of the sequence $\{d_{i}\}_{i\in J}$. Note that in this case $s_{1}=s_{n}=\infty$. This implies that there is a bijection $\pi:\Z\to J$ such that $\{d_{\pi(i)}\}_{i\in\Z}$ is in nondecreasing order. Since $\{d_{i}\}_{i\in J}$ still satisfies Lebesgue interior majorization, by Theorem \ref{eqmajs} the sequence $\{d_{\pi(i)}\}_{i\in \Z}$ satisfies Riemann interior majorization. By Theorem \ref{srim} there is a positive operator $E'$ with diagonal $\{d_{i}\}_{i\in J}$ and eigenvalue-multiplicity list $\{(A_j,N_j)\}_{j=0}^{n+1}$. The operator $E'\oplus B {\mathbf I}\oplus \bf{0}$ is as desired. This completes the proof of Case 1.

{\bf Case 2:} Assume $0$ is the only limit point of $\{d_{i}\}_{i\in J}$. Note that $s_{1}=\infty$ in this case. There is a bijection $\pi:-\N\to J$ such that $\{d_{\pi(i)}\}_{i=-\infty}^{-1}$ is in nondecreasing order. Define the sequence $\{d_{i}'\}_{i\in\Z}$ by $d_{i}'=d_{\pi(i)}$ for $i<0$ and $d_{i}'=B$ for $i\geq 0$. The sequence $\{d_{i}'\}$ satisfies Lebesgue interior majorization. Theorem \ref{eqmajs} implies that $\{d_{i}'\}$ also satisfies Riemann interior majorization. By Theorem \ref{srim} there is a positive operator $E'$ with diagonal $\{d_{i}'\}_{i\in \Z}$ and eigenvalue-multiplicity list $\{(A_j,N_j)\}_{j=0}^{n+1}$.

By Lemma \ref{decomp} there is a positive operator $E_{0}$ with diagonal $\{d_{i}\}_{i\in J}$, spectrum $\sigma(E_{0})\subset\{A_{0},\ldots,A_{n+1}\}$, and $m_{E_{0}}(A_{j}) = N_{j}$ for $j=1,\ldots,n$. Moreover, since $\{d_{i}\}_{i\in J}$ is an infinite summable sequence, we see that $E_{0}$ is finite rank and $m_{E_{0}}(0) = \infty$. The operator $E=E_{0}\oplus {\mathbf 0} \oplus B{\mathbf I}$ has diagonal $\{d_{i}\}_{i\in I}$ and $m_{E}(A_{j}) = N_{j}$ for $j=1,\ldots,n$. In addition, if $s_{n}=\infty$, then $J_{B}$ must be infinite, and hence $m_{E}(B) = \infty$. This completes the proof of Case 2.

{\bf Case 3:} Assume $B$ is the only limit point of $\{d_{i}\}_{i\in J}$. The proof of this case follows by an obvious modification of Case 2.

{\bf Case 4:} Assume $\{d_{i}\}_{i\in J}$ has no limit points. This implies $J$ is a finite set. There is a bijection $\pi:\{1,2,\ldots,|J|\}\to J$ such that $\{d_{\pi(i)}\}_{i=1}^{|J|}$ is nondecreasing. Define the sequence $\{d_{i}'\}_{i\in\Z}$ by 
\[
d_{i}'= \begin{cases} 0 &  i\leq0,
\\
d_{\pi(i)} & i=1,\ldots,|J|,
\\
B & i>|J|.
\end{cases}
\]
Note that $\{d_{i}'\}$ satisfies interior majorization, and Theorem \ref{eqmajs} implies that it also satisfies Riemann interior majorization. By Theorem \ref{srim} there is a self-adjoint operator $E'$ with diagonal $\{d_{i}'\}_{i\in \Z}$ and eigenvalue-multiplicity list $\{(A_j,N_j)\}_{j=0}^{n+1}$.

By Lemma \ref{decomp} there is a positive operator $E_{0}$ with diagonal $\{d_{i}\}_{i\in J}$, spectrum $\sigma(E_{0})\subset\{A_{0},\ldots,A_{n+1}\}$, and $m_{E_{0}}(A_{j}) = N_{j}$ for $j=1,\ldots,n$. The operator $E = \mathbf{0}\oplus E_{0}\oplus B\mathbf{I}$ has diagonal $\{d_{i}\}_{i\in I}$ and $m_{E}(A_{j}) = N_{j}$ for $j=1,\ldots,n$. In addition, since $J$ is a finite set, $s_{n}=\infty$ implies that $J_{B}$ is infinite, and hence $m_{E}(B) = \infty$. Similarly, if $s_{1}=\infty$, then $J_{0}$ is infinite, and hence $m_{E}(0)=\infty$. This completes the proof of Case 4 and the theorem.

\end{proof}

\section{Sufficiency of exterior majorization}

The goal of this section is to show the sufficiency of the exterior majorization in the case when there is exactly one eigenvalue with infinite multiplicity. This corresponds to the case when interior majorization is not present and yields the Schur-Horn theorem for finite rank (not necessarily positive) self-adjoint operators.

\begin{defn}\label{ulmaj}
Let $\{A_{j}\}_{j=0}^{p}$ be an increasing sequence. For each $j=1,\ldots,p$, let $N_{j}\in\N$ and let $N_{0}\in\N\cup\{\infty\}$. We say that $\{d_{i}\}$ satisfies {\it upper exterior majorization} by $\{(A_{j},N_{j})\}_{j=0}^{p}$ if for each $r=0,1,\ldots,p$
\begin{equation}\label{ulmaj1}
\sum_{d_{i}\geq A_{r}}(d_{i}-A_{r}) \leq \sum_{j=r+1}^{p}N_{j}(A_{j}-A_{r})<\infty.
\end{equation}
\end{defn}

\begin{remark}
Note that the value of $N_{0}$ does not play any role in Definition \ref{ulmaj}. Nevertheless, it is convenient to include $N_0$ in the above definition in order to form the pair $(A_0,N_0)$.
\end{remark}

\begin{thm}\label{pfrhorn} Let $\{A_{j}\}_{j=0}^{p}$ be an increasing sequence, let $N_{j}\in\N$ for each $j=1,2,\ldots,p$, and let $N_0\in\N\cup\{\infty\}$. If a sequence $\{d_{i}\}_{i\in I}$ in $[A_{0},A_{p}]$ satisfies upper exterior majorization by $\{(A_{j},N_{j})\}_{j=0}^{p}$ and
\begin{equation}\label{pfrhorn1}
\sum_{i\in I}(d_{i}-A_0) = \sum_{j=1}^{p}N_{j}(A_{j}-A_0),\end{equation}
then there exists a positive operator $E$ with diagonal $\{d_{i}\}_{i\in I}$ and the following properties:
\begin{align}\label{pfrhorn2}\sigma(E) &\subseteq \{A_{0},\ldots,A_{p}\},
\\
\label{pfrhorn3} m_{E}(A_{j}) & = N_{j} \text{ for each }j=1,\ldots,p,
\\
\label{pfrhorn4} m_{E}(A_{0}) & = |I|-\sum_{j=1}^{p}N_{j}\geq 0.
\end{align}
\end{thm}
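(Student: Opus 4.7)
The plan is to reduce to the established Schur-Horn-type theorems for positive finite-rank operators: Theorem \ref{horn-ndec} when $|I|=\infty$, and the classical Theorem \ref{horn} when $|I|<\infty$. First translate so that $A_{0}=0$; both hypotheses are invariant under this. Set $N=\sum_{j=1}^{p}N_{j}$. The trace condition gives $\sum_{i\in I}d_{i}=\sum_{j=1}^{p}N_{j}A_{j}<\infty$, and a short argument from the $r=1$ upper exterior majorization inequality plus the trace shows $|I|\geq N$. Select the $N$ largest entries of $\{d_{i}\}_{i\in I}$ (ties broken arbitrarily), relabel them in nondecreasing order as $d'_{1}\leq \cdots\leq d'_{N}$, and index the remaining entries by integers $\leq 0$. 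By construction $d'_{i}\leq d'_{1}$ for all $i\leq 0$. Let $\{\lambda_{i}\}_{i=1}^{N}$ be the nondecreasing enumeration of the target eigenvalues, $\lambda_{i}=A_{s}$ for $M_{s-1}<i\leq M_{s}$, where $M_{s}:=\sum_{j=1}^{s}N_{j}$.

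The main step is to verify the Schur-Horn majorization
\[
\sum_{i\leq n}d'_{i}\geq \sum_{i=1}^{n}\lambda_{i},\qquad 1\leq n\leq N,
\]
with equality at $n=N$. The equality case is immediate from the trace condition. For $1\leq n<N$, the inequality is (by the trace) equivalent to $T_{N-n}\leq \sum_{i=n+1}^{N}\lambda_{i}$, where $T_{k}$ denotes the sum of the $k$ largest entries of $\{d_{i}\}_{i\in I}$. The key observation is the elementary inequality
\[
T_{k}\leq k\alpha+\sum_{d_{i}\geq\alpha}(d_{i}-\alpha)\qquad\text{valid for every }\alpha\in\R,\ k\in\N,
\]
obtained by writing $d_{i}=\min(d_{i},\alpha)+\max(d_{i}-\alpha,0)$ and summing over any $k$-element subset. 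Choosing $\alpha=A_{s}$, where $s$ is determined by $M_{s-1}\leq n<M_{s}$ (so $\lambda_{n+1}=A_{s}$), and applying the $s$-th upper exterior majorization inequality then yields
\[
T_{N-n}\leq (N-n)A_{s}+\sum_{j=s+1}^{p}N_{j}(A_{j}-A_{s})=(M_{s}-n)A_{s}+\sum_{j=s+1}^{p}N_{j}A_{j}=\sum_{i=n+1}^{N}\lambda_{i}.
\]
This single uniform argument handles both boundary ($n=M_{s-1}$) and intermediate values of $n$; identifying this elementary split (so that one need not do separate casework on whether the $(N-n)$-th largest diagonal entry lies above or below $A_{s}$) is the main technical obstacle, after which the calculation is routine.

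Finally, if $|I|=\infty$, Theorem \ref{horn-ndec} applied to $\{d'_{i}\}_{i=-\infty}^{N}$ and $\{\lambda_{i}\}_{i=1}^{N}$ produces a positive operator $E'$ on a Hilbert space of dimension $|I|$ with diagonal $\{d'_{i}\}$, $\sigma(E')\subseteq\{0,A_{1},\ldots,A_{p}\}$, $m_{E'}(A_{j})=N_{j}$ for $j\geq 1$, and $m_{E'}(0)=\infty=|I|-N$. If $|I|<\infty$, the classical Theorem \ref{horn} applied to the nonincreasing rearrangement of $\{d_{i}\}$ and the eigenvalue list consisting of $A_{p}$ (with multiplicity $N_{p}$), \ldots, $A_{1}$ (with multiplicity $N_{1}$), and $|I|-N$ trailing zeros produces the analogous operator on $\ell^{2}(I)$; the $T_{k}$-bound above translates directly into Horn's majorization condition, with equality at $n=|I|$ coming from the trace. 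In either case, a unitary permutation of basis vectors restoring the original indexing turns $E'$ into the desired operator $E$ with diagonal $\{d_{i}\}_{i\in I}$ satisfying \eqref{pfrhorn2}--\eqref{pfrhorn4}.
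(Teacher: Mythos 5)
Your proof is correct, but it takes a genuinely different and more elementary route than the paper. The paper proves Theorem \ref{pfrhorn} by appending an artificial eigenvalue $A_{p+1}=B>A_{p}$ with $N_{p+1}=\infty$, verifying that $\{d_{i}\}$ then satisfies \emph{interior} majorization by $\{(A_{j},N_{j})\}_{j=0}^{p+1}$ with $k_{0}=-|\{i:d_{i}=A_{p}\}|$, and invoking the machinery of Theorem \ref{inthorn}; the trace condition is then used to expel $B$ from $\sigma(E)$, and finite rank follows from summability. You instead bypass interior majorization entirely and reduce directly to Theorems \ref{horn-ndec} and \ref{horn}. The pivot is the clean elementary bound $T_{k}\leq k\alpha+\sum_{d_{i}\geq\alpha}(d_{i}-\alpha)$, valid for every $\alpha$ and every $k$, which converts the $s$-th upper exterior majorization inequality at $\alpha=A_{s}$ (with $s$ chosen so that $\lambda_{n+1}=A_{s}$) into exactly the Schur--Horn majorization $\sum_{i\leq n}d_{i}'\geq\sum_{i=1}^{n}\lambda_{i}$; I checked the algebra $(N-n)A_{s}+\sum_{j>s}N_{j}(A_{j}-A_{s})=(M_{s}-n)A_{s}+\sum_{j>s}N_{j}A_{j}=\sum_{i>n}^{N}\lambda_{i}$ and the $|I|\geq N$ claim (take $k=|I|$ and $\alpha=A_{1}$ in your $T_{k}$-bound, then use the trace), and both hold. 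The upshot: your argument avoids both the interior majorization framework and Kadison's theorem (which Theorem \ref{inthorn} ultimately depends on, through Theorem \ref{srim}), at the small cost of splitting into the cases $|I|=\infty$ and $|I|<\infty$. The paper's route is less self-contained for this theorem in isolation but keeps the sufficiency arguments throughout Sections 5--7 flowing through a single interior-majorization pipeline, which is why it is organized as it is.
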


\begin{remark}
Note that the number of terms in the diagonal sequence is not assumed to be equal to the sum of the multiplicities of the eigenvalues. Indeed, upper exterior majorization combined with the trace condition \eqref{pfrhorn1} guarantees that the number of terms is at least the sum of the multiplicities of the positive eigenvalues.
\end{remark}

\begin{proof} Without loss of generality we can assume that $A_0=0$. Fix some $B>A_{p}$ and set $A_{p+1}=B$ and $N_{p+1} = \infty$. We claim that $\{d_{i}\}_{i\in I}$ satisfies interior majorization by $\{(A_{j},N_{j})\}_{j=0}^{p+1}$ with $k_{0} = -|\{i\in I:d_{i} = A_{p}\}|$. From \eqref{pfrhorn1} we have
\[C(A_{p}) - D(A_{p}) = \sum_{d_{i}<A_{p}}d_{i} - \sum_{d_{i}\geq A_{p}}(B-d_{i}) = \sum_{i\in I}d_{i} - B|\{i\in I:d_{i} = A_{p}\}| = \sum_{j=1}^{p}A_{j}N_{j} + Bk_{0},\]
which shows \eqref{fulltrace1}. Using \eqref{ulmaj1}, for $r=1,\ldots,p$ we have
\begin{align*}
C(A_{r}) &  = \sum_{d_{i}<A_{r}}d_{i} = \sum_{j=1}^{p}A_{j}N_{j} - \sum_{d_{i}\geq A_{r}}d_{i}\\
 & = \sum_{j=1}^{p}A_{j}N_{j} - A_{r}|\{i\in I:A_{r}\leq d_{i}\leq A_{p}\}|  - \sum_{d_{i}\geq A_{r}}(d_{i}-A_{r})\\
 & \geq \sum_{j=1}^{p}A_{j}N_{j} - A_{r}|\{i\in I:A_{r}\leq d_{i}\leq A_{p}\}|  - \sum_{j=r+1}^{p}N_{j}(A_{j}-A_{r})\\
 & = \sum_{j=1}^{r}A_{j}N_{j} + A_{r}\left(-|\{i\in I:A_{r}\leq d_{i}\leq A_{p}\}|  + \sum_{j=r+1}^{p}N_{j}\right)\\
 & = \sum_{j=1}^{r}A_{j}N_{j} + A_{r}\left(k_{0}-|\{i\in I:A_{r}\leq d_{i}< A_{p}\}|  + \sum_{j=r+1}^{p}N_{j}\right).
\end{align*}
The above calculation shows \eqref{fullmaj1} and proves the claim that $\{d_{i}\}_{i\in I}$ satisfies interior majorization by $\{(A_{j},N_{j})\}_{j=0}^{p+1}$.

By Theorem \ref{inthorn}, there is a positive operator $E$ with diagonal $\{d_{i}\}_{i\in I}$, spectrum $\sigma(E)\subset\{A_{0},\ldots,A_{p+1}\}$ and \eqref{pfrhorn3}. The operator $E$ has a finite spectrum and summable diagonal. This implies that $E$ is a finite rank operator. From \eqref{pfrhorn} and the fact that $m_{E}(A_{j})=N_{j}$ for $j=1,\ldots,p$, we conclude that $A_{p+1}\notin\sigma(E)$ and thus we have \eqref{pfrhorn2}. Finally, \eqref{pfrhorn4} follows from \eqref{pfrhorn2}, and \eqref{pfrhorn3}.
\end{proof}

\begin{defn}\label{llmaj}
Let $\{A_{j}\}_{j=-m}^{0}$ be an increasing sequence. For each $j=-m,\ldots,-1$ let $N_{j}\in\N$ and let $N_{0}\in\N\cup\{\infty\}$. We say that a sequence $\{d_{i}\}_{i\in I}$ satisfies {\it lower exterior majorization} by $\{(A_{j},N_{j})\}_{j=-m}^{0}$ if for each $r=-m,-m+1,\ldots,0$
\begin{equation}\label{llmaj1}
\sum_{d_{i}\leq A_{r}}(A_{r}-d_{i}) \leq \sum_{j=-m}^{r-1}N_{j}(A_{r}-A_{j}).
\end{equation}
\end{defn}

\begin{remark}\label{r1}
A sequence $\{d_{i}\}_{i\in I}$ satisfies a {\it lower exterior majorization} by $\{(A_{j},N_{j})\}_{j=-m}^{0}$ if and only if  $\{-d_{i}\}_{i\in I}$ satisfies an upper exterior majorization by $\{(-A_{-j}, N_{-j})\}_{j=0}^{m}$.
\end{remark}

In light of Remark \ref{r1}, the following version of Horn's Theorem for negative finite rank operators follows from Theorem \ref{pfrhorn}.

\begin{thm}\label{nfrhorn} Let $\{A_{j}\}_{j=-m}^{0}$ be an increasing sequence with $A_{0}=0$, let $N_{j}\in\N$ for each $j=-m,-m+1,\ldots,-1$ and let $N_0\in\N\cup\{\infty\}$. If a sequence $\{d_{i}\}_{i\in I}$ in $[A_{-m},A_{0}]$ satisfies lower exterior majorization by $\{(A_{j},N_{j})\}_{j=-m}^{0}$ and
\begin{equation}\label{nfrhorn1}\sum_{i\in I}d_{i} = \sum_{j=-m}^{-1}N_{j}A_{j},\end{equation}
then there exists a self-adjoint operator $E$ with diagonal $\{d_{i}\}_{i\in I}$ and the following properties:

\begin{align}
\label{nfrhorn2}\sigma(E) &\subseteq \{A_{-m},\ldots,A_{0}\},
\\
\label{nfrhorn3} m_{E}(A_{j}) &= N_{j}\ \text{for each}\ j=-m,\ldots,-1,
\\
\label{nfrhorn4}m_{E}(A_{0}) &= |I| - \sum_{j=-m}^{-1}N_{j}\geq 0.
\end{align}
\end{thm}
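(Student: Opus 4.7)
The plan is to reduce Theorem \ref{nfrhorn} to Theorem \ref{pfrhorn} via the sign-reversal symmetry already flagged in Remark \ref{r1}. Concretely, I would work with the sequences
\[
d_i' := -d_i, \qquad A_j' := -A_{-j} \quad (j=0,1,\ldots,m), \qquad N_j' := N_{-j},
\]
so that $\{A_j'\}_{j=0}^m$ is an increasing sequence with $A_0' = 0$, and $\{d_i'\}_{i\in I}$ is a sequence in $[A_0', A_m'] = [0, -A_{-m}]$.

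The first step is to verify the hypotheses of Theorem \ref{pfrhorn} for the primed data. Upper exterior majorization for $\{d_i'\}$ by $\{(A_j', N_j')\}_{j=0}^m$ is exactly lower exterior majorization for $\{d_i\}$ by $\{(A_j, N_j)\}_{j=-m}^0$: for each $r = 0,1,\ldots,m$,
\[
\sum_{d_i' \ge A_r'}(d_i' - A_r') = \sum_{d_i \le A_{-r}}(A_{-r} - d_i) \le \sum_{j=-m}^{-r-1} N_j (A_{-r}-A_j) = \sum_{j=r+1}^{m} N_j' (A_j' - A_r'),
\]
by hypothesis \eqref{llmaj1}, which is Remark \ref{r1}. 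The trace identity \eqref{pfrhorn1} follows similarly: since $A_0' = 0$,
\[
\sum_{i\in I}(d_i' - A_0') = -\sum_{i\in I} d_i = -\sum_{j=-m}^{-1} N_j A_j = \sum_{j=1}^{m} N_j' A_j',
\]
where the middle equality is \eqref{nfrhorn1}.

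The second step is to invoke Theorem \ref{pfrhorn} to produce a positive operator $E'$ with diagonal $\{d_i'\}_{i\in I}$, spectrum $\sigma(E') \subseteq \{A_0', A_1', \ldots, A_m'\}$, multiplicities $m_{E'}(A_j') = N_j'$ for $j=1,\ldots,m$, and $m_{E'}(A_0') = |I| - \sum_{j=1}^m N_j'$. Then I set $E := -E'$. The operator $E$ is self-adjoint with diagonal $\{-d_i'\}_{i\in I} = \{d_i\}_{i\in I}$. Its spectrum satisfies
\[
\sigma(E) = -\sigma(E') \subseteq \{-A_m', \ldots, -A_1', -A_0'\} = \{A_{-m}, \ldots, A_{-1}, A_0\},
\]
giving \eqref{nfrhorn2}. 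For each $j=-m,\ldots,-1$,
\[
m_E(A_j) = m_{E'}(-A_j) = m_{E'}(A_{-j}') = N_{-j}' = N_j,
\]
which is \eqref{nfrhorn3}. Finally, $m_E(A_0) = m_{E'}(A_0') = |I| - \sum_{j=1}^m N_j' = |I| - \sum_{j=-m}^{-1} N_j \ge 0$, yielding \eqref{nfrhorn4}.

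There is essentially no obstacle here: the entire content of the proof is the bookkeeping check that the sign flip $d_i \mapsto -d_i$ and the index reversal $j \mapsto -j$ convert the lower exterior hypotheses into the upper exterior hypotheses of Theorem \ref{pfrhorn}, after which negating the constructed operator gives the required $E$. The only minor care needed is ensuring that the trace condition and the multiplicity bookkeeping for $A_0$ (whose multiplicity is not prescribed in the hypotheses, but is determined by $|I|$ minus the other multiplicities) transport correctly under this symmetry, which they do because $A_0 = 0 = A_0'$ is fixed by the negation.
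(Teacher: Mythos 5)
Your proof is correct and is exactly the paper's approach: the paper states that Theorem \ref{nfrhorn} follows from Theorem \ref{pfrhorn} in light of Remark \ref{r1}, which is precisely the sign-reversal and index-reversal reduction you carry out in detail.
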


To extend this to a general Horn's theorem for finite rank (not necessarily positive) operators we shall apply Theorems \ref{pfrhorn} and \ref{nfrhorn} to the positive and negative terms, respectively. Since the required trace conditions need not be satisfied we have the following ``decoupling lemma''.

\begin{lem}\label{dcup} Let $\{d_{i}\}_{i\in I}$ be a bounded sequence in $\R$ and let $\delta,\gamma>0$. Let $J \subset I$ be a subset such that $d_i \in [-\gamma,\delta]$ for all $i \in J$ and
\begin{equation}\label{dcup0}
\sum_{i\in J,\ d_{i}\leq 0}(d_{i}+\gamma)=\infty \quad\text{or}\quad \sum_{i\in J,\ d_{i}\geq 0}(\delta-d_{i})=\infty.
\end{equation}
Then for any $\eta\geq 0$ the following two hold.

(i) There is a sequence $\{\tilde{d}_{i}\}_{i\in I}$ such that $\tilde d_i \in [-\gamma,\delta]$ for all $i \in J$, $\tilde d_i=d_i$ for $i \in I \setminus J$, and
\begin{equation}\label{dcup1}
\sum_{i \in J, \ \tilde{d}_{i}<0} \tilde{d}_{i} = \sum_{i\in J,\  d_{i}<0}d_{i}- \eta \quad\text{and}\quad \sum_{i\in J,\ \tilde{d}_{i}>0} \tilde{d}_{i} = \sum_{i\in J,\ d_{i}>0}d_{i} + \eta.\end{equation}

(ii) If $\tilde{E}$ is a self-adjoint operator with diagonal $\{\tilde{d}_{i}\}_{i\in I}$, then there exists an operator $E$ unitarily equivalent to $\tilde{E}$ with diagonal $\{d_{i}\}_{i\in I}$. 
\end{lem}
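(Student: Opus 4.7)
By the symmetry $d_i \mapsto -d_i$ (which swaps the roles of $\gamma$ and $\delta$), we may assume without loss of generality that $\sum_{i \in J,\, d_i \le 0}(d_i + \gamma) = \infty$. The plan for part (i) is to apply Lemma \ref{movelemma}(i) on the interval $[-\gamma, \delta]$ with carefully chosen $I_0, I_1 \subset J$ and parameter $\eta_0$, treating two cases separately. \textbf{Case A:} $\sum_{i \in J,\, d_i \ge 0}(\delta - d_i) \ge \eta$. Pick disjoint finite sets $I_0 \subset \{i \in J : d_i \le 0\}$ and $I_1 \subset \{i \in J : d_i \ge 0\}$ (splitting $\{d_i = 0\}$ between them as needed) with $\sum_{I_0}(d_i + \gamma) \ge \eta$ and $\sum_{I_1}(\delta - d_i) \ge \eta$, and apply Lemma \ref{movelemma}(i) with $\eta_0 = \eta$. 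Since $\tilde d_i \le d_i \le 0$ on $I_0$ and $\tilde d_i \ge d_i \ge 0$ on $I_1$, a short bookkeeping using \eqref{ops2} (accounting for possible zero crossings among $\{d_i = 0\}$ in an aggregate way) yields \eqref{dcup1}.

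\textbf{Case B:} $\sum_{i \in J,\, d_i \ge 0}(\delta - d_i) < \eta$. Then $|\{i \in J : d_i = 0\}|$ must be finite (else the sum would include $|\{d_i = 0\}|\,\delta = \infty$), and hence $\sum_{i \in J,\, d_i < 0}(d_i + \gamma) = \infty$. Here we transfer mass across zero: pick $k \in \N$ with $k \ge \eta/\delta$, choose $F = \{i_1, \ldots, i_k\} \subset \{i \in J : d_i < 0\}$ whose values $d_{i_j}$ lie near $\sup\{d_i : i \in J,\, d_i < 0\}$, and then pick a finite $I_0 \subset \{i \in J \setminus F : d_i \le \min_F d_i\}$ with $\sum_{I_0}(d_i + \gamma) \ge \eta - \sum_F d_i$; the divergence of $\sum_{i \in J,\, d_i < 0}(d_i + \gamma)$ guarantees that such $F$ and $I_0$ exist. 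Apply Lemma \ref{movelemma}(i) with these $I_0$, $I_1 = F$, and $\eta_0 = \eta - \sum_F d_i > 0$, arranging the $\tilde d_{i_j}$ to be strictly positive; for instance set $\tilde d_{i_1} = \cdots = \tilde d_{i_{k-1}} = \delta$ and $\tilde d_{i_k} = \eta - (k-1)\delta \in (0, \delta]$, which one checks satisfies \eqref{ops0}--\eqref{ops2}. Then the new positive indices in $J$ are exactly $\{i \in J : d_i > 0\} \cup F$, whose sum is $\sum_{i \in J,\, d_i > 0} d_i + \eta$ by direct computation, and a symmetric bookkeeping gives the negative sum $\sum_{i \in J,\, d_i < 0} d_i - \eta$, establishing \eqref{dcup1}.

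For part (ii), in both cases the sequence $\{\tilde d_i\}_{i \in I}$ is produced by a single application of Lemma \ref{movelemma}(i), so Lemma \ref{movelemma}(ii) directly yields an operator $E$ unitarily equivalent to $\tilde E$ with diagonal $\{d_i\}_{i \in I}$. The main obstacle is Case B, where the ordering constraint $\max_{I_0} d_i \le \min_F d_i$ in Lemma \ref{movelemma} must be reconciled simultaneously with both sum bounds on $\eta_0$; this hinges on choosing $F$ near the top of $\{i \in J : d_i < 0\}$ (so that $\min_F d_i$ is not too negative) while using the divergence of $\sum_{i \in J,\, d_i < 0}(d_i + \gamma)$ to supply sufficiently many deeply negative indices for $I_0$.
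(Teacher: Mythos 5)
Your proof takes a genuinely different route from the paper's. The paper assumes WLOG that the nonnegative side is heavy ($\sum_{i \in J,\, d_i \ge 0}(\delta - d_i) = \infty$) and treats every $\eta > 0$ uniformly: it selects $M = \lceil \eta/\gamma \rceil$ indices $I_0$ and a larger-valued set $I_1$, \emph{both} inside $J^+ := \{i \in J : d_i \ge 0\}$, sets $\eta_0 = \eta + \sum_{I_0} d_i$, and after Lemma~\ref{movelemma} observes that $\sum_{I_0}(\tilde d_i + \gamma) = M\gamma - \eta < \gamma$ together with $\tilde d_i + \gamma \ge 0$ forces $\tilde d_i < 0$ for all $i \in I_0$; so $M$ originally nonnegative entries cross zero, and the bookkeeping follows. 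You instead take the negative side heavy and split by whether $\sum_{J^+}(\delta - d_i)$ exceeds $\eta$ or not, crossing zero only in the deficit case. Both strategies are workable, but yours has two real gaps.

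First, the boundary of Case A. You put the case $\sum_{i\in J,\, d_i \ge 0}(\delta - d_i) = \eta$ in Case A, but if this set $J^+$ is infinite (so $d_i \to \delta$), every \emph{finite} $I_1 \subset J^+$ has $\sum_{I_1}(\delta - d_i)$ strictly less than $\eta$, so no admissible $I_1$ exists and the application of Lemma~\ref{movelemma} with $\eta_0 = \eta$ fails. Case A should require strict inequality $>\eta$; the equality case must go to Case B.

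Second, and more substantively, the existence of $F$ and $I_0$ in Case B is asserted rather than proved. You need finite disjoint $F, I_0 \subset \{i \in J : d_i < 0\}$ with $|F| = k$, $\max_{I_0} d_i \le \min_F d_i$, and $\sum_{I_0}(d_i + \gamma) \ge \eta - \sum_F d_i$. Mere divergence of $\sum_{d_i < 0}(d_i + \gamma)$ does not hand these to you: the values may accumulate at several points of $[-\gamma, 0)$, and once $F$ is fixed the admissible pool $\{i : d_i \le \min_F d_i\}$ can shrink drastically (e.g.\ if $d_i = -1/i$, choosing $F$ ``near the top'' leaves only finitely many candidates for $I_0$, and one must then show the trade-off still works). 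The paper's proof devotes a careful accumulation-point argument on $J^+$ (considering whether the extracted monotone subsequence is nonincreasing or nondecreasing) precisely to produce such a pair while respecting the ordering constraint of Lemma~\ref{movelemma}. A symmetric argument on $\{d_i < 0\}$ is needed here; your final paragraph correctly identifies this as the main obstacle but does not carry it out, so the proof as written is incomplete at its crux.

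A minor further point: in Case B you specify $\tilde d$ on $F$ by hand and then invoke Lemma~\ref{movelemma}(ii). This is fine \emph{provided} one reads Lemma~\ref{movelemma}(ii) as applying to any $\tilde d$ satisfying the constraints of part (i), not merely the particular one its proof constructs; this interpretation should be stated, since the lemma's conclusions \eqref{ops0}--\eqref{ops2} do not determine $\tilde d$ uniquely. The paper sidesteps this by taking whatever $\tilde d$ part (i) returns and deducing its sign pattern purely from the constraints.
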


\begin{remark} If the series $\sum_{i\in J,\ d_{i}>0} d_i$ is divergent, then we interpret \eqref{dcup1} to mean that $\sum_{i\in J,\ \tilde{d}_{i}>0}\tilde{d}_{i}$ is also divergent, and similarly for $\sum_{i\in J,\ d_{i}< 0} d_i$.
\end{remark}

\begin{proof} If $\eta=0$ then we simply take $\tilde{d}_{i} = d_{i}$ for all $i\in I$. Thus, we may assume $\eta>0$.

Without loss of generality, we may assume 
\[\sum_{i\in J,\ d_{i}\geq 0}(\delta-d_{i})=\infty,\]
since the other case can be handled by applying the following argument to $\{-d_{i}\}$.

Set $M:=\lceil\eta/\gamma\rceil$ so that
\begin{equation}\label{dcup2}(M-1)\gamma<\eta\leq M\gamma.\end{equation}
Next we will show that there exists  two finite sets $I_{0},I_{1}\subseteq J^{+}:=\{i\in J :d_{i}\geq 0\}$ with the following three properties:
\begin{align}
\label{dcup3}\max_{i\in I_{0}}d_{i} & \leq \min_{i\in I_{1}}d_{i},
\\
\label{dcup4}\sum_{i\in I_{1}}(\delta-d_{i}) & >M\left(\min_{i\in I_{1}}d_{i}\right) + \eta,
\\
\label{dcup5}|I_{0}| &=M.
\end{align}

Since $J^{+}$ is infinite, there is some $x\in[0,\delta]$ such that for all $\eps>0$ the set $\{i\in J^{+}:d_{i}\in(x-\eps,x+\eps)\}$ is infinite.
First, we consider the case that $x=\delta$ is the only such point. By \eqref{dcup0} the set $\{i\in J^{+}:d_{i}<\delta\}$ must be infinite. Let $I_{0}$ be the set of indices of the $M$ smallest terms of $\{d_{i}\}_{i\in J^{+}}$. Since
\[\sum_{i\in J^{+}\setminus I_{0}}(\delta-d_{i})=\infty\]
we can find a finite subset $I_{1}\subset I\setminus I_{0}$ satisfying \eqref{dcup4}.
Next, assume $x\in[0,\delta)$. Then, there exists a sequence $\{i_{n}\}_{n\in\N}$ of distinct elements in $J^+$ such that $\{d_{i_{n}}\}_{i\in\N}$ is monotone and converges to $x$. This gives us two possibilities.

{\bf Case 1:} The sequence $\{d_{i_{n}}\}_{i\in\N}$ is nonincreasing. Since $x<\delta$ we can choose $N\in\N$ such that
\[\sum_{n=1}^{N}(\delta-d_{i_{n}})>Md_{i_{1}}+\eta.\]
Let $I_{1} = \{i_{1},\ldots,i_{N}\}$ and let $I_{0}= \{i_{N+1},\ldots,i_{N+M}\}$. Since $d_{i_{1}}\geq d_{i_{N}}=\min_{i\in I_{1}}d_{i}$, the sets $I_{0}$ and $I_{1}$ clearly satisfy \eqref{dcup3}, \eqref{dcup4} and \eqref{dcup5}.

{\bf Case 2:} The sequence $\{d_{i_{n}}\}_{i\in\N}$ is nondecreasing.
Set $I_{0} = \{i_{1},\ldots,i_{M}\}$. Since $d_{i_{n}}\leq x<\delta$ for all $n\in \N$, we have
\[\sum_{n=M+1}^{\infty}(\delta-d_{i_{n}})=\infty.\]
Thus, we can find a finite set $I_{1}\subset \{i_{M+1},i_{M+2},\ldots\}$ such that
\[\sum_{i\in I_{1}}(\delta-d_{i})>Mx + \eta\geq M\left(\min_{i\in I_{1}}d_{i}\right)+\eta.
\]
Consequently, we have shown the existence of sets $I_{0}$ and $I_{1}$ which satisfy \eqref{dcup3}--\eqref{dcup5}. Set
\[\eta_{0} = \eta + \sum_{i\in I_{0}}d_{i},\]
and note that
\[\sum_{i\in I_{0}}(d_{i}+\gamma) = \sum_{i\in I_{0}}d_{i} + M\gamma \geq \sum_{i\in I_{0}}d_{i} + \eta=\eta_{0}.\]
By \eqref{dcup3}--\eqref{dcup5} we have
\[
\sum_{i\in I_{1}}(\delta-d_{i}) > M\left(\min_{i\in I_{1}}d_{i}\right) + \eta  \geq M \left(\max_{i\in I_{0}}d_{i}\right) + \eta \geq \sum_{i\in I_{0}}d_{i} + \eta = \eta_{0}.\]
Thus, we can apply Lemma \ref{movelemma} (i) to the sequence $\{d_{i}\}_{i\in I}$ with $A=-\gamma$, $B=\delta$, and with $\eta_{0}$ defined as above to obtain a sequence $\{\tilde{d}_{i}\}_{i\in I}$. Observe that Lemma \ref{movelemma} (ii) immediately yields part (ii). Hence, it remains to verify \eqref{dcup1}.

By \eqref{ops2} and \eqref{dcup2} we have
\[\sum_{i\in I_{0}}(\tilde{d}_{i}+\gamma) = \sum_{i\in I_{0}}(d_{i}+\gamma)-\eta_{0} = M\gamma-\eta < \gamma.\]
This implies that $\tilde{d}_{i}<0$ for all $i\in I_{0}$. Thus, by \eqref{ops1} and \eqref{ops2} we have
\[
\sum_{i\in J,\ \tilde{d}_{i}<0}\tilde{d}_{i} =\sum_{i\in J,\ d_{i}<0}d_{i} + \sum_{i\in I_{0}}\tilde{d}_{i} =\sum_{i\in J,\ d_{i}< 0}d_{i} -\eta.\]
Likewise, we have
\begin{equation*}\begin{split}
\sum_{i\in J,\ \tilde{d}_{i}>0}\tilde{d}_{i} &  = \sum_{i\in J\setminus(I_{0}\cup I_{1})}d_{i} + \sum_{i\in I_{1}}\tilde{d}_{i} = \sum_{i\in J\setminus(I_{0}\cup I_{1})}d_{i} + \sum_{i\in I_{1}}d_{i} + \eta_{0}
\\
 & = \sum_{i\in J\setminus  I_{0} }d_{i} + \sum_{i\in I_{0}}d_{i} + \eta = \sum_{i\in J,\ d_{i}>0}d_{i}+ \eta.\\
\end{split}\end{equation*}
The completes the proof of the lemma. 
\end{proof}

Finally, we are ready to prove the Schur-Horn Theorem for general finite rank operators on an infinite dimensional (separable) Hilbert space.

\begin{thm}\label{frhorn} Let $m,p\in\N$ and $\{A_{j}\}_{j=-m}^{p}$ be an increasing sequence with $A_{0}=0$. Let $N_{j}\in\N$ for $-m\le j \le p$, $j \ne 0$, and let $N_{0}=\infty$. Let $\{d_{i}\}_{i=1}^{\infty}$ be a sequence in $[A_{-m},A_{p}]$. There is a finite rank, self-adjoint operator $E$ with diagonal $\{d_{i}\}$ and eigenvalue-multiplicity list $\{(A_{j},N_{j})\}_{j=-m}^{p}$ if and only if $\{d_{i}\}$ satisfies the following three conditions:
\begin{enumerate}
\item lower exterior majorization by $\{(A_{j},N_{j})\}_{j=-m}^{0}$, 
\item upper exterior majorization by $\{(A_{j},N_{j})\}_{j=0}^{p}$, 
\item the trace condition
\begin{equation}\label{frhorn1}\sum_{i=1}^\infty d_{i} = \sum_{j=-m}^{-1}N_{j}A_{j} + \sum_{j=1}^{p}N_{j}A_{j}.\end{equation}
\end{enumerate}
\end{thm}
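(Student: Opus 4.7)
The trace identity \eqref{frhorn1} is simply $\sum_i d_i=\tr E$; both sides are absolutely convergent because a finite-rank operator is trace class and $\sum|d_i|\le\|E\|_1<\infty$. The lower and upper exterior majorizations follow from Theorems \ref{ext} and \ref{exts}, applied respectively to the discrete lower eigenvalues $A_{-m}<\ldots<A_{-1}$ (with the remainder of $\sigma(E)$ sitting in $[A_0,\infty)=[0,\infty)$) and the discrete upper eigenvalues $A_1<\ldots<A_p$ (with the remainder of $\sigma(E)$ in $(-\infty,0]$).

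\textbf{Sufficiency.} The plan is to use the decoupling Lemma \ref{dcup} to redistribute mass between the positive and negative halves of the diagonal so that the modified sequence splits into two pieces to which Theorems \ref{nfrhorn} and \ref{pfrhorn} can be applied separately. Write $S^-=\sum_{d_i<0}d_i$, $S^+=\sum_{d_i>0}d_i$, $T^-=\sum_{j=-m}^{-1}N_jA_j$, and $T^+=\sum_{j=1}^p N_jA_j$. The exterior majorizations at $r=0$ read $S^-\ge T^-$ and $S^+\le T^+$, and combining them with the trace identity \eqref{frhorn1} produces the non-negative defect
\[
\eta:=S^--T^-=T^+-S^+\ge 0.
\]

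The crucial technical step is to localize the Lemma \ref{dcup} modification so that the exterior majorizations are not broken. Choose $\gamma\in(0,-A_{-1})$ and $\delta\in(0,A_1)$, and put $J:=\{i\in I:d_i\in[-\gamma,\delta]\}$. Because $\sum|d_i|<\infty$, the complement $I\setminus J$ is finite, so $J$ is cofinite and the tail of $d_i\to 0$ lies inside $J$; in particular the hypothesis \eqref{dcup0} of Lemma \ref{dcup} is satisfied. Applying the lemma with this $J,\gamma,\delta,\eta$ yields $\{\tilde d_i\}_{i\in I}$ with $\tilde d_i=d_i$ off $J$, $\tilde d_i\in[-\gamma,\delta]$ on $J$, and $\sum_{\tilde d_i<0}\tilde d_i=T^-$, $\sum_{\tilde d_i>0}\tilde d_i=T^+$. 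For any $r\in\{-m,\ldots,-1\}$, the bound $A_r\le A_{-1}<-\gamma$ forces $\tilde d_i\le A_r$ to imply $i\notin J$, where $\tilde d_i=d_i$; hence the lower exterior majorization for $\{\tilde d_i\}$ at such $r$ reduces to that of $\{d_i\}$, while at $r=0$ it is equality by construction. A symmetric argument using $A_r\ge A_1>\delta$ handles the upper exterior majorization.

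Now partition $I=I^-\sqcup I^+$ with $I^-=\{\tilde d_i<0\}$ and $I^+=\{\tilde d_i\ge 0\}$. The sequence $\{\tilde d_i\}_{i\in I^-}\subset[A_{-m},0)$ inherits lower exterior majorization by $\{(A_j,N_j)\}_{j=-m}^0$ and has trace $T^-$, so Theorem \ref{nfrhorn} delivers a self-adjoint $E^-$ with $\sigma(E^-)\subseteq\{A_{-m},\ldots,0\}$, $m_{E^-}(A_j)=N_j$ for $-m\le j\le -1$, and $m_{E^-}(0)=|I^-|-\sum_{j=-m}^{-1}N_j$. Theorem \ref{pfrhorn} applied to $\{\tilde d_i\}_{i\in I^+}$ analogously produces $E^+$. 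Then $\tilde E=E^-\oplus E^+$ has diagonal $\{\tilde d_i\}_{i\in I}$ and the prescribed eigenvalue-multiplicity list, with $m_{\tilde E}(0)=\infty=N_0$ since $|I^-|+|I^+|=|I|=\infty$ while $\sum_{j\ne 0}N_j<\infty$. Finally, Lemma \ref{dcup}(ii) furnishes a unitarily equivalent operator $E$ with the original diagonal $\{d_i\}$. The main obstacle is preservation of the exterior majorizations under the decoupling, and it is precisely the choice of $J$ — confining all modified values to the band strictly between $A_{-1}$ and $A_1$ — that makes this work.
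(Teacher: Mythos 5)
Your proof is correct and follows essentially the same route as the paper's: derive the defect $\eta\ge 0$ from the $r=0$ exterior majorizations and the trace identity, confine the decoupling modification of Lemma \ref{dcup} to the band $[-\gamma,\delta]$ strictly between $A_{-1}$ and $A_1$ so that the exterior majorizations for $|r|\ge 1$ are untouched, check the $r=0$ condition becomes equality, split the modified sequence at $0$ and apply Theorems \ref{pfrhorn} and \ref{nfrhorn}, then transport back via Lemma \ref{dcup}(ii). The only cosmetic difference is that the paper sets $\gamma=\delta$, which is immaterial.
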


\begin{proof} First, assume $\{d_{i}\}$ satisfies conditions (i),(ii) and (iii). Set
\[\eta = \sum_{j=1}^{p}N_{j}A_{j} - \sum_{d_{i}\geq 0}d_{i}.\]

Fix any $0<\delta<\min\{A_{1},-A_{-1}\}$ and define $J = \{i\in\N:d_{i}\in[-\gamma,\delta]\}$, where $\gamma=\delta$. Since the sequence $\{d_{i}\}_{i=1}^{\infty}$ is absolutely summable, the sequence $\{d_{i}\}_{i\in J}$ is also an infinite absolutely summable sequence. Thus, \eqref{dcup0} holds and we can apply Lemma \ref{dcup} to obtain a sequence $\{\tilde d_{i}\}_{i=1}^{\infty}$.

By Lemma \ref{dcup} (i) the values of $d_i$ and $\tilde d_i$, which lie outside of the interval $[-\delta,\delta]$, must coincide. Hence, $\{\tilde d_i\}_{i=1}^\infty$ automatically satisfies upper upper exterior majorization \eqref{ulmaj1} for each $r=1,\ldots,p$. To verify the same for $r=0$ we use
\eqref{dcup1}
\begin{align*}
\sum_{\tilde{d}_{i}>0}\tilde{d}_{i} & = \sum_{d_{i}>\delta}d_{i} + \sum_{i\in J,\ \tilde{d}_{i}>0}\tilde{d}_{i} = \sum_{d_{i}>\delta}d_{i} + \sum_{i\in J,\ d_{i}>0}d_{i} + \eta = \sum_{d_{i}\geq 0}d_{i} + \eta = \sum_{j=1}^{p}N_{j}A_{j}.
\end{align*}
Likewise, 
$\{\tilde d_i\}_{i=1}^\infty$ automatically satisfies lower exterior majorization \eqref{llmaj1} for each $r=-m,\ldots,-1$. The same holds for $r=0$ by \eqref{dcup1} and \eqref{frhorn1}
\begin{equation}\label{frhorn4}
\begin{split}
\sum_{\tilde{d}_{i}<0}\tilde{d}_{i} & = \sum_{d_{i}<-\delta}d_{i} + \sum_{i\in J,\ \tilde{d}_{i}<0}\tilde{d}_{i} = \sum_{d_{i}<0}d_{i} - \eta = \sum_{d_{i}<0}d_{i} - \sum_{j=1}^{p}N_{j}A_{j} + \sum_{d_{i}\geq 0}d_{i}\\
 & = \sum_{i=1}^{\infty}d_{i} - \sum_{j=1}^{p}N_{j}A_{j} = \sum_{i=-m}^{-1}N_{j}A_{j}.
\end{split}\end{equation}

Let $I^+=\{i\in \N: \tilde d_i \ge 0\}$ and $I^-=\{i\in \N: \tilde d_i < 0\}$.
The above shows that $\{\tilde{d}_{i}\}_{i\in I^+}$ satisfies upper exterior majorization by $\{(A_{j},N_{j})\}_{j=0}^{p}$ and \eqref{pfrhorn1} holds.
Likewise, $\{\tilde{d}_{i}\}_{i\in I^-}$ satisfies lower exterior majorization by $\{(A_{j}, N_{j})\}_{j=-m}^{0}$ and \eqref{nfrhorn1} holds.
Theorem \ref{pfrhorn} implies that there is an operator $\tilde{E}_{+}$ with $\sigma(\tilde{E}_{+}) \subseteq \{A_{0},A_{1},\ldots,A_{p}\}$, $m_{\tilde{E}_{+}}(A_{j}) = N_{j}$ for all $j=1,\ldots,p$, and diagonal $\{\tilde{d}_{i}\}_{i\in I^+}$. Likewise,
Theorem \ref{nfrhorn} implies that there is an operator $\tilde{E}_{-}$ with $\sigma(\tilde{E}_{-}) \subseteq \{A_{-m},A_{-m+1},\ldots,A_{0}\}$, $m_{\tilde{E}_{-}}(A_{j}) = N_{j}$ for all $j=-m,\ldots,-1$, and diagonal $\{\tilde{d}_{i}\}_{i\in I^-}$.

Since either $I^+$ or $I^-$, or both, are infinite, by \eqref{pfrhorn4} or \eqref{nfrhorn4}, the operator $\tilde{E}=\tilde{E}_{+}\oplus\tilde{E}_{-}$ has eigenvalue-multiplicity list $\{(A_{j},N_{j})\}_{j=-m}^{p}$ and diagonal $\{\tilde{d}_{i}\}_{i\in\N}$. By Lemma \ref{dcup} (ii) there is an operator $E$, unitarily equivalent to $\tilde{E}$ with diagonal $\{d_{i}\}_{i\in \N}$. This completes the proof that (i), (ii) and (iii) are sufficient. 

Conversely, assume that $E$ has eigenvalue-multiplicity list $\{(A_{j},N_{j})\}_{j=-m}^{p}$ and diagonal $\{d_{i}\}$. Parts (i) and (ii) are immediate consequences of Theorems \ref{ext} and \ref{exts}, respectively. On the other hand,  \eqref{frhorn1} follows by considering the trace of $E$.
\end{proof}

\section{Horn's Theorem for operators with finite spectrum}

Horn's Theorem for operators with finite spectrum breaks into three cases depending on the number of infinite multiplicities. We have already considered the case of exactly one infinite multiplicity in Theorem \ref{frhorn}. Theorem \ref{2infhorn} deals with the ``summable'' case of operators with two infinite multiplicities, that is when $C(B/2)$ and $D(B/2)$ are finite. Theorem \ref{3infhorn} shows the sufficiency of the ``non-summable'' case, which includes operators with three or more infinite multiplicities. The combination of these two results yields the sufficiency part of Theorem \ref{fullthm}.

\begin{thm}\label{2infhorn} Let $\{(A_{j},N_{j})\}_{j=-m}^{n+p+1}$ be as in Definition \ref{conv} and, in addition, $N_j<\infty$ for all $j=1,\ldots,n$. Let $\{d_{i}\}_{i\in I}$ be a sequence in $[A_{-m},A_{n+p+1}]$ which satisfies the following four conditions:
\begin{enumerate}
\item lower exterior majorization by $\{(A_{j},N_{j})\}_{j=-m}^{0}$, 
\item upper exterior majorization by $\{(A_{j},N_{j})\}_{j=p+1}^{n+p+1}$, 
\item interior majorization by $\{(A_{j},N_{j})\}_{j=-m}^{n+p+1}$ if $n\ge 1$, and otherwise, if $n=0$, the trace condition (both of them implicitly require that $C(B/2),D(B/2)<\infty$)
\[
\exists k\in \Z \qquad C(B/2) -D(B/2) =  \sum_{\genfrac{}{}{0pt}{}{j=-m}{j \not= 0,1}}^{p+1} A_j N_j +k B,
\]
\item $|\{i: d_{i}<B/2\}| = |\{i:d_i \ge B/2 \}|=\infty$.
\end{enumerate}
Then, there exists a self-adjoint operator $E$ with eigenvalue-multiplicity list $\{(A_{j},N_{j})\}_{j=-m}^{n+p+1}$ and diagonal $\{d_{i}\}_{i\in I}$.
\end{thm}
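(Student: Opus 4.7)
The plan is to use the decoupling lemma (Lemma~\ref{dcup}) twice to modify $\{d_i\}$ into a new diagonal $\{\hat d_i\}$ that partitions $I$ into three pieces supported respectively in $[A_{-m},0]$, $[0,B]$, and $[B,A_{n+p+1}]$, with exact trace balances chosen so that Theorems~\ref{nfrhorn}, \ref{inthorn}, and \ref{pfrhorn} apply to the three pieces. The direct sum of the operators produced on each piece will realize $\{\hat d_i\}$, and two invocations of Lemma~\ref{dcup}(ii) will then transfer back to an operator with the original diagonal $\{d_i\}$.

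Concretely, first I would fix $\delta>0$ smaller than $\min\{A_1,\,B-A_n,\,-A_{-1},\,A_{n+2}-B\}$ and note that the sets $J^-_\delta:=\{i:|d_i|\le\delta\}$ and $J^+_\delta:=\{i:|d_i-B|\le\delta\}$ are infinite. This follows from the finiteness of $C(B/2),D(B/2)$ implicit in (iii), combined with (iv): an infinite bounded sequence whose sum converges absolutely must accumulate at $0$, and symmetrically for the upper end. Setting
\[
\eta_-:=\sum_{d_i<0}d_i-\sum_{j=-m}^{-1}N_jA_j,\qquad \eta_+:=\sum_{j=n+2}^{n+p+1}N_j(A_j-B)-\sum_{d_i>B}(d_i-B),
\]
the $r=0$ case of lower exterior majorization forces $\eta_-\ge 0$, and the $r=n+1$ case of upper exterior majorization forces $\eta_+\ge 0$. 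I would then apply Lemma~\ref{dcup} to $\{-d_i\}$ on $J^-_\delta$ with parameter $\eta_-$ to obtain $\{d_i^{(1)}\}$ whose negative part sums to exactly $\sum_{j=-m}^{-1}N_jA_j$, and then apply Lemma~\ref{dcup} to $\{d_i^{(1)}-B\}$ on $J^+_\delta$ with parameter $\eta_+$ to obtain $\{\hat d_i\}$ whose ``above $B$'' part sums to $\sum_{j=n+2}^{n+p+1}N_j(A_j-B)$. Because all modifications remain inside $(-\delta,\delta)\cup(B-\delta,B+\delta)$, they do not cross any interior threshold $A_1,\ldots,A_n$, so the relevant $C(\cdot)$ and $D(\cdot)$ counts at these thresholds change only by the fixed ``outer'' contributions.

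On the partition $I^-:=\{i:\hat d_i\le 0\}$, $I^0:=\{i:0\le\hat d_i\le B\}$, $I^+:=\{i:\hat d_i\ge B\}$ I would apply Theorem~\ref{nfrhorn} to $\{\hat d_i\}_{i\in I^-}$ and Theorem~\ref{pfrhorn} (translated so that $A_{n+1}=B$ becomes the base) to $\{\hat d_i\}_{i\in I^+}$, producing $E^-$ and $E^+$ with the correct multiplicities at $A_{-m},\ldots,A_{-1}$ and $A_{n+2},\ldots,A_{n+p+1}$ respectively. For $\{\hat d_i\}_{i\in I^0}$ I would verify interior majorization by $\{(A_j,N_j)\}_{j=0}^{n+1}$ using the identities $C_{I^0}(A_r)=C(A_r)-\sum_{j=-m}^{-1}N_jA_j$ and $D_{I^0}(A_r)=D(A_r)+\sum_{j=n+2}^{n+p+1}N_j(A_j-B)$ (valid for $r=1,\ldots,n$) together with the trace-parameter shift $k\mapsto k+\sum_{j=n+2}^{n+p+1}N_j$. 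Theorem~\ref{inthorn} then yields $E^0$. A short case analysis on the finiteness of $|I^\pm|$ and on which of $0,B$ are limit points of $\{\hat d_i\}_{i\in I^0}$, using (iv), confirms that the infinite multiplicities at $A_0=0$ and $A_{n+1}=B$ in $\tilde E:=E^-\oplus E^0\oplus E^+$ are achieved. The operator $\tilde E$ then has diagonal $\{\hat d_i\}$ and the prescribed eigenvalue-multiplicity list, and two applications of Lemma~\ref{dcup}(ii) produce the desired $E$.

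The main obstacle is the bookkeeping to confirm that $\{\hat d_i\}_{i\in I^0}$ satisfies interior majorization in the sense of Definition~\ref{mar} after the two outer masses have been extracted, and the parallel combinatorial check that the infinite-multiplicity totals at $0$ and $B$ come out right. The former is a careful transfer of \eqref{fulltrace1} and \eqref{fullmaj1} from the full sequence to the restricted one via the identities above; the latter breaks into a handful of cases according to which of $I^-$, $I^+$, $\{i\in I^0:\hat d_i<A_1\}$, $\{i\in I^0:\hat d_i>A_n\}$ are infinite. The degenerate subcase $n=0$ is easier: (iii) becomes a Kadison-type trace condition, which the construction of $\{\hat d_i\}$ delivers directly, and Theorem~\ref{inthorn} specializes accordingly, so the same scheme applies without change.
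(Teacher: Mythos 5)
Your proposal matches the paper's proof in all essential respects: the same choice of $\delta$, the same two applications of the decoupling Lemma~\ref{dcup} (first near $0$, then near $B$) with the same surplus parameters $\eta_-,\eta_+$ defined from the $r=0$ and $r=n+1$ exterior majorization slacks, the same three-way split of the modified sequence fed into Theorems~\ref{nfrhorn}, \ref{inthorn}, \ref{pfrhorn}, the same identities transferring $C(\cdot)$, $D(\cdot)$ and the trace parameter to the middle piece, the same case analysis for the infinite multiplicities at $0$ and $B$, and the same two uses of Lemma~\ref{dcup}(ii) to transfer back. (Applying Lemma~\ref{dcup} to $\{-d_i\}$ rather than to $\{d_i\}$ is harmless since condition \eqref{dcup1} is symmetric under negation, and the overlap at $\hat d_i\in\{0,B\}$ in your ``partition'' is a cosmetic slip the paper avoids by using strict inequalities on one side.)
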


\begin{proof} We will only deal with the case that $m,p\geq 1$ since the case that $m=p=0$ is Theorem \ref{inthorn} and the case where one of $m$ or $p$ is equal to zero is a slight modification of the argument below.
Let $\delta>0$ such that $2\delta<\min\{-A_{-1},A_{1},B-A_{n},A_{n+2}-B,B\}$. Let
\[J_{0} = \{i:d_{i}\in(-\delta,\delta)\} \quad \text{and} \quad J_{B} = \{i:d_{i}\in (B-\delta,B+\delta)\}.\]

Since $C(B/2),D(B/2)<\infty$, the assumption (iv) implies that $\{d_{i}\}_{i\in J_{0}}$ and $\{B-d_{i}\}_{i\in J_{B}}$ are infinite absolutely summable sequences. Define 
\[\eta_{0} =  \sum_{d_{i}<0}d_{i} - \sum_{j=-m}^{-1}N_{j}A_{j}\quad\text{and}\quad \eta_{B} = \sum_{j=n+2}^{n+p+1}N_{j}(A_{j}-B) - \sum_{d_{i}> B}(d_{i}-B).\]
From \eqref{ulmaj1} and \eqref{llmaj1} we see that $\eta_{0},\eta_{B}\geq 0$.
Applying Lemma \ref{dcup} to $\{d_i\}_{i\in I}$ on the interval $[-\delta,\delta]$ and $J_0 \subset I$, there is a sequence $\{d'_{i}\}_{i\in I}$ such that
\[
\sum_{i\in J_{0},\ d'_{i}<0} d'_{i} = \sum_{i\in J_{0},\ d_{i}<0 }d_{i} - \eta_{0}\quad
\text{and}
\quad\sum_{i\in J_{0},\ d'_{i}>0} d'_{i} = \sum_{i\in J_{0},\ d_{i}>0}d_{i} + \eta_{0}.
\]
Applying again Lemma \ref{dcup} to the sequence $\{d'_{i}-B\}_{i\in I}$ on the interval $[-\delta,\delta]$ and $J_B \subset I$, there is a sequence $\{\tilde{d}_{i}-B\}_{i\in I}$ such that
\[
\sum_{i\in J_{B},\  \tilde{d}_{i}<B}(\tilde{d}_{i}-B) = \sum_{i\in J_{B},\ d_{i}<B}(d_{i}-B) - \eta_{B}
\quad
\text{and}
\quad\sum_{i\in J_{B},\ \tilde{d}_{i}>B }(\tilde{d}_{i}-B) = \sum_{i\in J_{B},\ d_{i}>B} (d_{i}-B) + \eta_{B}.
\]

Observe also that the values of $d_i$ and $d'_i$, which lie outside of the interval $[-\delta,\delta]$, must coincide. The same is true for the values of $d'_i$ and $\tilde d_i$, which lie outside of the interval  $[B-\delta,B+\delta]$.
Thus, we have
\begin{equation}\label{2infhorn1}\sum_{\tilde{d}_{i}<0}\tilde{d}_{i} =
\sum_{d'_{i}<0} d'_{i} 
= \sum_{\genfrac{}{}{0pt}{}{d'_{i}<0}{i\in J_{0}}} d'_{i} + \sum_{\genfrac{}{}{0pt}{}{d_{i}<0}{i\notin J_{0}}} d_{i} = \sum_{d_{i}<0}d_{i} - \eta_{0} = \sum_{j=-m}^{-1}N_{j}A_{j}\end{equation}
and
\begin{equation}\label{2infhorn2}
\sum_{\tilde{d}_{i}>B}(\tilde d_{i}-B) 
= \sum_{\genfrac{}{}{0pt}{}{\tilde{d}_{i}>B}{i\in J_{B}}}(\tilde{d}_{i}-B) +\sum_{\genfrac{}{}{0pt}{}{d_{i}>B}{i\notin J_{B}}}(d_{i}-B) = \sum_{d_{i}>B}(d_{i}-B) + \eta_{B}= \sum_{j=n+2}^{n+p+1}N_{j}(A_{j}-B).\end{equation}

Let $I^-=\{i\in I: \tilde d_i < 0\}$, $I^0=\{i\in I: \tilde d_i \in [0,B]\}$, and $I^+= \{i\in I: \tilde d_i >B \}$.
The above observation and \eqref{2infhorn1} imply that $\{\tilde{d}_{i}\}_{i\in I^-}$ satisfies lower exterior majorization by $\{(A_{j},N_{j})\}_{j=-m}^{0}$ and the trace condition \eqref{nfrhorn1}. Theorem \ref{nfrhorn} implies that there is a self-adjoint operator $\tilde{E}_{-}$ with $\sigma(\tilde{E}_{-}) \subseteq \{A_{-m},\ldots,A_{0}\}$, $m_{\tilde{E}_{-}}(A_{j}) = N_{j}$ for each $j=-m,\ldots,-1$, diagonal $\{\tilde{d}_{i}\}_{i\in I^-}$ and
\begin{equation}
\label{2infhorn3}m_{\tilde{E}_{-}}(A_{0}) = |I^-| - \sum_{j=-m}^{-1}N_{j}.\end{equation}
Likewise, $\{\tilde{d}_{i}\}_{i\in I^+}$ satisfies upper exterior majorization by $\{(A_{j},N_{j})\}_{j=n+1}^{n+p+1}$ as well as the trace condition 
\[
\sum_{i\in I^+}(\tilde d_{i}-B) = \sum_{j=n+1}^{n+p+1}N_{j}(A_{j}-B).
\] 
By Theorem \ref{pfrhorn} there is a self-adjoint operator $\tilde{E}_{+}$ with $\sigma(\tilde{E}_{+}) \subseteq \{A_{n+1},\ldots,A_{n+p+1}\}$, $m_{\tilde{E}_{+}}(A_{j}) = N_{j}$ for $j=n+2,\ldots,n+p+1$, diagonal $\{\tilde{d}_{i}\}_{i\in I^+}$ and
\[m_{\tilde{E}_{+}}(A_{n+1}) = |I^+| - \sum_{j=n+2}^{n+p+2}N_{j}.\]

We claim that the sequence $\{\tilde{d}_{i}\}_{i\in I^0}$ satisfies interior majorization by $\{(A_{j},N_{j})\}_{j=0}^{n+1}$. Indeed, for any $\alpha\in[A_{1},A_{n}]$ we have
\begin{equation}\label{2infhorn7}
\sum_{\tilde{d}_{i}<\alpha}\tilde{d}_{i} = \sum_{\genfrac{}{}{0pt}{}{d_{i}<\alpha}{i\notin J_{0}}}d_{i} + \sum_{\genfrac{}{}{0pt}{}{\tilde{d}_{i}<0}{i\in J_{0}}}\tilde{d}_{i} + \sum_{\genfrac{}{}{0pt}{}{\tilde{d}_{i}>0}{i\in J_{0}}}\tilde{d}_{i} = \sum_{\genfrac{}{}{0pt}{}{d_{i}<\alpha}{i\notin J_{0}}}d_{i} + \sum_{\genfrac{}{}{0pt}{}{d_{i}<0}{i\in J_{0}}}d_{i} - \eta_{0} + \sum_{\genfrac{}{}{0pt}{}{d_{i}>0}{i\in J_{0}}}d_{i} + \eta_{0} =  \sum_{d_{i}<\alpha}d_{i}.\end{equation}
By a similar calculation, for any $\alpha\in[A_{1},A_{n}]$ we have
\begin{equation}\label{2infhorn8}\sum_{\tilde{d}_{i}\geq \alpha}(B-\tilde{d}_{i}) = \sum_{d_{i}\geq \alpha}(B-d_{i}).\end{equation}
Using \eqref{2infhorn1}, \eqref{2infhorn2}, \eqref{2infhorn7}, and \eqref{2infhorn8} we calculate
\begin{equation}\label{2infhorn5}\begin{split}
\sum_{0\leq \tilde{d}_{i}<A_{n}}\tilde{d}_{i} - \sum_{A_{n}\leq \tilde{d}_{i}\leq B}(B & -\tilde{d}_{i})  = \sum_{\tilde{d}_{i}<A_{n}}\tilde{d}_{i} - \sum_{\tilde{d}_{i}\geq A_{n}}(B-\tilde{d}_{i}) - \sum_{\tilde{d}_{i}<0}\tilde{d}_{i} - \sum_{\tilde{d}_{i}>B}(\tilde{d}_{i}-B)\\
 & = \sum_{d_{i}<A_{n}}d_{i} - \sum_{d_{i}\geq A_{n}}(B-d_{i}) - \sum_{n=-m}^{-1}N_{j}A_{j} - \sum_{j=n+2}^{n+p+1}N_{j}(A_{j}-B)\\
 & = \sum_{j=1}^{n}A_{j}N_{j} + B\left(k_{0}+\sum_{j=n+2}^{n+p+1}N_{j}\right).\\
\end{split}\end{equation}
The last step is a consequence of the trace condition \eqref{fulltrace1} for $\{d_i\}_{i\in I}$. This shows that $\{\tilde{d}_{i}\}_{i\in I^0}$ also satisfies \eqref{fulltrace1} with $k_{0} + \sum_{j=n+2}^{n+p+1}N_{j}$ in the place of $k_{0}$.

Using \eqref{fullmaj1} for the sequence $\{d_{i}\}_{i\in I}$ yields
\begin{equation}\label{2infhorn6}\begin{split}
 \sum_{\tilde{d}_{i}\in[0,A_{r})} & \tilde{d}_{i}  = \sum_{\tilde{d}_{i}<A_{r}}\tilde{d}_{i} - \sum_{\tilde{d}_{i}<0}\tilde{d}_{i} = \sum_{d_{i}<A_{r}}d_{i} - \sum_{j=-m}^{-1}N_{j}A_{j}\\
 & \geq \sum_{\genfrac{}{}{0pt}{}{j=-m}{j\neq 0}}^{r}A_{j}N_{j} + A_{r}\left(k_{0} - |\{i\in I\colon A_{r}\leq d_{i}<A_{n}\}| + \sum_{\genfrac{}{}{0pt}{}{j=r+1}{j\neq n+1}}^{n+p+1}N_{j}\right) - \sum_{j=-m}^{-1}N_{j}A_{j}\\
 & = \sum_{j=1}^{r}A_{j}N_{j} + A_{r}\left(k_{0}+\sum_{j=n+2}^{n+p+1}N_{j} - |\{i\in I\colon A_{r}\leq d_{i}<A_{n}\}| + \sum_{j=r+1}^{n}N_{j}\right).
\end{split}\end{equation}
Together \eqref{2infhorn5} and \eqref{2infhorn6} show that $\{\tilde{d}_{i}\}_{i\in I^0}$ satisfies interior majorization by $\{(A_{j},N_{j})\}_{j=0}^{n+1}$.
By Theorem \ref{inthorn} there is a self-adjoint operator $\tilde{E}_{0}$ with $\sigma(\tilde{E}_{0}) \subseteq \{A_{0},\ldots,A_{n+1}\}$, $m_{\tilde{E}_{0}}(A_{j}) = N_{j}$ for each $j=1,\ldots,n$, and diagonal $\{\tilde{d}_{i}\}_{i\in I^0}$.

Define the operator $\tilde{E}=\tilde{E}_{-}\oplus\tilde{E}_{0}\oplus\tilde{E}_{+}$. One of the sets $\{i:A_{-1}<\tilde{d}_{i}\leq 0\}$ or $\{i:0\leq \tilde{d}_{i}<A_{1}\}$ must be infinite since $\{i:A_{-1}\leq d_{i}<A_{1}\}$ is infinite. If $\{i:A_{-1}\leq \tilde{d}_{i}<0\}$ is infinite, then by \eqref{2infhorn3} $m_{\tilde{E}_{-}}(0)=\infty$. If $\{i:0\leq \tilde{d}_{i}<A_{1}\}$ is infinite, then by Theorem \ref{inthorn} we have $m_{\tilde{E}_{0}}(0)=\infty$. In either case $m_{\tilde{E}}(0)=\infty$. By similar considerations we also have $m_{\tilde{E}}(B)=\infty$.

The operator $\tilde{E}$ has diagonal $\{\tilde{d}_i\}_{i\in I}$ and eigenvalue-multiplicity list $\{(A_{j},N_{j})\}_{j=-m}^{n+p+1}$. Lemma \ref{dcup} (ii), which is technically applied to the operator $\tilde E - B \mathbf I$, implies that there is an operator $E'$, unitarily equivalent to $\tilde{E}$ with diagonal $\{d'_{i}\}_{i\in I}$. Another application of Lemma \ref{dcup} (ii) to $E'$ implies that there is an operator $E$, unitarily equivalent to $E'$, and thus to $\tilde E$, with diagonal $\{d_{i}\}_{i\in I}$. This completes the proof of Theorem \ref{2infhorn}.
\end{proof}

Finally, we need to address the case of operators with three or more eigenvalues with infinite multiplicity. Quite surprisingly, this case is much easier than that of two infinite multiplicities. We shall make use of the following result established by the second author, see \cite[Theorem 4.2]{jj}.

\begin{thm}\label{3intsuff} Suppose that $\Lambda\subset[0,B]$ is a countable set with $0,B\in\Lambda$. Suppose that $N_{\lambda}\in\N\cup\{\infty\}$ for each $\lambda\in \Lambda\cap(0,B)$, and set $N_{0}=N_{B}=\infty$. If $\{d_{i}\}_{i\in I}$ is a sequence in $[0,B]$ such that for some (and hence for all) $\alpha\in (0,B)$
\begin{equation}\label{3intsuff1}
C(\alpha)=\infty\qquad\text{or}\qquad D(\alpha)=\infty,
\end{equation}
then there is a positive diagonalizable operator $E$ with diagonal $\{d_{i}\}_{i\in I}$ and eigenvalues $\Lambda$ with prescribed multiplicities
\[
m_{E}(\lambda)=
\begin{cases}
N_{\lambda} & \lambda\in \Lambda,
\\
0 & \lambda \not\in \Lambda.
\end{cases}
\]
\end{thm}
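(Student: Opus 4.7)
My plan is to reduce to Kadison's Theorem \ref{Kadison} by first isolating one interior eigenvalue at a time using finite-dimensional Schur-Horn blocks, and then absorbing everything that is left into a single projection.

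First I would exploit the symmetry $d_i \mapsto B-d_i$, $\lambda \mapsto B-\lambda$ to reduce to the case $C(\alpha)=\infty$ for every $\alpha \in (0,B)$. This reduction is valid because $C$ is nondecreasing and $D$ is nonincreasing in $\alpha$: if $C(\alpha_0)<\infty$ and $D(\alpha_1)<\infty$ for some $\alpha_0,\alpha_1$, then at $\alpha_* = \min(\alpha_0,\alpha_1)$ one has both $C(\alpha_*)<\infty$ and $D(\alpha_*)<\infty$, contradicting the hypothesis \eqref{3intsuff1}. In this reduced regime, $|\{i : d_i < \alpha\}|=\infty$ for every $\alpha>0$, so there is an inexhaustible reservoir of arbitrarily small diagonal entries.

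Next I would enumerate $\Lambda \cap (0,B)$ with multiplicities as a (finite or countably infinite) list $\mu_1,\mu_2,\ldots$, each $\lambda \in \Lambda \cap (0,B)$ appearing $N_\lambda$ times (or infinitely often if $N_\lambda=\infty$). For each $k$ I would greedily choose a finite $I_k \subset I$, disjoint from all previously chosen $I_j$, of tiny $d_i$'s so that $\{d_i\}_{i\in I_k}$ together with a target spectrum $\{0^{a_k},\mu_k,B^{b_k}\}$ satisfies the finite-dimensional Schur-Horn Theorem \ref{horn}. The majorization inequalities are automatic once the $d_i$'s in $I_k$ are chosen below $\mu_k$ (taking $b_k=0$ is permissible since, when there is no large supply, a block with eigenvalues $(\mu_k,0,\ldots,0)$ and enough small diagonal entries summing to $\mu_k$ suffices). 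The only nontrivial requirement is the exact trace condition $\sum_{i\in I_k} d_i = \mu_k + b_k B$. Non-summability lets me match this target up to an arbitrarily small discrepancy $\eps_k>0$, and I would then invoke the ``moving toward 0-1'' Lemma \ref{movelemma} on the next two as-yet-unused indices (one slightly-below-average, one slightly-above-average) to absorb the discrepancy exactly, producing a modified diagonal $\{\tilde d_i\}$ which agrees with $\{d_i\}$ outside a controlled finite set and for which the finite Schur-Horn realization is exact. The finite-dimensional operator built from this modified diagonal is then pulled back to an operator with the original diagonal by Lemma \ref{movelemma}(ii).

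Setting $I_\infty := I \setminus \bigsqcup_k I_k$, I would ensure by a reservation strategy (e.g.\ at stage $k$, mark countably many small-$d_i$ indices as permanently off-limits for block use) that $\{d_i\}_{i\in I_\infty}$ still has $C(\alpha)=\infty$ on $I_\infty$, placing it in the non-summable case of Kadison's Theorem \ref{Kadison}. Hence there is a projection $P$ on $\ell^2(I_\infty)$ with $m_{BP}(0)=m_{BP}(B)=\infty$ and diagonal $\{d_i\}_{i\in I_\infty}$. The operator $E = BP \oplus \bigoplus_k E_k$ then has the prescribed diagonal and eigenvalue multiplicities $m_E(0)=m_E(B)=\infty$ and $m_E(\lambda)=N_\lambda$ for $\lambda \in \Lambda\cap (0,B)$.

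The main obstacle is precisely the exact-trace bookkeeping in the block construction: we must simultaneously (a) enforce the equality $\sum_{i\in I_k} d_i = \mu_k + b_k B$, (b) keep the Schur-Horn majorization tight so the block is realizable, and (c) leave the complement $I_\infty$ with enough small entries to fall into Kadison's non-summable case. The moving lemma \ref{movelemma} is the main tool for (a), providing a rigorous passage from approximate matching (which is cheap by non-summability) to exact matching; assertion (b) is handled by always drawing from $\{i : d_i < \mu_k\}$, which remains an infinite set at every step; and (c) is ensured by the index-reservation scheme described above.
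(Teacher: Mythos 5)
The paper does not prove Theorem~\ref{3intsuff}; it quotes it from \cite[Theorem 4.2]{jj}, so there is no ``paper's own proof'' to compare against. On the merits of your blind attempt, there is a genuine gap at the very first step.

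The proposed reduction to the case $C(\alpha)=\infty$ for every $\alpha\in(0,B)$ is wrong. You argue that $C(\alpha_0)<\infty$ and $D(\alpha_1)<\infty$ would force both $C(\alpha_*)<\infty$ and $D(\alpha_*)<\infty$ at $\alpha_*=\min(\alpha_0,\alpha_1)$, but monotonicity only gives $C(\alpha_*)\le C(\alpha_0)$; since $D$ is nonincreasing and $\alpha_*\le\alpha_1$, one gets $D(\alpha_*)\ge D(\alpha_1)$, which is useless. A concrete counterexample is $d_i\equiv B/2$: the hypothesis \eqref{3intsuff1} holds at every $\alpha$, yet $C(\alpha)<\infty$ for $\alpha\le B/2$ and $D(\alpha)<\infty$ for $\alpha>B/2$, so neither $C\equiv\infty$ nor $D\equiv\infty$. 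More importantly, in this example there is no reservoir of arbitrarily small (or arbitrarily close-to-$B$) diagonal entries, which is exactly what the rest of your construction depends on: every finite subset of the diagonal has sum in $(B/2)\N_0$, so no finite block with eigenvalues $\{0^{a_k},\mu_k,B^{b_k}\}$ can match it exactly unless $\mu_k/B$ is a half-integer, and the moving-lemma ``discrepancy repair'' you invoke has nothing to push toward $0$ or $B$ with.

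The overall strategy is salvageable but requires a preliminary step that your write-up omits: before carving off any blocks, apply Lemma~\ref{movelemma} to pairs of diagonal entries clustered near an interior accumulation point to push mass toward $0$ and toward $B$ simultaneously, thereby manufacturing the small and large reservoirs that the block construction and the final Kadison step require, while remaining in the same unitary orbit by Lemma~\ref{movelemma}~(ii). As written, the argument does not establish the theorem.
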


We are now ready to prove the analogue of Theorem \ref{fullthm} in the non-summable scenario \eqref{3intsuff1}. In particular, Theorem \ref{3infhorn} deals with the case of more than two eigenvalues with infinite multiplicities.

\begin{thm}\label{3infhorn} 
Let $\{(A_{j},N_{j})\}_{j=-m}^{n+p+1}$ be as in Definition \ref{conv}. Let $\{d_{i}\}_{i\in I}$ be a sequence in $[A_{-m},A_{n+p+1}]$ which satisfies the following three conditions:
\begin{enumerate}
\item lower exterior majorization by $\{(A_{j},N_{j})\}_{j=-m}^{0}$, 
\item upper exterior majorization by $\{(A_{j},N_{j})\}_{j=p+1}^{n+p+1}$,
\item for some (and hence for all) $\alpha\in(0,B)$,
\eqref{3intsuff1} holds.
\end{enumerate}
Then, there exists a self-adjoint operator $E$ with eigenvalue-multiplicity list $\{(A_{j},N_{j})\}_{j=-m}^{n+p+1}$ and diagonal $\{d_{i}\}_{i\in I}$.
\end{thm}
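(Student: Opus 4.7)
The plan follows the same template as Theorem~\ref{2infhorn}: apply the decoupling lemma (Lemma~\ref{dcup}) twice to modify $\{d_i\}$ into a sequence $\{\tilde d_i\}$ whose lower and upper exterior traces are exact, then split into three ranges and construct the operator piece by piece. Concretely, set
\[
\eta_0 := \sum_{d_i < 0} d_i - \sum_{j=-m}^{-1} N_j A_j, \qquad \eta_B := \sum_{j=n+2}^{n+p+1} N_j(A_j - B) - \sum_{d_i > B}(d_i - B),
\]
which are non-negative by (i) and (ii) at the extreme indices $r=0$ and $r=n+1$, and fix $\epsilon>0$ smaller than $\min\{-A_{-1},A_1,B-A_n,A_{n+2}-B\}$.

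I would first invoke Lemma~\ref{dcup} with $J = \{i : d_i \in [-\epsilon,B]\}$, parameters $\gamma=\epsilon$, $\delta=B$, and $\eta=\eta_0$, producing $\{d'_i\}$; then a second time on $\{d'_i - B\}$ with $J' = \{i : d'_i \in [0,B+\epsilon]\}$, $\gamma = B$, $\delta = \epsilon$, and $\eta=\eta_B$, producing $\{\tilde d_i\}$. The crucial point is verifying hypothesis \eqref{dcup0} in each case. For the first, the required inequality is $\sum_{0 \le d_i \le B}(B-d_i) = \infty$: using that $\sum_{d_i<0}|d_i|$ and $\sum_{d_i>B}(d_i-B)$ are both finite by (i) and (ii), (iii) at any $\alpha \in (0,B)$ gives either $C(\alpha)=\infty$, hence infinitely many $d_i \in [0,\alpha)$ each contributing at least $B-\alpha$, or $D(\alpha)=\infty$, hence $\sum_{d_i \in [\alpha,B]}(B-d_i) = \infty$ directly. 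A symmetric argument yields $\sum_{0 \le d_i \le B} d_i = \infty$, which handles the second decoupling. Since $J \subset \{d_i \le B\}$ and $J' \subset \{d'_i \ge 0\}$, the two modifications do not interfere with each other's trace identities, and after both we have
\[
\sum_{\tilde d_i < 0} \tilde d_i = \sum_{j=-m}^{-1} N_j A_j, \qquad \sum_{\tilde d_i > B} (\tilde d_i - B) = \sum_{j=n+2}^{n+p+1} N_j(A_j-B).
\]
Because only finitely many entries are moved, and our choice of $\epsilon$ keeps the modifications strictly inside $(A_{-1},A_{n+2})$, the conditions (i), (ii) and (iii) all persist for $\{\tilde d_i\}$.

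Finally, partition $I = I^- \sqcup I^0 \sqcup I^+$ by the sign of $\tilde d_i$ relative to $0$ and $B$. Applying Theorem~\ref{nfrhorn} to $\{\tilde d_i\}_{i \in I^-}$ yields $\tilde E_-$ with spectrum in $\{A_{-m}, \ldots, A_0\}$ and multiplicities $N_{-m}, \ldots, N_{-1}$; applying Theorem~\ref{pfrhorn} to the shifted sequence $\{\tilde d_i - B\}_{i \in I^+}$ yields $\tilde E_+$; and applying Theorem~\ref{3intsuff} to $\{\tilde d_i\}_{i \in I^0} \subset [0,B]$ (whose non-summability is inherited, since we removed only absolutely convergent contributions) yields $\tilde E_0$ with eigenvalues exactly $\{0, A_1, \ldots, A_n, B\}$, prescribed interior multiplicities $N_1,\ldots,N_n$, and $m_{\tilde E_0}(0) = m_{\tilde E_0}(B) = \infty$. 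The direct sum $\tilde E = \tilde E_- \oplus \tilde E_0 \oplus \tilde E_+$ has the desired eigenvalue-multiplicity list and diagonal $\{\tilde d_i\}$; reversing the two decouplings through Lemma~\ref{dcup}(ii) then gives an operator $E$ unitarily equivalent to $\tilde E$ with the original diagonal $\{d_i\}$. The principal obstacle, in contrast to Theorem~\ref{2infhorn}, is the verification of \eqref{dcup0} for the wide sets $J$ and $J'$: one cannot localize near $0$ or $B$ as in the summable case, and must instead harvest the divergence from the opposite end of the interval $[0,B]$.
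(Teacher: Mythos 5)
Your proof is correct and takes essentially the same approach as the paper: compute the slack quantities $\eta_0,\eta_B$, apply the decoupling Lemma~\ref{dcup} twice to make the lower and upper exterior traces exact, split into $I^-$, $I^0$, $I^+$, invoke Theorems~\ref{nfrhorn}, \ref{3intsuff}, \ref{pfrhorn} on the three pieces, and reverse the decouplings via Lemma~\ref{dcup}(ii). Your verification of \eqref{dcup0} from condition (iii), harvesting the divergence from the interior of $[0,B]$ rather than from near the endpoints, matches the spirit of the paper's argument.

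The one place your route genuinely diverges from the paper's is in guaranteeing that $\{\tilde d_i\}_{i\in I^0}$ still satisfies \eqref{3intsuff1}. The paper partitions the interior index set $I_0\cup I_B$ into three disjoint subsets $J_1,J_2,J_3$, each with divergent $\sum_{d_i<B/2}d_i + \sum_{d_i\ge B/2}(B-d_i)$, uses $J_1$ for the first decoupling and $J_2$ for the second, and then observes that $J_3$ is left entirely untouched and lands inside $I^0$, so non-summability is preserved by construction. You instead apply Lemma~\ref{dcup} with the larger sets $J=\{i:d_i\in[-\epsilon,B]\}$ and $J'=\{i:d'_i\in[0,B+\epsilon]\}$, and appeal to the fact that each application of Lemma~\ref{dcup} (via Lemma~\ref{movelemma}) modifies only finitely many diagonal entries, so removing or altering those finitely many terms cannot destroy the divergence. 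Both arguments are correct; yours is slightly more economical in that it sidesteps the three-way partition, while the paper's disjoint-support bookkeeping is perhaps easier to read off at a glance. Your observation that the two decoupling applications do not disturb each other's trace identities, because $J\subset\{d_i\le B\}$ keeps the first pass from touching the upper exterior and $J'\subset\{d'_i\ge 0\}$ keeps the second from touching the lower exterior, is the same point the paper makes when it records that $\tilde d_i = d'_i$ off $[B-\delta,B+\delta]$ and $d'_i=d_i$ off $[-\delta,\delta]$.
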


\begin{proof} It suffices to consider only the case when $m,p\geq 1$ since the case that $m=p=0$ is Theorem \ref{3intsuff} and the case when either $m=0$ or $p=0$ is an easy modification of the argument below.

Let $\delta>0$ such that $2\delta<\min\{-A_{-1},A_{n+2}-B,B\}$. 
Define the sets
\[I_{0} = \{i:0\leq d_{i}<B/2\} \quad\text{and}\quad I_{B} = \{i:B/2\leq d_{i}\leq B\}.\]
Upper and lower exterior majorization imply that
\[\sum_{d_{i}<0}-d_{i},\sum_{d_{i}>B}(d_{i}-B)<\infty.\]
Thus, (iii) implies that
\[\sum_{i\in I_{0}}d_{i} + \sum_{i\in I_{B}}(B-d_{i}) = \infty.\]
We can find a partition $I_{0}\cup I_{B}$ into three sets $J_{1},J_{2}$ and $J_{3}$ such that
\[\sum_{\genfrac{}{}{0pt}{}{i\in J_{j}}{d_{i}<B/2}}d_{i} + \sum_{\genfrac{}{}{0pt}{}{i\in J_{j}}{d_{i}\geq B/2}}(B-d_{i}) = \infty\quad\text{for }j=1,2,3.\]
Note that we also have
\[\sum_{i\in J_{j}}d_{i} = \sum_{i\in J_{j}}(B-d_{i}) = \infty\quad \text{for }j=1,2,3.\]

Set
\[\eta_{0}=\sum_{d_{i}<0}d_{i} - \sum_{j=-m}^{-1}N_{j}A_{j}\quad\text{and}\quad \eta_{B} = \sum_{j=n+2}^{n+p+1}N_{j}(A_{j}-B) - \sum_{d_{i}>B}(d_{i}-B).\]
From \eqref{ulmaj1} and \eqref{llmaj1} we see that $\eta_{0},\eta_{B}\geq 0$.
Applying Lemma \ref{dcup} (i) to the sequence $\{d_{i}\}_{i\in I}$ on the interval $[-\delta,B]$ and $J_1 \subset I$, we obtain a sequence $\{d'_{i}\}_{i\in I}$ such that
\[
\sum_{i\in J_{1},\ d'_{i}<0} d'_{i} = -\eta_{0}.
\]
Applying Lemma \ref{dcup} (i) again to the sequence $\{d'_{i}-B\}_{i\in I}$ on the interval $[-B,\delta]$ and $J_2 \subset I$, we obtain a sequence $\{\tilde{d}_{i}-B\}_{i\in I}$ such that
\[
\sum_{i\in J_2, \ \tilde{d}_{i}>B} (\tilde{d}_{i}-B) = \eta_{B}.
\]
Observe also that the values of $d_i$ and $d'_i$, which lie outside of the interval $[-\delta,B]$, must coincide. The same is true for the values of $d'_i$ and $\tilde d_i$, which lie outside of the interval  $[0,B+\delta]$.

Let $I^-=\{i\in I: \tilde d_i < 0\}$, $I^0=\{i\in I: \tilde d_i \in [0,B]\}$, and $I^+= \{i\in I: \tilde d_i >B \}$.
The above observation implies that $\{\tilde{d}_{i}\}_{i\in I^-}$ satisfies lower exterior majorization by $\{(A_{j},N_{j})\}_{j=-m}^{0}$. Moreover, by the choice of $\eta_0$ we have the trace condition 
\[
\sum_{i\in I^-}\tilde{d}_{i} = \sum_{d_{i}<0}d_{i} + \sum_{i\in J_{1},\ \tilde{d}_{i}<0} \tilde{d}_{i} = \sum_{d_i<0}d_{i} + \sum_{i\in J_{1},\ d'_{i}<0} d'_{i} = \sum_{j=-m}^{-1}N_{j}A_{j}.\]
Theorem \ref{nfrhorn} implies that there is a self-adjoint operator $\tilde{E}_{-}$ with $\sigma(\tilde{E}_{-}) \subseteq \{A_{-m},\ldots,A_{0}\}$, $m_{\tilde{E}_{-}}(A_{j}) = N_{j}$ for each $j=-m,\ldots,-1$, and diagonal $\{\tilde{d}_{i}\}_{i\in I^-}$.
Likewise, $\{\tilde{d}_{i}\}_{i\in I^+}$ satisfies upper exterior majorization by $\{(A_{j},N_{j})\}_{j=n+1}^{n+p+1}$ as well as the trace condition 
\begin{align*}
\sum_{i\in I^+}(\tilde{d}_{i}-B)  
= \sum_{d'_{i}>B}(d'_{i}-B) + \sum_{i\in J_{2},\ \tilde{d}_{i}>B}(\tilde{d}_{i}-B) 
 =  \sum_{d_{i}>B}(d_{i}-B)+\eta_B = \sum_{j=n+2}^{n+p+1}N_{j}(A_{j}-B).
\end{align*}
By Theorem \ref{pfrhorn} there is a self-adjoint operator $\tilde{E}_{+}$ with $\sigma(\tilde{E}_{+}) \subseteq \{A_{n+1},\ldots,A_{n+p+1}\}$, $m_{\tilde{E}_{+}}(A_{j}) = N_{j}$ for $j=n+2,\ldots,n+p+1$, and diagonal $\{\tilde{d}_{i}\}_{i\in I^+}$.

Finally, the sequence $\{\tilde{d}_{i}\}_{i\in I^0}$ satisfies \eqref{3intsuff1} since $J_{3} \subset I^0$. By Theorem \ref{3intsuff} there is a self adjoint operator $\tilde{E}_{0}$ with $\sigma(\tilde{E}_{0}) = \{A_{0},A_{1},\ldots,A_{n+1}\}$, $m_{\tilde{E}_{0}}(A_{j})=N_{j}$ for $j=0,\ldots,n+1$ and diagonal $\{\tilde{d}_{i}\}_{i\in I^0}$. Consequently, $\tilde{E} = \tilde{E}_{-}\oplus\tilde{E}_{0}\oplus\tilde{E}_{+}$ has the desired eigenvalues and multiplicities and diagonal $\{\tilde{d}_{i}\}_{i\in I}$. By two applications of Lemma \ref{dcup} (ii) there is an operator $E$, unitarily equivalent to $\tilde{E}$ with diagonal $\{d_{i}\}_{i\in I}$.
\end{proof}

We can now complete the proof of Theorem \ref{fullthm}.

\begin{proof}[Proof of sufficiency of Theorem \ref{fullthm}]
Assume that (i)--(iii) in Theorem \ref{fullthm} hold. If $C(B/2)<\infty$ and $D(B/2)<\infty$, then Theorem \ref{2infhorn} applies. Otherwise, Theorem \ref{3infhorn} applies. In either case, there exists a self-adjoint operator $E$ with eigenvalue-multiplicity list $\{(A_{j},N_{j})\}_{j=-m}^{n+p+1}$ and diagonal $\{d_{i}\}_{i\in I}$.
\end{proof}

\section{Applications of the main result}

The main results of the paper, Theorems \ref{fullthm} and \ref{frhorn}, characterize diagonals of self-adjoint with prescribed finite spectrum and multiplicities. However, these results do not resemble in an obvious way their finite dimensional progenitor, the Schur-Horn Theorem \ref{horn}. In this section we shall consider a converse problem of characterizing spectra of operators with a fixed diagonal. In particular, we shall establish Theorem \ref{La}  which resembles quite closely the Schur-Horn Theorem \ref{horn}. 

For the sake of simplicity we shall concentrate on operators $E$ with finite spectrum such that their smallest and largest eigenvalue have infinite multiplicities. Moreover, by a normalization we can assume that $\{0,1\} \subset \sigma(E) \subset [0,1]$. This is not a true limitation since the part of an operator lying outside of eigenvalues with infinite multiplicities is finite rank, and hence susceptible to the usual majorization techniques. Hence, we avoid dealing with the  exterior majorization condition and instead we concentrate on truly infinite dimensional interior majorization condition. 

Given a self-adjoint operator $E$ with $\sigma(E) \subset [0,1]$, let 
\[
\mathcal H_E = (\ker E \oplus \ker (\mathbf I -E))^\perp.
\]
For a fixed sequence $\{d_{i}\}_{i\in I}$ in $[0,1]$, where $I$ is countable, we consider the set
\begin{multline}\label{LN}
{\Lambda}_{N}(\{d_{i}\})=\big\{(\lambda_{1},\ldots,\lambda_{N})\in(0,1)^{N}:\exists\,E\geq 0\text{  with diagonal $\{d_{i}\}$ such that}
\\ \dim \mathcal H_E=N
\text{ and } E|_{\mathcal H_E} \text{ has eigenvalues } \{\la_i\}_{i=1}^N \text{ listed with multiplicities} \big\}.
\end{multline}
Similarly defined sets, though with ignored multiplicities,
\begin{multline*}
\mathcal{A}_{N}(\{d_{i}\})= \big\{(A_{1},\ldots,A_{N})\in(0,1)^{N}:\forall_{j \ne k} \ A_j \ne A_k,
\ 
\exists\text{ $E\geq 0$ with diagonal $\{d_{i}\}$ and }
\\
 \sigma(E)=\{0,A_{1},\ldots,A_{N},1\}\big\}.
\end{multline*}
were studied in \cite{jj} and \cite{mbjj3}.  
If there exists $\alpha\in(0,1)$ such that
\begin{equation}\label{nF}
\sum_{d_{i}<\alpha}d_{i} + \sum_{d_{i}\geq\alpha}(1-d_{i}) = \infty,
\end{equation}
then by Theorem \ref{3intsuff}, $\Lambda_{N}(\{d_{i}\}) = (0,1)^{N}$. Thus, we will consider only sequences in the set 
\begin{equation}
\label{calf}
\mathcal{F}:=\left\{\{d_{i}\}:\exists\,\alpha\in(0,1)\text{ such that }\sum_{d_{i}<\alpha}d_{i} + \sum_{d_{i}\geq\alpha}(1-d_{i})<\infty\right\}.
\end{equation}
For any sequence $\mathbf d=\{d_i\}_{i\in I}\in\mathcal F$ we define a function $f_{\mathbf d}: (0,1) \to (0,\infty)$ by
\begin{equation}\label{f}
f_{\mathbf d}(\alpha) = (1-\alpha) C(\alpha)+ \alpha D(\alpha),
\qquad\text{where }C(\alpha)=\sum_{d_{i}<\alpha}d_{i}, \ D(\alpha)= \sum_{d_{i}\geq\alpha}(1-d_{i}).
\end{equation}
As a consequence of Theorem \ref{fullthm} we have the following lemma.

\begin{lem}\label{eqv}
Let $\mathbf d=\{d_{i}\}_{i\in I} \in \mathcal F$ be a sequence in $[0,1]$. 
Let $\boldsymbol \la = \{\lambda_{i}\}_{i=1}^N$ be a sequence in $(0,1)$. Then $\boldsymbol \la \in \Lambda_N(\{d_i\})$ $\iff$ 
\begin{align}\label{f1}
C(1/2)-D(1/2) & \equiv \sum_{i=1}^N \lambda_i\mod 1 
\\
\label{f2}
\text{and}\qquad
f_{\mathbf d}(\la_i) & \ge f_{\boldsymbol \lambda} (\lambda_i)\qquad\text{for }i=1,\ldots,N.
\end{align}
\end{lem}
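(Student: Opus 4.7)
The plan is to reduce Lemma~\ref{eqv} to the interior majorization branch of Theorem~\ref{fullthm} via a ``padding'' construction. Enumerate the distinct values of $\{\lambda_i\}_{i=1}^N$ as $A_1<\cdots<A_n$ with multiplicities $N_1,\ldots,N_n$; set $A_0=0$, $A_{n+1}=1$, $N_0=N_{n+1}=\infty$. Enlarge $I$ to $\tilde I = I\sqcup I_0\sqcup I_1$ with $|I_0|=|I_1|=\infty$, and let $\tilde{\mathbf d}$ equal $\mathbf d$ on $I$, $0$ on $I_0$, and $1$ on $I_1$. Since appending $0$'s and $1$'s leaves $C(\alpha)$, $D(\alpha)$, and $f_{\mathbf d}$ unchanged for every $\alpha\in(0,1)$, conditions \eqref{f1} and \eqref{f2} hold for $\tilde{\mathbf d}$ if and only if they hold for $\mathbf d$.

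The key observation is that $\boldsymbol\lambda\in\Lambda_N(\{d_i\})$ iff there exists a positive operator $\tilde E$ with diagonal $\tilde{\mathbf d}$ and eigenvalue-multiplicity list $\{(A_j,N_j)\}_{j=0}^{n+1}$ in the sense of Definition~\ref{conv}. Given $E$ realizing $\boldsymbol\lambda\in\Lambda_N$, the operator $\tilde E:=0\oplus E\oplus\mathbf I$ (with $0$ on $\ell^2(I_0)$ and $\mathbf I$ on $\ell^2(I_1)$) has the required form. Conversely, given such a $\tilde E$, positivity forces $\tilde E e_i=0$ for $i\in I_0$ and $\tilde E e_i=e_i$ for $i\in I_1$, so by self-adjointness the subspace $\overline{\lspan}\{e_i:i\in I\}$ is invariant; the restriction $E$ of $\tilde E$ to it has diagonal $\mathbf d$, interior multiplicities $m_E(A_j)=N_j$, and hence $\dim\mathcal H_E=N$ with $E|_{\mathcal H_E}$ realizing $\boldsymbol\lambda$.

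Now apply Theorem~\ref{fullthm} to $\tilde{\mathbf d}$ with $m=p=0$, $A_0=0$, $B=A_{n+1}=1$. Since $\mathbf d\in\mathcal F$ gives $C(\alpha),D(\alpha)<\infty$ for every $\alpha\in(0,1)$, the non-summability subcase of~(ii) is ruled out; conditions (i) and (iii) are vacuous; and \eqref{fullinf} is automatic from the padding. With $B=1$ and $\sum_{j=1}^n A_j N_j=\sum_{i=1}^N\lambda_i$, the trace condition \eqref{fulltrace} reads $C(1/2)-D(1/2)=\sum_i\lambda_i+k$ for some $k\in\Z$, which is exactly \eqref{f1}.

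It remains to identify \eqref{fullmaj} with \eqref{f2}. By the definition \eqref{f}, the left-hand side of \eqref{fullmaj} at $\alpha=A_r$ is $f_{\mathbf d}(A_r)$. Using
\[
C_{\boldsymbol\lambda}(A_r)=\sum_{j=1}^{r-1}A_j N_j,\qquad D_{\boldsymbol\lambda}(A_r)=\sum_{j=r}^{n}(1-A_j)N_j,
\]
a direct algebraic computation rewrites the right-hand side of \eqref{fullmaj} as $(1-A_r)C_{\boldsymbol\lambda}(A_r)+A_r D_{\boldsymbol\lambda}(A_r)=f_{\boldsymbol\lambda}(A_r)$. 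Since $\{\lambda_i\}_{i=1}^N$ and $\{A_r\}_{r=1}^n$ coincide as subsets of $(0,1)$, \eqref{fullmaj} holding for all $r=1,\ldots,n$ is exactly \eqref{f2} holding for all $i=1,\ldots,N$. The only delicate step is the padding/restriction equivalence in the second paragraph; the remainder is a routine algebraic translation.
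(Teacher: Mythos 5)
Your proposal is correct and follows essentially the same path as the paper's proof: pad the diagonal with infinitely many $0$'s and $1$'s (the paper invokes Lemma \ref{decomp} for this reduction rather than arguing the invariance directly from Cauchy--Schwarz as you do), specialize Theorem \ref{fullthm} to $m=p=0$, $B=1$, and then match \eqref{fulltrace} with \eqref{f1} and \eqref{fullmaj} with \eqref{f2} via the algebraic identity $f_{\boldsymbol\lambda}(A_r)=(1-A_r)\sum_{j\le r-1}A_jN_j+A_r\sum_{j\ge r}(1-A_j)N_j$, whose equality with the right-hand side of \eqref{fullmaj} you verify correctly. The only difference is that you spell out the padding/restriction equivalence in more detail than the paper, which compresses it into a citation to Lemma \ref{decomp}.
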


\begin{proof} By Lemma \ref{decomp} the set $\Lambda_N(\mathbf d)$ does not change when we add or remove $0$'s or $1$'s to the sequence $\mathbf d$. Thus, without loss of generality
we can assume that there are infinitely many $d_i<1/2$ and infinitely many $d_i \ge 1/2$. Thus, \eqref{fullinf} holds. Clearly, \eqref{f1} is equivalent with \eqref{fulltrace}, where $(A_j,N_j)_{j=1}^n$ is an eigenvalue-multiplicity list corresponding to eigenvalues $\{\la_i\}_{i=1}^N$ and $m=p=0$. Likewise, \eqref{f2} is equivalent to \eqref{fullmaj}. Applying Theorem \ref{fullthm} yields the lemma.
\end{proof}

The main result of this section, Theorem \ref{La}, shows that $\Lambda_N(\{d_i\})$ is the union of $N-1$ or $N$ upper subsets of constant trace each having a unique minimal element with respect to the majorization order $\prec$, see \cite{moa}. We say that $\{\la_i\}_{i=1}^N=\boldsymbol \la \prec \boldsymbol \mu = \{\mu_i\}_{i=1}^N$ if
\[
\sum_{i=1}^N \lambda_{[i]} = \sum_{i=1}^N \mu_{[i]}
\qquad\text{and}\qquad
\sum_{i=1}^n \lambda_{[i]} \le \sum_{i=1}^n \mu_{[i]}
\qquad\text{for all }1\le n \le N.
\]
In the above $\{\la_{[i]}\}_{i=1}^N$ and $\{\mu_{[i]}\}_{i=1}^N$ denote decreasing rearrangements of $\boldsymbol \la$ and $\boldsymbol \mu$, resp. The relation $\prec$ is a partial order once we identify sequences with the same decreasing rearrangements.

\begin{thm}\label{La}
Let $\{d_{i}\}_{i\in I} \in \mathcal F$ be a sequence in $[0,1]$ such that
\begin{equation}\label{La1}
|\{i \in I: 0<d_i<1/2\}| = |\{i\in I: 1/2 \le d_i < 1\}| = \infty.
\end{equation}
The set $\Lambda_N(\{d_i\})$, where $N\in \N$, has exactly $N$ minimal elements, or $N-1$ minimal elements if $\{d_i\}_{i\in I}$ is a diagonal of a projection, with respect to the majorization order $\prec$. That is, there exist $0 \le \eta <1$ and $\boldsymbol \mu^k=(\mu^k_1, \ldots \mu^k_N) \in (0,1)^N$, $k=0,\ldots,N-1$, such that
\begin{equation}\label{La2}
\Lambda_N(\{d_i\})= \bigcup_{k=0}^{N-1} \{ \boldsymbol \la \in (0,1)^N: \boldsymbol \mu^k \prec \boldsymbol \la \}, \qquad
\sum_{i=1}^N \mu^k_i= k+ \eta.
\end{equation}
In the special case when $\{d_i\}_{i\in I}$ is a diagonal of a projection we have $\eta=0$ and one fewer $\boldsymbol \mu^k$, $k=1,\ldots, N-1$.
\end{thm}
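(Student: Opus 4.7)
The plan is to combine Lemma~\ref{eqv} with the natural correspondence between the function $f_{\boldsymbol{\lambda}}$ and the majorization order. By Lemma~\ref{eqv}, $\boldsymbol{\lambda}\in\Lambda_N(\{d_i\})$ iff $\sum_i\lambda_i\equiv\tau\pmod 1$, where $\tau:=C(1/2)-D(1/2)$, and $f_{\boldsymbol{\lambda}}\le f_{\mathbf{d}}$ pointwise on $(0,1)$. Writing $\tau=k_0+\eta$ with $k_0\in\Z$ and $\eta\in[0,1)$, the restriction $\lambda_i\in(0,1)$ forces $\sum\lambda_i\in(0,N)$, so the admissible traces are $\{k+\eta:k=0,1,\ldots,N-1\}$ when $\eta\in(0,1)$, and $\{1,\ldots,N-1\}$ when $\eta=0$. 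Kadison's Theorem~\ref{Kadison} applied at $\alpha=1/2$ identifies the case $\eta=0$ with $\mathbf{d}$ being a projection diagonal. This gives a decomposition $\Lambda_N=\bigsqcup_k\Lambda_N^k$ into slices of constant trace $k+\eta$.

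Next I would establish the identity $f_{\boldsymbol{\lambda}}(\alpha)=(N-T)\alpha-\sum_{i=1}^N(\alpha-\lambda_i)^+$ for $T=\sum_i\lambda_i$, from which $f_{\boldsymbol{\lambda}}$ is visibly an affine function of $\alpha$ minus a Schur-concave function of $\boldsymbol{\lambda}$. Combined with the Hardy--Littlewood--P\'olya characterization of $\prec$ by the convex test functions $x\mapsto(c-x)^+$, this yields the clean equivalence: for sequences of common trace, $\boldsymbol{\mu}\prec\boldsymbol{\lambda}\Longleftrightarrow f_{\boldsymbol{\mu}}\ge f_{\boldsymbol{\lambda}}$ pointwise on $(0,1)$. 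Consequently each slice $\Lambda_N^k$ is an upper set in the partial order $\prec$: if $\boldsymbol{\mu}\in\Lambda_N^k$ and $\boldsymbol{\mu}\prec\boldsymbol{\lambda}$ with $\boldsymbol{\lambda}\in(0,1)^N$, then $f_{\boldsymbol{\lambda}}\le f_{\boldsymbol{\mu}}\le f_{\mathbf{d}}$, so $\boldsymbol{\lambda}\in\Lambda_N^k$.

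The heart of the argument is to produce, for each admissible $k$, a unique $\prec$-minimum $\boldsymbol{\mu}^k\in\Lambda_N^k$. By the equivalence this is the unique member of $\Lambda_N^k$ whose $f_{\boldsymbol{\mu}^k}$ pointwise dominates every $f_{\boldsymbol{\lambda}}$ with $\boldsymbol{\lambda}\in\Lambda_N^k$. I would construct $\boldsymbol{\mu}^k$ as the one whose $f_{\boldsymbol{\mu}^k}$ is the largest piecewise linear concave function below $f_{\mathbf{d}}$ with the rigid slope profile forced by $T=k+\eta$: initial slope $N-T$ at $\alpha=0+$, final slope $-T$ at $\alpha=1-$, and a total slope drop of $N$ effected by $N$ unit drops (possibly coinciding into multi-drops). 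Concretely, one exhibits $\boldsymbol{\mu}^k$ iteratively: set $\mu_0=0$; at step $j=1,\ldots,N$ extend with slope $N-T-(j-1)$ along the highest line that stays below $f_{\mathbf{d}}$, and place $\mu_j$ at the terminal point of that piece (allowing several break points to coincide when the slope must drop by more than one unit at once to respect the bound). The trace congruence encoded in (a) is precisely what forces $f_{\boldsymbol{\mu}^k}(1)=0$ at the end of the construction and keeps $\boldsymbol{\mu}^k\in(0,1)^N$.

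The main obstacle is verifying extremality, namely that $f_{\boldsymbol{\mu}^k}\ge f_{\boldsymbol{\lambda}}$ for every $\boldsymbol{\lambda}\in\Lambda_N^k$. This I would prove by induction on pieces: both $f_{\boldsymbol{\mu}^k}$ and any competitor $f_{\boldsymbol{\lambda}}$ begin at $(0,0)$ with initial slope $N-T$, and the greedy choice of $\mu_1^k$ as the latest turning point still below $f_{\mathbf{d}}$ forces $f_{\boldsymbol{\mu}^k}\ge f_{\boldsymbol{\lambda}}$ on the first piece; after any slope drop of $f_{\boldsymbol{\lambda}}$, the remaining ``slope budget'' available to $f_{\boldsymbol{\lambda}}$ to catch up is strictly less, and concavity together with the bound $f_{\boldsymbol{\lambda}}\le f_{\mathbf{d}}$ propagates the inequality to all later pieces. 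Nonemptiness of $\Lambda_N^k$ is delivered by the same construction; alternatively it follows from Theorem~\ref{fullthm} applied to the concentrated candidate with roughly $k$ coordinates near $1$, one coordinate near $\eta$ (or, when $\eta=0$, two coordinates near $1/2$ replacing that one), and the rest near $0$. Uniqueness of $\boldsymbol{\mu}^k$ is automatic: two $\prec$-minima $\boldsymbol{\mu},\boldsymbol{\mu}'$ in $\Lambda_N^k$ would satisfy $\boldsymbol{\mu}\prec\boldsymbol{\mu}'\prec\boldsymbol{\mu}$, forcing identical sorted rearrangements and hence identical multisets.
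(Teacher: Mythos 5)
Your overall route is genuinely different from the paper's and the idea is reasonable: translate via Lemma~\ref{eqv} and the identity $f_{\boldsymbol\lambda}(\alpha)=(N-T)\alpha-\sum_{i}(\alpha-\lambda_i)^{+}$ into a pointwise comparison of piecewise linear concave functions, then build the minimal element greedily. The translation from majorization (with fixed trace) to pointwise inequality of $f$-functions via the convex test functions $(\alpha-x)^{+}$ is correct, and the slicing of $\Lambda_N$ by trace is right. However, the extremality argument for the greedy $\boldsymbol\mu^k$ is the heart of the theorem, and that is where you have a real gap. First, ``turn at the latest point still below $f_{\mathbf d}$'' is not quite the right rule: the curve must also terminate at $(1,0)$ using exactly $N$ unit slope drops, and for early pieces this terminal constraint may force a turn \emph{before} the running line ever touches $f_{\mathbf d}$; one also has to handle multi-drops correctly. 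Second, your one-sentence claim that ``the remaining slope budget\dots is strictly less, and concavity\dots propagates the inequality'' does not establish the invariant you actually need, namely that at every $\alpha$ the greedy curve has both value and right derivative at least those of every competitor, and that this pair of inequalities survives each turning point of either curve. That verification is precisely the content of the paper's Lemma~\ref{fm}: the parameters $a$, $b$, $N_a$, $N_b$ and the three-case analysis encode exactly where the greedy curve begins and ends following $f_{\mathbf d}$, and the lemma's proof carries out the domination check explicitly. The paper also needs the approximation Lemma~\ref{app} together with the stability clause of Lemma~\ref{fm} to pass from finite approximants back to $\mathbf d$; your direct approach avoids that reduction, but then the greedy construction and its optimality must be proven for an $f_{\mathbf d}$ with possibly infinitely many knots. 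Finally, you do not check that $\boldsymbol\mu^k\in(0,1)^N$ rather than merely $[0,1]^N$; this is where the hypothesis~\eqref{La1} enters (it forces $D(0^+)=C(1^-)=\infty$, so $f_{\mathbf d}$ has infinite one-sided slopes at the endpoints, which keeps the greedy knots strictly interior). As it stands, your proposal is a plausible blueprint for an alternative proof, but the crucial optimality and well-definedness steps are asserted rather than proved.
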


The proof of Theorem \ref{La} relies on two lemmas. Lemma \ref{app} enables us to approximate general sequences in $\mathcal F$ by those with finitely many terms in $(0,1)$. Lemma \ref{fm} shows the existence of unique minimal elements of certain upper sets with respect to $\prec$, which are stable under perturbations.

\begin{lem}\label{app}
Let $\{d_{i}\}_{i\in I} \in \mathcal F$ be a sequence in $[0,1]$ and let $\ve>0$. Then, there exists a sequence $\{\tilde d_{i}\}_{i\in I}$ in $[0,1]$ such that:
\begin{enumerate}
\item $\tilde d_i=0$ or $\tilde d_i=1$ for all but finitely many $i\in I$,
\item $I_0:=\{i\in I: d_i<\ve\}= \{i \in I: \tilde d_i<\ve\}$,
\item $I_1:=\{i\in I: d_i>1-\ve\}=\{ i\in I: \tilde d_i >1-\ve\}$,
\item $d_i=\tilde d_i$ for all $i\in I\setminus (I_0 \cup I_1)$,
\item $\sum_{i\in I_0 }d_i = \sum_{i\in I_0} \tilde d_i$ and $\sum_{i\in I_1 }(1-d_i) = \sum_{i\in I_1} (1-\tilde d_i)$. 
\end{enumerate}
Moreover, for any $N\in\N$ we have
\begin{equation}\label{app2}
\Lambda_N(\{d_i\}) \cap [\ve, 1-\ve]^N = 
\Lambda_N(\{\tilde d_i\}) \cap [\ve, 1-\ve]^N.
\end{equation}
\end{lem}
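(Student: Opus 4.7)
The plan is to build $\{\tilde d_i\}$ by redistributing the mass within $I_0$ and the co-mass within $I_1$, leaving all other coordinates fixed. Concretely, set $\tilde d_i = d_i$ for $i \in I \setminus (I_0 \cup I_1)$. Put $S_0 = \sum_{i\in I_0} d_i$, which is finite because $\mathbf{d} \in \mathcal F$. If $I_0$ is finite, keep $\tilde d_i = d_i$ for $i \in I_0$; if $I_0$ is infinite, pick any finite $F_0 \subset I_0$ with $|F_0| > S_0/\varepsilon$, set $\tilde d_i = S_0/|F_0| < \varepsilon$ for $i \in F_0$, and $\tilde d_i = 0$ for $i \in I_0 \setminus F_0$. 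Handle $I_1$ by the dual construction using the finite $S_1 = \sum_{i \in I_1}(1-d_i)$. Properties (ii)--(v) then hold by construction, and (i) follows because $\{i : d_i \in [\varepsilon, 1-\varepsilon]\}$ is finite (a direct consequence of $C(\varepsilon) + D(\varepsilon) < \infty$), so only finitely many $\tilde d_i$ lie in $(0,1)$.

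For the second assertion, if $\varepsilon > 1/2$ both sides of \eqref{app2} are empty. Otherwise $1/2 \in [\varepsilon,1-\varepsilon]$, and I would establish
\begin{equation*}
C_{\mathbf d}(\alpha) = C_{\tilde{\mathbf d}}(\alpha), \qquad D_{\mathbf d}(\alpha) = D_{\tilde{\mathbf d}}(\alpha) \qquad \text{for every } \alpha \in [\varepsilon,1-\varepsilon].
\end{equation*}
The key point is that for such $\alpha$, every $i \in I_0$ satisfies $d_i, \tilde d_i < \varepsilon \le \alpha$ while every $i \in I_1$ satisfies $d_i, \tilde d_i > 1-\varepsilon \ge \alpha$; hence both $C_{\mathbf d}(\alpha)$ and $C_{\tilde{\mathbf d}}(\alpha)$ collapse to
\begin{equation*}
\sum_{i \in I_0} d_i \; + \sum_{i \in I \setminus (I_0 \cup I_1),\, d_i < \alpha} d_i,
\end{equation*}
the equality on $I_0$ being precisely property (v). A dual computation handles $D$, so $f_{\mathbf d}(\alpha) = f_{\tilde{\mathbf d}}(\alpha)$ on $[\varepsilon,1-\varepsilon]$.

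Finally, \eqref{app2} follows from Lemma \ref{eqv}: for any $\boldsymbol\lambda \in (0,1)^N$ with all $\lambda_i \in [\varepsilon,1-\varepsilon]$, both the trace condition \eqref{f1} and the $N$ majorization inequalities \eqref{f2} depend only on $C - D$ evaluated at $1/2$ and on $f$ evaluated at $\lambda_1,\dots,\lambda_N$, all of which are points in $[\varepsilon,1-\varepsilon]$. Since these values agree for $\mathbf d$ and $\tilde{\mathbf d}$, the characterizations of membership in $\Lambda_N(\mathbf d)$ and $\Lambda_N(\tilde{\mathbf d})$ coincide on this cube. The only mild obstacles are the edge cases (finite $I_0$ or $I_1$, trivial sums, or $\varepsilon \ge 1/2$), each easily dispatched; the combinatorics of the construction itself is routine.
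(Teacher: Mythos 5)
Your proof is correct and follows essentially the same strategy as the paper's: build $\{\tilde d_i\}$ by redistributing the total mass $\sum_{i\in I_0}d_i$ inside $I_0$ (dually, the co-mass inside $I_1$) so that only finitely many $\tilde d_i$ land in $(0,1)$, then observe that $C(\alpha)$, $D(\alpha)$, and hence $f$ are unchanged on $[\ve,1-\ve]$ and invoke Lemma \ref{eqv}. The one cosmetic difference is in the redistribution: the paper fixes an anchor index $i_0$ with $d_{i_0}\in(0,\ve)$, chooses a smaller threshold $\ve'$ so the tail mass $\sum_{i\in I_0'} d_i$ is small, and lumps that tail onto $i_0$ while leaving the finitely many $d_i\in[\ve',\ve)$ untouched; you instead spread $S_0$ uniformly over a finite $F_0\subset I_0$ with $|F_0|>S_0/\ve$ and set the remaining $\tilde d_i=0$. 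Both constructions make (ii)--(v) hold by inspection; your version avoids introducing $\ve'$ and $\delta$, while the paper's has the mild advantage of producing $\tilde d_i=d_i$ on $I_0\setminus(I_0'\cup\{i_0\})$. One small thing worth making explicit in your write-up is that finiteness of $S_0$ (and of $\sum_{I_1}(1-d_i)$) for an arbitrary $\ve$ requires a one-line argument from $\mathbf d\in\mathcal F$ (the witness $\alpha$ in \eqref{calf} need not equal $\ve$), and that the construction as stated presumes $I_0\cap I_1=\varnothing$, i.e. $\ve\le 1/2$ -- harmless, since for $\ve>1/2$ the set $[\ve,1-\ve]^N$ is empty, but you should say so when you dispatch the edge cases.
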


\begin{proof}
Choose $i_0,i_1\in I$ such that $d_{i_0} \in (0,\ve)$ and $d_{i_1} \in (1-\ve,1)$. Let $\delta=\min(\ve-d_{i_0}, d_{i_1}-(1-\ve))$. Let $0<\ve'<\ve$ be such that 
\[
\sum_{i\in I_0'} d_i + \sum_{i\in I_1'} (1-d_i)< \delta \qquad\text{where }I'_0=\{i\in I: d_i<\ve'\},
I'_1=\{i\in I: d_i>1-\ve'\}.
\]
Define the sequence $\{\tilde d_i\}_{i\in I}$ by
\[
\tilde d_i = \begin{cases} 0 & i\in I'_0,
\\
d_{i_0}+ \sum_{i\in I_0'} d_i & i=i_0,
\\
d_{i_1}-\sum_{i\in I_1'} (1-d_i) & i=i_1,
\\
1 & i\in I'_1,
\\
d_i & \text{otherwise.}
\end{cases}
\]
Observe that $\tilde d_{i_0} \in (0,\ve)$ and $\tilde d_{i_1} \in (1-\ve,1)$. Then, it is straightforward to check that (i)--(v) hold. Consequently, for any $\alpha \in [\ve,1-\ve]$, the quantities $C(\alpha)$ and $D(\alpha)$ stay the same for both sequences $\mathbf d=\{d_i\}_{i\in I}$ and $\tilde {\mathbf d}=\{\tilde d_i\}_{i\in I}$. Thus, functions $f_{\mathbf d}$ and $f_{\tilde {\mathbf d}}$ defined by \eqref{f} agree on $[\ve, 1-\ve]$. Applying Lemma \ref{eqv} yields \eqref{app2}.
\end{proof}

\begin{lem}\label{fm}
Let $\{d_i\}_{i=1}^M $ be a nonincreasing sequence in $[0,1]$, and let $K\in \N_0$ and $0\le \eta<1$ be such that 
\begin{equation}\label{fm1}
\sum_{i=1}^M d_i = K +\eta.
\end{equation}
Suppose $N\in \N$ and $k\in \N_0$ are such that $N< M$, $k \le K$, $k+\eta>0$, and $K-k \le M-N$.
Then, 
\begin{multline}\label{fm2}
\{ \boldsymbol \la \in (0,1)^N: (d_1,\ldots,d_M) \prec (\underbrace{1,\ldots,1}_{K-k},\lambda_1,\ldots,\lambda_N,\underbrace{0,\ldots \ldots,0}_{M-N-(K-k)}) \}
\\
=\{ \boldsymbol \la  \in (0,1)^N: \boldsymbol \mu \prec \boldsymbol \lambda\}\quad\text{for some }\boldsymbol\mu\in(0,1)^{N}\text{ with } \sum_{i=1}^{N}\mu_{i} = k+\eta.
\end{multline}
Moreover, suppose there exist $\ve>0$ and $m_0,m_1\in \N$ such that the following hold:
\begin{equation}\label{fm2a}
d_i \in (\ve, 1-\ve) \iff m_0 \le i \le m_1,
\end{equation}
\begin{equation}\label{fm2b}
|\{i : d_i \in ((k+\eta)/N,1-\ve) \}| \ge N, 
\end{equation}
\begin{equation}\label{fm2c}
|\{i : d_i \in (\ve,(k+\eta)/N) \}| \ge N.
\end{equation}
If $\{d'_i\}_{i=1}^{M'}$, $M' \ge M$, is a nonincreasing sequence in $[0,1]$ such that
\begin{equation}\label{fm3b}
\exists \, n\in \N_0 \quad d'_i \in (\ve, 1-\ve) \iff n+m_0\le i \le n+m_1,
\end{equation}
\begin{equation}\label{fm3a}
d'_i=d_{i-n} \quad\text{for }n+m_0\le i \le n+m_1,
\end{equation}
\begin{equation}\label{fm3c}
\sum_{d_{i}\leq \ve }d_{i} = \sum_{d_{i}'\leq \ve}d_{i}',
\end{equation}
\begin{equation}\label{fm3d}
\sum_{d_{i}\geq 1-\ve}(1-d_{i}) = \sum_{d_{i}'\geq 1-\ve}(1-d_{i}'),
\end{equation}
then \eqref{fm2} holds  with $\{d_i\}_{i=1}^{M}$ replaced by $\{d'_i\}_{i=1}^{M'}$ and the same $\boldsymbol \mu \in (\ve,1-\ve)^N$.
\end{lem}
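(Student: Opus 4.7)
The plan is to reformulate $\prec$ in terms of partial sums, read off a candidate $\boldsymbol{\mu}$ directly from the $d_{i}$, and then show that this explicit formula survives the perturbation to $\{d_{i}'\}$.

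First, assume $\boldsymbol{\lambda}$ is nonincreasing. The extended sequence $\boldsymbol{\nu}:=(1^{K-k},\lambda_{1},\ldots,\lambda_{N},0^{M-N-(K-k)})$ is then automatically nonincreasing, and since $\{d_{i}\}$ is too, the majorization $(d_{i})\prec\boldsymbol{\nu}$ is equivalent to the trace identity $\sum\lambda_{i}=k+\eta$ together with $\sum_{i=1}^{n}d_{i}\le\sum_{i=1}^{n}\nu_{i}$ for $n=1,\ldots,M$. These latter inequalities are trivial for $n\le K-k$ and for $n\ge K-k+N$, leaving exactly
\[
\sum_{i=1}^{j}\lambda_{i}\ge s_{j},\qquad s_{j}:=\sum_{i=1}^{K-k+j}d_{i}-(K-k),\qquad j=1,\ldots,N,
\]
with equality at $j=N$. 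Setting $\mu_{j}:=d_{K-k+j}$ produces a nonincreasing sequence with partial sums $s_{j}$, so the displayed conditions are exactly $\boldsymbol{\mu}\prec\boldsymbol{\lambda}$; when $\mu_{1}<1$ and $\mu_{N}>0$ this directly yields $\boldsymbol{\mu}\in(0,1)^{N}$ and \eqref{fm2}. The borderline subcases $\mu_{1}=1$ (in which the left side of \eqref{fm2} is empty) or $\mu_{N}=0$ (in which the trailing inequalities are redundant for $\boldsymbol{\lambda}$ in the open cube) are handled by a small perturbation of $\boldsymbol{\mu}$ within $(0,1)^{N}$ that preserves both the total sum and the feasible set on the $\boldsymbol{\lambda}$ side.

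For the second part, hypotheses \eqref{fm3c} and \eqref{fm3d} preserve the sums of the small and large blocks, so a direct comparison gives
\[
\sum d_{i}'-\sum d_{i}=|\{d_{i}'\ge 1-\ve\}|-|\{d_{i}\ge 1-\ve\}|=n,
\]
whence $K'=K+n$ and $\eta'=\eta$. Applying the first part to $\{d_{i}'\}$ produces $\mu_{j}'=d_{K+n-k+j}'$; by \eqref{fm3b} and \eqref{fm3a}, whenever the index $K-k+j$ lies in $[m_{0},m_{1}]$ one has $d_{K+n-k+j}'=d_{K-k+j}=\mu_{j}$, and the two candidate vectors coincide.

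The main obstacle is establishing this range containment $[K-k+1,K-k+N]\subset[m_{0},m_{1}]$, which is equivalent to the strengthened conclusion $\boldsymbol{\mu}\in(\ve,1-\ve)^{N}$ appearing in the statement. Since $\mu_{1}\ge(k+\eta)/N\ge\mu_{N}$ while $\sum\mu_{i}=k+\eta$, hypotheses \eqref{fm2b} and \eqref{fm2c} supply at least $N$ values of $\{d_{i}\}$ strictly above $(k+\eta)/N$ inside the middle block and at least $N$ strictly below it, and combining this with the elementary bounds $\sum_{i<m_{0}}d_{i}\in[(m_{0}-1)(1-\ve),m_{0}-1]$ and $\sum_{i>m_{1}}d_{i}\in[0,(M-m_{1})\ve]$ will pin down $K-k$ between $m_{0}-1$ and $m_{1}-N$ via a careful counting argument. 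This counting is the technical heart of the proof.
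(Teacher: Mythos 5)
Your reduction of the left-hand set in \eqref{fm2} to the constraints
\[
\sum_{i=1}^{j}\lambda_{i}\ge s_{j},\quad j=1,\ldots,N-1,\qquad \sum_{i=1}^{N}\lambda_{i}=k+\eta,
\]
with $s_{j}=\sum_{i=1}^{K-k+j}d_{i}-(K-k)$, is correct and matches the starting point of the paper's argument. However, the next step contains a genuine gap: the candidate $\mu_{j}=d_{K-k+j}$ does not satisfy the required trace condition. Indeed, $\sum_{j=1}^{N}\mu_{j}=s_{N}=K+\eta-(K-k)-\sum_{i>K-k+N}d_{i}=k+\eta-\sum_{i>K-k+N}d_{i}$, which equals $k+\eta$ only when the tail $\sum_{i>K-k+N}d_{i}$ vanishes. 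In general this deficit is strictly positive and can be arbitrarily large; it is not a boundary degeneracy of the type $\mu_{1}=1$ or $\mu_{N}=0$, and it is not cured by a small perturbation. The minimal element you want is the minimal point of the upper set $\{\sum_{i=1}^{j}\lambda_{i}\ge s_{j}\}$ intersected with the hyperplane $\sum\lambda_{i}=k+\eta$ and the open cube $(0,1)^{N}$, and the cube and hyperplane constraints genuinely reshape the minimum away from the raw middle slice.

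The paper resolves this by introducing the auxiliary functions $g(x)=\sum_{i>K-k,\,d_{i}>x}(d_{i}-x)$ and $h(x)=\sum_{i\le K-k+N,\,d_{i}<x}(x-d_{i})$ and splitting into three cases. When the middle slice is sufficiently spread out (Cases 1 and 2), the constraints beyond the trace condition are already implied by $\boldsymbol\lambda\in(0,1)^{N}$ with prescribed sum, and $\boldsymbol\mu$ is the constant vector $((k+\eta)/N,\ldots,(k+\eta)/N)$. Otherwise (Case 3), levels $a>(k+\eta)/N>b$ are determined by the balance conditions $g(a)=\sum_{i\le K-k}(1-d_{i})$ and $h(b)=\sum_{i>K-k+N}d_{i}$, and $\boldsymbol\mu$ is obtained by flattening the top $N_{a}$ entries of the middle slice to $a$ and the bottom $N_{b}$ entries to $b$ while keeping the interior fixed; this is exactly what redistributes the missing mass $k+\eta-s_{N}$ and the excess from the head, lands $\boldsymbol\mu$ in $(0,1)^{N}$ with the correct sum, and still yields $\mathbf d\prec\overline{\boldsymbol\mu}$. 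Your proposal does not contain this key construction, so the first assertion of the lemma is not established. Consequently the second part (where you would need to show $[K-k+1,K-k+N]\subset[m_{0},m_{1}]$ and transfer the same $\boldsymbol\mu$ to $\{d_{i}'\}$) rests on an incorrect $\boldsymbol\mu$; you have correctly identified the range-containment argument as the remaining obstacle, but it must be run with the correct $(a,b)$-flattened $\boldsymbol\mu$, and the positivity of $g((k+\eta)/N)$ and $h((k+\eta)/N)$ together with \eqref{fm2b}--\eqref{fm2c} is what makes the counting close.
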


\begin{proof} Given any $\boldsymbol\lambda = (\lambda_{1},\ldots,\lambda_{N})\in(0,1)^{N}$ we define
\[\overline{\boldsymbol{\lambda}} = (\underbrace{1,\ldots,1}_{K-k},\lambda_1,\ldots,\lambda_N,\underbrace{0,\ldots \ldots,0}_{M-N-(K-k)}).\] 
Define the set
\[\Lambda_{N}^{k} = \{ \boldsymbol \la \in (0,1)^N: (d_1,\ldots,d_M) \prec \overline{\boldsymbol{\lambda}} \}.\]
For $\boldsymbol \mu\in(0,1)^{N}$ define the upper set
\[U(\boldsymbol \mu) = \{\boldsymbol \lambda\in(0,1)^{N}: \boldsymbol\mu \prec \boldsymbol \lambda\}.\]
Our first goal is to find $\boldsymbol\mu$ such that \eqref{fm2} holds, that is, $U(\boldsymbol\mu)=\Lambda_{N}^{k}$. In order to do this we define the functions
\[g(x) = \sum_{\genfrac{}{}{0pt}{1}{i> K-k}{d_{i}>x}}(d_{i}-x)\quad\text{and}\quad h(x)=\sum_{\genfrac{}{}{0pt}{1}{i< K-k+N+1}{d_{i}<x}}(x-d_{i}).\]
Fix $s\in\N$ such that 
\begin{equation}\label{fm3.0}
d_{i}> (k+\eta)/N \iff i\leq s.
\end{equation}

{\bf Case 1.} Assume
\begin{equation}
\label{fm3}g((k+\eta)/N)\leq \sum_{i=1}^{K-k}(1-d_{i}).
\end{equation}
In this case we define $\boldsymbol\mu$ by $\mu_{i} = (k+\eta)/N$, $i=1,\ldots,N$. It is clear that $U(\boldsymbol\mu)$ is the set of all $\boldsymbol\lambda\in(0,1)^{N}$ that sum to $k+\eta$. Thus, by the transitivity of $\prec$, it is enough to show that $\mathbf{d}\prec\overline{\boldsymbol{\mu}}$.
Rearranging \eqref{fm3} gives
\begin{equation}\label{fm3.1}\sum_{i=1}^{s}d_{i}\leq K-k+(s-(K-k))\frac{k+\eta}{N} \le \sum_{i=1}^{s}\overline{\mu}_{i}.\end{equation}
Thus, by \eqref{fm3.0} and \eqref{fm3.1} we have
\[
\sum_{i=1}^{n}d_{i}\leq \sum_{i=1}^{n}\overline{\mu}_{i}
\qquad\text{for } n\leq K-k+N.
\]
Finally, for $n>K-k+N$ we have
\[
\sum_{i=1}^{n}d_{i}\leq \sum_{i=1}^{M}d_{i} = \sum_{i=1}^{M}\overline{\mu}_{i} = \sum_{i=1}^{n}\overline{\mu}_{i}.\]
This completes the proof of the first case.

{\bf Case 2.} Assume 
\begin{equation}\label{fm4}h((k+\eta)/N)\leq \sum_{i=K-k+N+1}^{M}d_{i}.\end{equation}
The proof that the same $\boldsymbol\mu$ works is analogous to Case 1.

{\bf Case 3.} Assume that both \eqref{fm3} and \eqref{fm4} fail. In this case we shall describe the procedure of ``moving'' the terms of $\mathbf d$ toward the global minimum element $((k+\eta)/N, \ldots, (k+\eta)/N)$. In the process, the largest $K-k$ terms of $\mathbf d$ become $1$'s creating  momentum \eqref{fm5} for the next elements $d_i$, $K-k<i<s$, to move down toward the global minimum $(k+\eta)/N$ starting with the largest and coalescing into a cohort until the momentum is exhausted. Likewise, the smallest $M-N-(K-k)$ terms of $\mathbf d$ become $0$'s enabling the elements  $d_i$, $s\le i\le N+(K-k)$, to move up toward the global minimum $(k+\eta)/N$. Unlike Case 1 or 2, we shall not reach this global minimum, but instead a minimal element $\boldsymbol \mu$ defined below.

Since both $g$ and $h$ are continuous and strictly monotone on the interval $[d_{K-k+N},d_{K-k+1}]$, there exist unique numbers $a$ and $b$ with $d_{K-k+N}\leq b<(k+\eta)/N<a\leq d_{K-k+1}$ such that
\begin{equation}\label{fm5}g(a) = \sum_{i=1}^{K-k}(1-d_{i})\end{equation}
and
\begin{equation}\label{fm6}h(b) = \sum_{i=K-k+N+1}^{M}d_{i}.\end{equation}
Define the integers
\begin{equation}\label{fmN}N_{a} = |\{i\colon d_{i}>a\text{ and } i>K-k\}|\quad\text{and}\quad N_{b} = |\{i\colon d_{i}<b\text{ and }i<K-k+N+1\}|.\end{equation}
Finally, define $\boldsymbol\mu$ by
\begin{equation}\label{fmmu}\mu_{i} = 
\begin{cases} a & i=1,\ldots,N_{a},
\\ d_{i+K-k} & i=N_{a}+1,\ldots,N-N_{b},
\\ b & i=N-N_{b}+1,\ldots,N.
\end{cases}\end{equation}

First, we wish to show that $\mathbf d\prec\overline{\boldsymbol\mu}$, that is
\begin{equation}\label{fm7}\sum_{i=1}^{m}(\overline{\mu}_{i}-d_{i})\geq 0\end{equation}
for $m=1,\ldots,M$, with equality at $m=M$.
From \eqref{fm5} and the observation that $\overline{\mu}_{i} = d_{i}$ for $i=K-k+N_{a}+1,\ldots,K-k+N-N_{b}$, we see that 
\begin{equation}\label{fm8}\sum_{i=1}^{m}(\overline{\mu}_{i}-d_{i})=0\quad\text{for }m=K-k+N_{a},\ldots,K-k+N-N_{b}.\end{equation}
From \eqref{fm6} we see that
\[\sum_{i=K-k+N-N_{b}+1}^{M}(\overline{\mu}_{i}-d_{i})=0.\]
Putting these together shows the equality in \eqref{fm7} for $m=M$.

Next, note that $\overline{\mu}_{i}-d_{i}\geq 0$ for $i=1,\ldots K-k$ and $\overline{\mu}_{i}-d_{i}\leq 0$ for $i=K-k+1,\ldots,K-k+N_{a}$. Together with \eqref{fm8} this shows that \eqref{fm7} holds for $m=1,\ldots,K-k+N-N_{b}$. For $i=K-k+N-N_{b}+1,\ldots,K-k+N$ we have $\overline{\mu}_{i}-d_{i}\geq 0$ and for $i\geq K-k+N+1$ we have $\overline{\mu}_{i}-d_{i}=-d_{i}\leq 0$. Since we already know that \eqref{fm7} holds for $m=K-k+N-N_{b}$ and $m=M$, these inequalities show that \eqref{fm7} holds for $m\geq K-k+N-N_{b}+1$. This proves $\mathbf d\prec \overline{\boldsymbol\mu}$, i.e., $\boldsymbol\mu\in\Lambda_{N}^{k}$.
If $\boldsymbol\lambda\in U(\boldsymbol\mu)$, then $\overline{\boldsymbol\mu}\prec\overline{\boldsymbol\lambda}$. The transitivity of $\prec$ implies $\mathbf{d}\prec\overline{\boldsymbol\lambda}$, and thus $\boldsymbol\lambda\in\Lambda_{N}^{k}$. Thus, we have shown that $U(\boldsymbol\mu)\subset\Lambda_{N}^{k}$.

To prove the converse inclusion let $\boldsymbol\lambda\in \Lambda_{N}^{k}$. Without loss of generality we can assume $\lambda_{1}\geq\ldots\geq \lambda_{N}$. It remains to show that $\boldsymbol\mu\prec\boldsymbol\lambda$. 
From \eqref{fm5} the fact that $\mathbf{d}\prec\overline{\boldsymbol\lambda}$ we have
\[\sum_{i=K-k+1}^{K-k+N_{a}}(d_{i}-\overline{\mu}_{i}) = g(a) = \sum_{i=1}^{K-k}(1-d_{i}) = \sum_{i=1}^{K-k}(\overline{\lambda}_{i}-d_{i})\geq -\sum_{i=K-k+1}^{K-k+N_{a}}(\overline{\lambda}_{i}-d_{i}).\]
Rearranging gives
\[
\sum_{i=K-k+1}^{K-k+N_{a}}(\overline{\lambda}_{i}-\overline{\mu}_{i}) =\sum_{i=1}^{N_{a}}(\lambda_{i}-\mu_{i}) \geq 0.\]
Since $\mu_{i}=a$ for $i\leq N_{a}$ we deduce that
\begin{equation}\label{fm9}\sum_{i=1}^{m}\lambda_{i}\geq \sum_{i=1}^{m}\mu_{i}\quad\text{for }m=1,\ldots,N_{a}.\end{equation}

Using \eqref{fm6} and the fact that $\mathbf{d}\prec\overline{\boldsymbol\lambda}$ we have
\[
\sum_{i=K-k+N-N_{b}+1}^{K-k+N}(\overline{\mu}_{i}-d_{i}) = h(b) = \sum_{i=K-k+N+1}^{M}d_{i} = \sum_{i=K-k+N+1}^{M}(d_{i}-\overline{\lambda}_{i})\geq -\sum_{i=K-k+N-N_{b}+1}^{K-k+N}(d_{i}-\overline{\lambda}_{i}).\]
Rearranging this and using the fact that $\boldsymbol \mu$ and $\boldsymbol \lambda$ have the same sum yields
\[
\sum_{i=K-k+N-N_{b}+1}^{K-k+N}(\overline{\mu}_{i}-\overline{\lambda}_{i}) =
\sum_{i=N-N_b+1}^{N}(\mu_{i} - \lambda_{i}) = \sum_{i=1}^{N-N_b} (\lambda_i - \mu_i) \geq 0.
\]
Since $\mu_{i}=b$ for $i=N-N_{b}+1,\ldots,N$, we deduce that
\begin{equation}\label{fm10}\sum_{i=1}^{m}\lambda_{i}\geq \sum_{i=1}^{m}\mu_{i}\quad\text{for }m=N-N_{b},\ldots,N.\end{equation}
By \eqref{fm5} we have
\[\sum_{i=1}^{K-k+N_{a}}d_{i} = \sum_{i=1}^{K-k+N_{a}}\overline{\mu}_{i}.\]
Thus, by the fact that  $\overline{\mu}_{i} = d_{i}$ for $i=K-k+N_{a}+1,\ldots,K-k+N-N_{b}$ and $\mathbf d \prec \overline {\boldsymbol\lambda}$ we deduce that
\begin{equation}\label{fm11} \sum_{i=1}^{m}\lambda_{i}\geq \sum_{i=1}^{m}\mu_{i}\quad\text{for }m=N_{a}+1,\ldots,N-N_{b}.\end{equation}
Putting together \eqref{fm9}, \eqref{fm10}, and \eqref{fm11} shows that $\boldsymbol{\mu}\prec\boldsymbol{\lambda}$, i.e. $\boldsymbol\lambda\in U(\boldsymbol\mu)$. Thus, we have $U(\boldsymbol\mu) = \Lambda_{N}^{k}$, which completes the proof of the first part of the lemma.

Next, we assume there exist $m_{0},m_{1},n$ and $\{d_{i}'\}_{i=1}^{M'}$ satisfying \eqref{fm2a}--\eqref{fm3d}. Using \eqref{fm3b}--\eqref{fm3d} we see that
\begin{align*}
\sum_{i=1}^{M'}d_{i}' &  = \sum_{i=1}^{n+m_{0}-1}(d_{i}'-1) + (n+m_{0}-1) + \sum_{i=n+m_{0}}^{n+m_{1}}d'_{i} + \sum_{i=n+m_{1}+1}^{M'}d_{i}'\\
 &  = \sum_{i=1}^{m_{0}-1}(d_{i}-1) + (n+m_{0}-1) + \sum_{i=m_{0}}^{m_{1}}d_{i} + \sum_{i=m_{1}+1}^{M}d_{i} = \sum_{i=1}^{M}d_{i} + n = K+n+\eta.
 \end{align*}
Define the functions $\tilde{g}$ and $\tilde{h}$ by
\[\tilde{g}(x) = \sum_{\genfrac{}{}{0pt}{1}{i>K+n-k}{d_{i}'>x}}\left(d_{i}'-x\right) \quad\text{and}\quad \tilde{h}(x) = \sum_{\genfrac{}{}{0pt}{1}{i<K+n-k+N}{d_{i}'< x}}(x-d_{i}').\]
Then \eqref{fm3b}--\eqref{fm3d} imply that $\tilde{g}(x)=g(x)$ and $\tilde{h}(x)=h(x)$ for $x\in(\eps,1-\eps)$.

If \eqref{fm3} holds, then by \eqref{fm2a}, \eqref{fm3b}, \eqref{fm3a}, \eqref{fm3d}, and \eqref{fm3.1} we have
\[\sum_{i=1}^{s+n}d_{i}' = n+\sum_{i=1}^{s}d_{i} \leq n+K-k + (s-(K-k))\frac{k+\eta}{N}.\]
Rearranging this inequality gives
\[\tilde{g}((k+\eta)/N)\leq \sum_{i=1}^{K+n-k}d_{i}'.\]
Thus, Case 1 applied to $\{d_{i}'\}_{i=1}^{M'}$ shows that \eqref{fm2} holds with $\{d_{i}'\}$ in place of $\{d_{i}\}$ and the same $\boldsymbol \mu=((k+\eta)/N, \ldots, (k+\eta)/N)$.
A similar argument shows that if \eqref{fm4} holds, then
\[\tilde{h}((k+\eta)/N)\leq \sum_{i=K+n-k+N}^{M'}d_{i}'.\]
Thus, Case 2 shows that \eqref{fm2} holds with $\{d_{i}'\}$ in place of $\{d_{i}\}$ and the same $\boldsymbol \mu$.

Finally, assume both \eqref{fm3} and \eqref{fm4} fail. Since $g((k+\eta)/N)$ and $h((k+\eta)/N)$ are positive, we see that the sequence $\{d_{i}\}_{i=K-k+1}^{K-k+N}$ includes $N$ terms, with at least one term larger than $(k+\eta)/N$ and at least one term smaller than $(k+\eta)/N$. Since $\{d_{i}\}_{i=m_{0}}^{m_{1}}$ includes at least $2N$ terms, with $N$ terms larger than $(k+\eta)/N$ and $N$ terms smaller than $(k+\eta)/N$, we must have $m_{0}\leq K-k+1\leq K-k+N\leq m_{1}$.

Since $K-k+1\geq m_{0}$, we have $1-\eps> d_{K-k+1}\geq a$. This yields
\[\tilde{g}(a) = g(a) = \sum_{i=1}^{K-k}(1-d_{i}) = \sum_{i=1}^{K+n-k}(1-d_{i}').\]
Similarly, since $m_{1}\geq K-k+N$ we have $\eps<d_{K-k+N}\leq b$. This yields
\[\tilde{h}(b) = h(b) = \sum_{i=K-k+N+1}^{M}d_{i} = \sum_{i=K+n-k+N+1}^{M'}d_{i}' .\]
Finally, note that
\[N_{a} = |\{i\colon d_{i}'>a\text{ and } i>K+n-k\}|\quad\text{and}\quad N_{b} = |\{i\colon d_{i}'<b\text{ and }i<K+n-k+N+1\}|,\]
where $N_{a}$ and $N_{b}$ are the numbers in \eqref{fmN}. Thus, applying the definition \eqref{fmmu} to $\{d_{i}'\}$ we obtain the same $\boldsymbol\mu$ as we did for $\{d_{i}\}$. Therefore,  Case 3 shows that \eqref{fm2} holds with $\{d_{i}\}_{i=1}^{M}$ replaced by $\{d_{i}'\}_{i=1}^{M'}$. This completes the proof of Lemma \ref{fm}.
\end{proof}

\begin{proof}[Proof of Theorem \ref{La}] By our assumption \eqref{calf} for any $0<\alpha<1$ we have
\[
C(\alpha) = \sum_{d_{i}<\alpha}d_{i}<\infty \quad\text{and}\quad D(\alpha)=\sum_{d_{i}\geq \alpha}(1-d_{i})<\infty.
\]
Let $0\le \eta <1$ be such that
\begin{equation}\label{La5}
C(1/2) - D(1/2) \equiv \eta \mod 1.
\end{equation}
By Lemma \ref{eqv}
\begin{equation}\label{La6}
\Lambda_N(\{d_i\}) \subset \bigcup_{k=0}^{N-1}
\bigg\{\boldsymbol \la \in (0,1)^N: \sum_{i=1}^N \la_i = k+\eta \bigg\}.
\end{equation}
By \eqref{La1} there exists $\ve>0$ such that for all $k=0,\ldots, N-1$ (exclude $k=0$ when $\eta=0$)
we have \eqref{fm2b} and \eqref{fm2c}.
Hence, by Lemma \ref{app} there exists a sequence $\{\tilde d_i\}_{i\in I}$ such that (i)--(v) hold. 

Let $\{\tilde d_i\}_{i=1}^M$ be a nonincreasing subsequence of $\{\tilde d_i\}$ consisting of terms in $(0,1)$. By Lemma \ref{decomp} we have $\Lambda_N(\{\tilde d_i\}_{i\in I})=\Lambda_N(\{\tilde d_i\}_{i=1}^M)$. By \eqref{La5} and Lemma \ref{app} there exists $K \in \N$ such that 
\[
\sum_{i=1}^M \tilde d_i = K +\eta.
\] 
Fix $k=0,\ldots,N-1$ and exclude $k=0$ when $\eta=0$. By Lemma \ref{fm} applied to $\{\tilde d_i\}_{i=1}^M$ there exists $\boldsymbol \mu \in [\ve,1-\ve]^N$ such that \eqref{fm2} holds.  Thus, by the Schur-Horn Theorem \ref{horn}
\begin{equation}\label{La8}
\Lambda_N(\{\tilde d_i\}) \cap \bigg\{\boldsymbol \la \in (0,1)^N: \sum_{i=1}^N \la_i = k+\eta \bigg\}
=\{ \boldsymbol \la \in (0,1)^N: \boldsymbol \mu \prec \boldsymbol \la \}.
\end{equation}

We claim that
\begin{equation}\label{La7}
\Lambda_N(\{d_i\}) \cap \bigg\{\boldsymbol \la \in (0,1)^N: \sum_{i=1}^N \la_i = k+\eta \bigg\} = 
\{ \boldsymbol \la \in (0,1)^N: \boldsymbol \mu \prec \boldsymbol \la \}.
\end{equation}
Indeed, take any $\boldsymbol \la\in (0,1)^N$ such that $\sum_{i=1}^N \la_i = k+\eta$. Let $0<\ve'<\ve$ be such that $\boldsymbol \la \in [\ve',1-\ve']^N$. By Lemma \ref{app} for $\ve'$ there exists a  sequence $\{\tilde d_i'\}_{i\in I}$ such that (i)--(v) hold. Moreover, by \eqref{app2}, $\boldsymbol \la \in \Lambda_N(\{d_i\}) \iff \boldsymbol \la \in \Lambda_N(\{\tilde d'_i\})$.  
Let $\{\tilde d_i'\}_{i=1}^{M'}$ be a nonincreasing subsequence of $\{\tilde d_i'\}$ consisting of terms in $(0,1)$. By properties (ii)--(v)  the assumptions \eqref{fm3b}--\eqref{fm3d} hold for sequences $\{\tilde d_i\}_{i=1}^{M}$ and $\{\tilde d_i'\}_{i=1}^{M'}$, resp. Therefore, by the second part of Lemma \ref{fm} we deduce that $\boldsymbol \la \in \Lambda_N(\{\tilde d'_i\}) \iff \boldsymbol \mu \prec \boldsymbol \la$. This shows \eqref{La7}. Combining \eqref{La6} with \eqref{La7} completes the proof of Theorem \ref{La}.
\end{proof}

\begin{remark}
The assumption \eqref{La1} is not a true limitation though the conclusion of Theorem \ref{La} needs to be modified accordingly. Indeed, suppose that $\{i\in I: d_i \in (0,1)\}$ is infinite. If $\{i\in I:1/2\le d_i <1\}$ is finite, then the operators $E$ in \eqref{LN} have finite rank since we can ignore the terms $d_i=0,1$ in light of Lemma \ref{decomp}. Let $k_0 \in \N_0$ and $0 \le \eta<1$ be such that
\[
\sum_{i\in I, \ d_i<1} d_i =k_0+\eta.
\]
Then, one can show that $\Lambda_N(\{d_i\})$ has exactly $\min(k_0+1,N)$ minimal elements when $\eta>0$ and $\min(k_0,N-1)$ minimal elements when $\eta=0$. This can be deduced in a similar way as Theorem \ref{La}. A similar result holds in the symmetric case when $\{i\in I: 0<d_i < 1/2\}$ is finite. Thus, Theorem \ref{La} can be extended to the case when we assume that $\{i\in I: d_i \in (0,1)\}$ is infinite. Finally, if the latter set is finite, then 
Lemma \ref{fm} and the Schur-Horn Theorem \ref{horn} alone give an extension of Theorem \ref{La}.
\end{remark}

We end the paper by illustrating Theorem \ref{La}.

\begin{ex} Let $\beta\in(0,1)$ and define the sequence $\{d_{i}\}_{i\in\Z\setminus\{0\}}$ by
\[
d_{i} = \begin{cases}1-\beta^{i} & i>0,
\\ 
\beta^{-i} & i<0.\end{cases}
\]
Let $\Lambda_N$ be the set defined by $\eqref{LN}$. Since $\{d_i\}$ is a diagonal of a projection we have $\Lambda_1=\varnothing$. Let $\boldsymbol \mu^k$, where $k=1,\ldots,N-1$, be the minimal elements of $\Lambda_N$ as in Theorem \ref{La}.  One can show that when $N=2$ we have 
\[
\boldsymbol \mu^1=
\begin{cases} (1-\frac{\beta}{1-\beta},\frac{\beta}{1-\beta}) & 0<\beta<1/3, 
\\
(1/2,1/2) &\text{otherwise}.
\end{cases}
\]
When $N=3$ we have $\boldsymbol \mu^2 =\mathbf 1 -\boldsymbol \mu^1$ and
\begin{align*}
\boldsymbol \mu^1 =
\begin{cases} (1-\frac{\beta}{1-\beta},\beta,\frac{\beta^2}{1-\beta}) 
& 0<\beta<\frac{3-\sqrt{5}}2 \approx 0.381966, \\
(\frac12 - \frac{\beta^2}{2(1-\beta)},\frac12 - \frac{\beta^2}{2(1-\beta)}, \frac{\beta^2}{1-\beta}) & \frac{3-\sqrt{5}}2 \le \beta < \frac{-1+\sqrt{13}}6 \approx 0.434259,
\\
(\frac13,\frac13,\frac13) & 
\text{otherwise}.
\end{cases}
\end{align*}
Likewise, when $N=5$ we have $\boldsymbol \mu^3 =\mathbf 1 -\boldsymbol \mu^2$, $\boldsymbol \mu^4 =\mathbf 1 -\boldsymbol \mu^1$, and
\begin{align*}
\boldsymbol \mu^1 &=
\begin{cases} (1-\frac{\beta}{1-\beta},\beta,\beta^2,\beta^3,\frac{\beta^4}{1-\beta}) 
& 0<\beta<\frac{3-\sqrt{5}}2, 
\\
(\frac12 - \frac{\beta^2}{2(1-\beta)},\frac12 - \frac{\beta^2}{2(1-\beta)}, \beta^2,\beta^3, \frac{\beta^4}{1-\beta}) & \frac{3-\sqrt{5}}2 \le \beta < \frac12,
\\
(\frac13 - \frac{\beta^3}{3(1-\beta)},\frac13 - \frac{\beta^3}{3(1-\beta)}, \frac13 - \frac{\beta^3}{3(1-\beta)}, \frac{\beta^3}{2(1-\beta)},\frac{\beta^3}{2(1-\beta)}) & \frac12 \le \beta < \frac{(45+\sqrt{2145})^{2/3}-2 \cdot 15^{1/3}}{15^{2/3} (45+\sqrt{2145})^{1/3}} \approx 0.560286,
\\
(\frac15,\frac15,\frac15,\frac15,\frac15) & \text{otherwise},
\end{cases}
\\
\boldsymbol \mu^2 &=
\begin{cases}
 (1-\frac{\beta^2}{1-\beta},1-\beta,\beta,\beta^2,\frac{\beta^3}{1-\beta}) 
& 0<\beta<\frac12,
\\
(\frac23-\frac{\beta^2}{3(1-\beta)},\frac23-\frac{\beta^2}{3(1-\beta)},\frac23-\frac{\beta^2}{3(1-\beta)},\frac{\beta^2}{2(1-\beta)},\frac{\beta^2}{2(1-\beta)})
&
\frac12 \le \beta< \frac25(-1+\sqrt 6) \approx 0.579796,
\\
(\frac25,\frac25,\frac25,\frac25,\frac25) & \text{otherwise}.
\end{cases}
\end{align*}

\end{ex}

\end{document}